\documentclass[onefignum,onetabnum]{siamart251104}

\usepackage{
	algorithm,
	algpseudocodex,
	amsmath,
	amssymb,
	blkarray,
	mathtools,
	mathrsfs,
	multicol,
	enumitem,
	hyperref,
	dsfont,
	graphicx,
	overpic,
	subcaption
}
\usepackage{color,soul}

\newcommand{\R}{\mathbb R}

\newcommand{\N}{\mathbb N}
\newcommand{\C}{\mathbb C}
\newcommand{\E}{\mathbb E}
\renewcommand{\P}{\mathbb P}

\newcommand{\1}{\mathbf 1}

\newcommand{\cA}{\mathcal A}

\newcommand{\cJ}{\mathcal J}

\newcommand{\cL}{\mathcal L}

\newcommand{\cN}{\mathcal N}

\newcommand{\cS}{\mathcal S}

\newcommand{\cX}{\mathcal X}
\newcommand{\cY}{\mathcal Y}

\newcommand{\sE}{\mathscr E}

\newcommand{\dd}{\mathrm{d}}

\renewcommand{\phi}{\varphi}

\newcommand{\Beta}{\mathrm{B}}
\newcommand{\Fr}{\mathrm{F}}

\newcommand{\MACG}{\mathrm{MACG}}

\newcommand{\Or}{\mathbb{O}}
\newcommand{\St}{\mathbb{V}}

\newcommand{\zFz}{{}_0F_0}
\newcommand{\oFz}{{}_1F_0}

\newcommand{\tFo}{{}_2F_1}
\newcommand{\ttFo}{{}_2\tilde F_1}

\DeclareMathOperator{\diag}{diag}

\DeclareMathOperator{\etr}{etr}

\DeclareMathOperator{\rank}{rank}
\DeclareMathOperator{\range}{range}

\DeclareMathOperator{\tr}{tr}
\DeclareMathOperator{\Unif}{Unif}

\DeclarePairedDelimiter\floor{\lfloor}{\rfloor}

\definecolor{myred}{HTML}{FF453A}
\definecolor{MLorange}{HTML}{D95319}
\definecolor{MLblue}{HTML}{0072BD}
\definecolor{MLgreen}{HTML}{77AC30}

\newsiamremark{remark}{Remark}
\newsiamremark{hypothesis}{Hypothesis}
\crefname{hypothesis}{Hypothesis}{Hypotheses}
\newsiamthm{claim}{Claim}
\newsiamremark{fact}{Fact}
\crefname{fact}{Fact}{Facts}
\newsiamthm{conjecture}{Conjecture}

\crefname{section}{Section}{Sections}
\crefname{subsection}{Subsection}{Subsections}

\headers{Beyond singular value gaps}{C. Wang and A. Townsend}

\title{Beyond singular value gaps in randomized subspace approximation\thanks{Submitted to the editors \today.
\funding{The work of C.~W.\ was supported by the NSF GRFP under grant DGE-2139899, and by the NSF under grant DMS-1929284 while in residence at ICERM in Providence, RI, during the Numerical PDEs: Analysis, Algorithms, and Data Challenges semester program.  The work of A.~T.\ was supported by NSF CAREER (DMS-2045646) and by the Defense Advanced Research Projects Agency (DARPA) through The Right Space (TRS) Disruption Opportunity (DARPA-PA-24-04-07).}}}

\author{Christopher Wang\thanks{Department of Mathematics,  Cornell University, Ithaca, NY (\email{cyw33@cornell.edu}, \email{townsend@cornell.edu}).}
\and Alex Townsend\footnotemark[2]}

\ifpdf
\hypersetup{
	pdftitle={Beyond singular value gaps in randomized subspace approximation},
	pdfauthor={C. Wang and A. Townsend}
}
\fi
\ifpdf
  \DeclareGraphicsExtensions{.eps,.pdf,.png,.jpg}
\else
  \DeclareGraphicsExtensions{.eps}
\fi

\begin{document}
\maketitle

%% ------------------------------------------------------------------
%% ABSTRACT
%% ------------------------------------------------------------------
\begin{abstract}
The success of randomized range finders (RRFs) is typically analyzed via the singular value gaps of a target matrix $A$. In this work, we show that the so-called Frobenius singular value ratio provides a sharper analysis of an RRF's subspace quality under Gaussian sketching.  For any matrix $A$ and any integer $k\ge0$, we derive an explicit,  closed-form expression for the cumulative distribution function of the largest principal angle between the $k$-dominant singular subspace of $A$ and the approximate RRF subspace, expressing it in terms of a hypergeometric function.  We obtain definitive probabilistic guarantees for RRFs that are strictly stronger than those obtained previously.
\end{abstract}

\begin{keywords}
Randomized NLA, subspace approximation,
principal angles,
Gaussian sketching
\end{keywords}

\begin{MSCcodes}
65F55, 15A18, 68W20, 60B20, 62H10, 62R30
\end{MSCcodes}
%% ------------------------------------------------------------------
%% END HEADER
%% ------------------------------------------------------------------

\section{Introduction}

The randomized range finder (RRF) is a central tool in numerical linear algebra for constructing low-dimensional subspace approximations of large matrices (see~\cref{alg:RRF}).  By projecting a matrix onto a randomly generated test subspace,  RRFs enable efficient and scalable algorithms for low-rank approximation,  randomized singular value decomposition (SVD),  and principal component analysis~\cite{Halko2011, Martinsson2011,Woolfe2008}.  Beyond these core tasks, randomized subspace approximation is used in CUR and interpolative decompositions~\cite{Armstrong2025,Drineas2012}, spectral clustering and graph embedding~\cite{Boutsidis2015}, data-oblivious preconditioners such as Blendenpik~\cite{Avron2010}, and eigensolvers based on contour integration and Krylov subspaces~\cite{Beyn2012, Horning2022,Tang2014}. Random sketches can also be used as initial guesses for Riemannian gradient descent on the Grassmannian manifold~\cite{Absil2004, Alimisis2024}. In all of these applications,  the quality of the resulting computation hinges on how accurately the randomized subspace captures a target invariant subspace of the underlying matrix.

\begin{algorithm}
\caption{Randomized range finder (RRF) with oversampling}\label{alg:RRF}
\begin{algorithmic}[1]
\Require Matrix $A\in\R^{m\times n}$, target rank $k\ge0$,  and oversampling $p\ge0$
\State Draw an $n\times(k+p)$ Gaussian matrix with i.i.d.\ entries from $\cN(0,1)$
\State Compute $Y=A\Omega$
%\State Compute the dominant $k$ components of $Y$ via, e.g., column-pivoted QR factorization $Q,R,\Pi=\texttt{cpqr}(Y)$
\State Compute an orthonormal basis $\widetilde U_1$ for the column space of $Y$ %via QR factorization
\State \Return $\widetilde U_1\in\R^{m\times(k+p)}$
\end{algorithmic}
\end{algorithm}

We analyze the RRF's ability to recover the leading $k$-dimensional singular subspace of a matrix $A$ using a $(k+p)$-dimensional Gaussian sketch, in terms of the largest principal angle between the true and the approximate singular subspace (see~\cref{s:background} for a precise definition).  Let $A\in\R^{m\times n}$ be an unknown matrix of rank $0\leq r\leq \min(m,n)$ with an unknown economy-sized SVD $A=U\Sigma V^\top$,  where $U\in\R^{m\times N}$ and $V\in\R^{n\times N}$ have orthonormal columns and $\Sigma=\diag(\sigma_1,\dots,\sigma_N)\in\R^{N\times N}$ with $\sigma_1\ge\dots\ge\sigma_N\ge0$. We can partition the SVD of $A$ into its leading and tail singular values as follows: 
\begin{equation}\label{eq:A-partition}
\begin{blockarray}{ccc}
	& \\
	\begin{block}{c[cc]}
		A = & U_1 & U_2 \\
	\end{block}
\end{blockarray}
\begin{blockarray}{cccc}
	k & N-k & & \\
	\begin{block}{[cc][c]c}
		\Sigma_1 & 0 & V_1^\top & k \\
		0 & \Sigma_2 & V_2^\top & N-k \\
	\end{block}
\end{blockarray}
\end{equation}
for any $0\leq k\le N$. Note that such a partition may be non-unique when $\sigma_k=\sigma_{k+1}$; in this case we make an arbitrary choice for $U_1$ and $V_1$ that yields a valid SVD partition.

Most bounds on such singular subspace approximation errors depend on gaps between the singular values of $A$, which could be thought to be unavoidable due to the classical theorems of Davis--Kahan~\cite{Davis1970} and Wedin~\cite{Wedin1972} or the many estimates that depend on singular value gaps or a related quantity~\cite{Balcan2016,Massey2024,Massey2025,Nakatsukasa2020,Saibaba2019,Xu2020}. However, we find that a more relevant quantity determining the RRF's quality is instead the Frobenius singular value ratio, defined by 
\begin{equation}\label{eq:Fgap}
\xi_k(\Sigma) := \frac{\|\Sigma_1^{-1}\|_\Fr\|\Sigma_2\|_\Fr}{\sqrt{k(N-k)}} = \sqrt{\frac{\left(\sum_{j=1}^k\sigma_j^{-2}\right)\left(\sum_{j=k+1}^N\sigma_j^2\right)}{k(N-k)}}, \quad 1\leq k<N,
\end{equation}
where $\|\cdot\|_\Fr$ denotes the Frobenius norm. This quantity is well-defined whenever $\Sigma_1$ is non-singular, which holds in all practical settings of interest in which the target rank $k$ is smaller than the actual rank of the matrix.

One should view $\xi_k$ as a relaxation of the usual singular value ratio given by 
\begin{equation}\label{eq:gap}
\rho_k := \|\Sigma_1^{-1}\|\|\Sigma_2\| = \frac{\sigma_{k+1}}{\sigma_k}, \qquad 1\leq k<N,
\end{equation}
where $\|\cdot\|$ is the spectral norm; the additional $\sqrt{k(N-k)}$ factor in~\cref{eq:Fgap} is for norm equivalence. Indeed, the quantity $\xi_k$ is the ratio of the squared arithmetic mean of the tail singular values $\Sigma_2$, over the squared harmonic mean of the leading singular values $\Sigma_1$. Therefore, $\xi_k$ is strictly smaller than the singular value ratio $\rho_k$ whenever the diagonal entries of $\Sigma_1$ or $\Sigma_2$ exhibit any kind of decay. As a result, we obtain more informative estimates compared to, for instance,~\cite{Massey2024,Massey2025,Saibaba2019}, which do not provide meaningful bounds when the singular value gaps are small; see~\cref{eq:summary-bd}.  Our estimates help explain the strong empirical performance of the RRF for matrices with decaying singular values that have no large gaps between singular values.

The idea of bounding the approximation error in terms of the Frobenius norm of the tail singular values appeared earlier in~\cite{Dong2024}; our bounds improve on theirs by being simpler and not having an assumption on the amount of oversampling.

\subsection{Contributions}

Our analysis of the RRF can be split into two main parts: the first part is a highly technical analysis of the RRF approximation error through the lens of multivariate statistical analysis and random matrix theory, which we utilize in the second part to prove simple, practical bounds. For practitioners primarily interested in the latter, the important bound is given in~\cref{eq:summary-bd}.

Our first main contribution is the derivation of exact, computable formulas for the cumulative distribution function (CDF)\ of the angular error of the RRF's singular subspace approximation, in terms of the largest principal angle between the column spaces of $U_1$ and $\widetilde U_1$. Exactness here means that we completely characterize the subspace approximation error of the RRF, making our expressions quantitatively stronger than all existing estimates for subspace approximation, given knowledge of the singular values. In particular, our result improves on existing theory by holding in full generality: it requires no assumptions on the matrix $A$ nor on the target rank $k\geq 0$ nor the oversampling parameter $p\ge0$; there are no asymptotic limits; all constants are explicit; and it is subspace-agnostic, meaning that our formulas can be evaluated without knowledge of the actual singular subspaces of $A$ (see~\cref{thm:cdf}).~\cref{fig:cdf-numerics} demonstrates the agreement of our CDF formula with empirical observations.\footnote{Code for computing the CDF can be found at \texttt{\url{https://github.com/chriswang030/RRFErrorCDF}}.}

\begin{figure}
\centering
\begin{minipage}{0.48\textwidth}
\begin{overpic}[width=\textwidth]{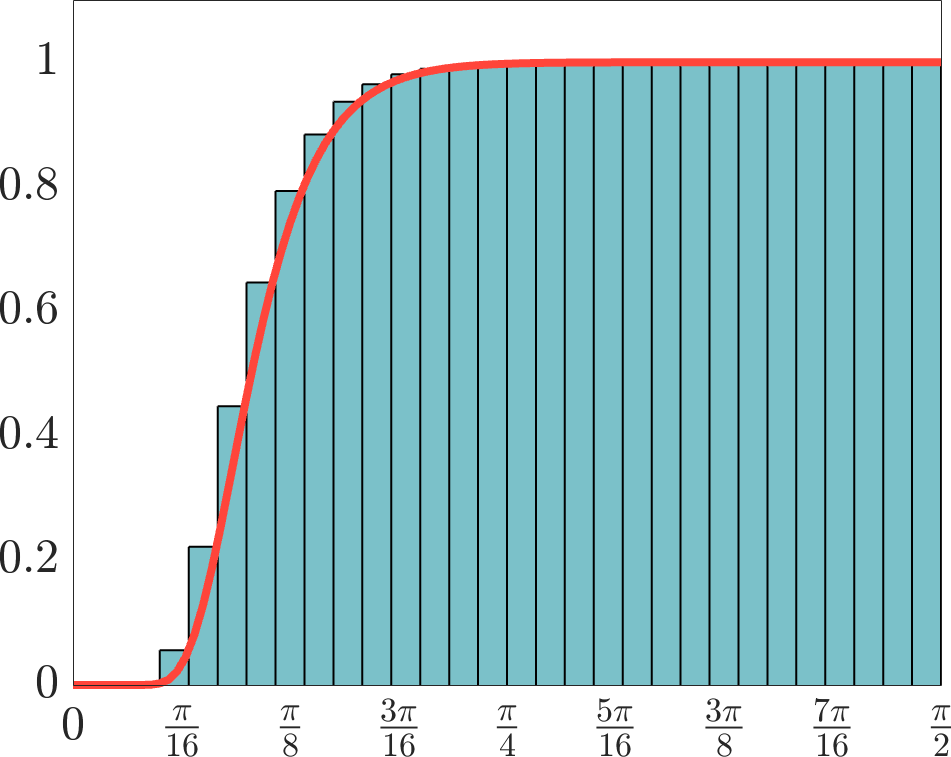}
\put(50,-6){\footnotesize $\theta$}
\put(13,30){\rotatebox{75}{\footnotesize exact CDF~\cref{eq:cdf}}}
\end{overpic}
\vspace*{0mm}
\subcaption{Full-rank: $\sigma_j(A)=j^{-2}$, $1\le j\le100$.\qquad\qquad}
\end{minipage}
\hfill
\begin{minipage}{0.48\textwidth}
\begin{overpic}[width=\textwidth]{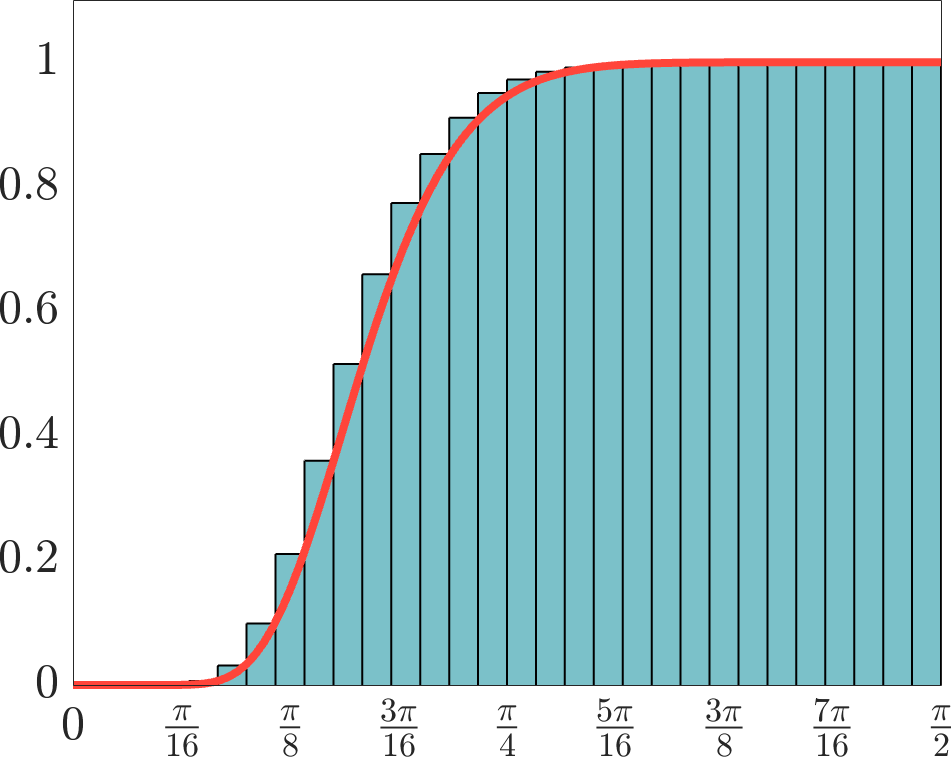}
\put(50,-6){\footnotesize $\theta$}
\put(22,30){\rotatebox{68}{\footnotesize exact CDF~\cref{eq:cdf}}}
\end{overpic}
\vspace*{0mm}
\subcaption{Rank-deficient: $\sigma_j(A)=j^{-1}$, $1\le j\le2k+p-3$, and $\sigma_j(A)=0$, $j>2k+p-3$.}
\end{minipage}
\caption{The histograms show 10,000 samples of $\theta_1$ via the RRF (see~\cref{alg:RRF}) applied to a fixed matrix $A$, which can be singular, with $N=100$, $k=7$, and $p=7$. The solid red curves are computed from our formula for the CDF~\cref{eq:cdf} using the fast algorithm of~\cite{Koev2006} with 1,000 Monte Carlo iterations. Both curves were computed in $<1.3$ seconds in MATLAB 2023b on a standard 2024 Macbook Pro.}
\label{fig:cdf-numerics}
\end{figure}

These exact formulas lead to our second main contribution, which is to rigorously prove that the Frobenius singular value ratio $\xi_k$ is a better indicator of the quality of the RRF approximation compared to the singular value gap $\rho_k$. Our result shows that existing error estimates for the RRF stated in terms of $\rho_k$, such as those in~\cite{Saibaba2019}, can be restated by replacing the singular value ratio with the corresponding Frobenius singular value ratio, up to constant factors. In particular, we show that by running the RRF on a matrix $A\in\R^{m\times n}$ with target rank $k\ge0$ and oversampling $p\ge0$, we have with probability $\ge1-\delta$ that
\begin{equation}\label{eq:summary-bd}
	\sin\theta_1 \le \frac{\xi_k C_{N,k,\delta}}{\sqrt{1+\xi_k^2C_{N,k,\delta}^2}},
\end{equation}
where $N=\min(m,n)$; $\theta_1$ is the largest principal angle between the dominant left $k$-singular subspace of $A$ and the RRF approximation; $\xi_k$ is the Frobenius singular value ratio~\cref{eq:Fgap}; and $C_{N,k,\delta}=\sqrt{k(N-k-1)/(2\delta)}$ (see~\cref{thm:Fgap-bound}). As a corollary, we also obtain a bound in terms of the singular value ratio $\rho_k$ which is strictly stronger than that of \cite[Thm.~6]{Saibaba2019} (see~\cref{thm:gap-bound}).

\subsection{Outline}
The structure of the paper is as follows. In~\cref{s:background}, we give background information. We state and prove our exact formula for the distribution function of the RRF error in~\cref{s:formula}.~\cref{s:applications} establishes our main error estimate~\cref{eq:summary-bd} as well as a version in terms of the singular value ratio $\rho_k$ with improved constants. We also provide numerical experiments comparing our estimates against the true approximation error as well as the state-of-the-art estimates of~\cite{Saibaba2019}. Finally, we discuss a conjectured improvement of our estimates, practical alternatives for estimating upper bounds on the error, and extensions in~\cref{s:discussion}.

\section{Background}\label{s:background}

We briefly provide background information on the mathematical tools used throughout the work.

\subsection{Principal angles}
Principal angles are generalizations of the usual notion of angle between two lines to higher dimensional spaces. Any $k$-dimensional subspace of $\R^n$ can be identified with a matrix $X\in\R^{n\times k}$ whose columns are orthonormal and span that subspace. For a second subspace in $\R^n$, possibly of different dimension, identified with $Y\in\R^{n\times\ell}$, we define the principal angles $\pi/2\ge\theta_1(X,Y)\ge\dots\ge\theta_{\max(k,\ell)}(X,Y)\ge0$ between $X$ and $Y$ by $\cos\theta_j(X,Y) = \sigma_{\max(k,\ell)-j+1}(X^\top Y),$ where $\sigma_1(X^\top Y)\ge\dots\ge\sigma_{\max(k,\ell)}(X^\top Y)\ge0$ are the singular values of $X^\top Y$. We use $\Theta(X,Y)$ to denote the diagonal matrix of principal angles between $X$ and $Y$~\cite{Bhatia1997,Golub2013}.

\subsection{Matrix spaces and probability measures}
All matrices are real unless otherwise stated. For positive integers $k,\ell,r$ with $r\le\min(k,\ell)$, let $\cL_{k,\ell}^+(r)$ be the space of $k\times\ell$ matrices of rank $r$. Let $\cS_k^+(r)\subset\cL_{k,k}^+(r)$ denote the space of $k\times k$ symmetric positive semidefinite matrices of rank $r$. Let $\cL_{k,\ell}^{++}$ and $\cS_k^{++}$ denote the respective subspaces of full-rank matrices. For $\ell\ge k$, let $\St_{\ell,k}$ be the Stiefel manifold of $\ell\times k$ matrices with orthonormal columns, and let $\Or_k$ be the manifold of $k\times k$ orthogonal matrices. 

To derive our formulas, we integrate over several matrix manifolds with respect to measures that we specify here; see~\cite{Chikuse2003, DiazGarcia2007, DiazGarcia2005, Muirhead1982, Uhlig1994} for details. Measures on matrix manifolds are always written in parentheses, e.g., $(\dd X)$. On $\R^{\ell\times k}$, $\cS_k^{++}$, and $\cL_{k,\ell}^{++}$, $(\dd X)$ is the standard Lebesgue measure with respect to the functionally independent entries; for $\R^{\ell\times k}$ and $\cL_{k,\ell}^{++}$, these are all $k\ell$ entries of the matrix, whereas for $\cS_k^{++}$, the functionally independent entries are the $k(k+1)/2$ entries on or above the main diagonal. On $\Or_n$ and $\St_{n,k}$, $(\dd X)$ is the unique probability measure that is both left and right invariant under orthogonal transformations, namely, the transformations
\begin{align*}
H \mapsto QHR\in\Or_n,& \qquad\qquad H,Q,R\in\Or_n, \\
H_1 \mapsto Q_1H_1R_1\in\St_{n,k},& \qquad\qquad H_1\in\St_{n,k},\,  Q_1\in\Or_n,\,R_1\in\Or_k.
\end{align*}
These measures are known as the normalized Haar measures; we refer to them as the uniform probability measures on $\Or_n$ and $\St_{n,k}$. 

On $\cS_k^+(r)$, with $r<k$, $(\dd X)$ is the measure invariant under conjugation by orthogonal matrices, namely $X\mapsto QXQ^\top$ for $Q\in\Or_k$. A natural coordinate system for this measure is via the entries of its eigenvalues and eigenvectors: for each $X\in\cS_k^+(r)$, we may write its economy-sized eigendecomposition $X=H_1DH_1^\top$, where the columns of $H_1\in\St_{k,r}$ are the eigenvectors, and $D$ is an $r\times r$ diagonal matrix of eigenvalues. Then $X$ is uniquely characterized by $H_1$ and $D$, up to a choice of signs for each eigenvector. An explicit representation of the measure in terms of these coordinates, normalized to be a probability measure, is given in~\cite[Thm.~2]{Uhlig1994}. A final subtlety is that this measure is not defined for matrices with repeat eigenvalues,  which poses no issue in our analysis since such matrices comprise of a set of measure zero.

\subsection{Zonal polynomials and hypergeometric functions of matrix argument}\label{ss:bg-matrix}

Extremely important in our proofs are the hypergeometric functions of matrix argument~\cite{Muirhead1982,NIST}. An integer partition $\kappa\,\vdash\ell$ for some integer $\ell\ge0$ is a decreasing sequence of non-negative integers whose sum is $\ell$, and $|\kappa|:=\ell$ is called the weight of $\kappa$. We write $\sum_\kappa$ to denote the sum over all integer partitions. 

For a square matrix $X$, $C_\kappa(X)$ denotes the zonal polynomial of the eigenvalues of $X$ indexed by a partition $\kappa$, and is homogeneous, symmetric, and of degree $|\kappa|$. We normalize according to the convention $\sum_{\kappa\,\vdash\ell} C_\kappa(X) = (\tr X)^\ell$ for each integer $\ell\ge0$. For integers $m,n\ge0$ and parameters $a_1,\dots,a_m,b_1,\dots,b_n\in\C$, we denote by ${}_mF_n(a_1,\dots,a_m;b_1,\dots,b_n;\,\cdot\,)$ the hypergeometric functions of either one or two matrix arguments, depending on context. For $k\times k$ matrices $X,Y$, the hypergeometric functions can be expressed as series of zonal polynomials:
\begin{equation}\label{eq:pFq}
{}_mF_n(a_1,\dots,a_m;b_1,\dots,b_n;X,Y) = \sum_\kappa \frac{(a_1)_\kappa\cdots(a_m)_\kappa}{|\kappa|!(b_1)_\kappa\cdots(b_n)_\kappa}\frac{C_\kappa(X)C_\kappa(Y)}{C_\kappa(I_k)}.
\end{equation}
Here, $(a)_\kappa$ is the partitional shifted factorial, defined by $(a)_\kappa = \prod_{i=1}^\ell \left(a - \tfrac12(i-1)\right)_{\kappa_i}$ with respect to a partition $\kappa=(\kappa_1,\dots,\kappa_\ell)$, where $(a)_{\kappa_i} = a(a+1)\cdots(a+\kappa_i-1)$ is the rising factorial. When only a single matrix argument $X$ appears, the hypergeometric function is defined as in~\cref{eq:pFq} with $Y=I_k$. Our hypergeometric functions may also take two matrix arguments of different sizes~\cite{Shimizu2021}, in which case a subscript such as ${}_mF_n^{(k)}$ denotes the size of the identity matrix appearing in the denominator of~\cref{eq:pFq}. 

Hypergeometric functions of matrix argument may be ill-defined when any of the parameters in the denominator are negative integers or half-integers, e.g., if $-2c\in\N$ for $\tFo(a,b;c;X)$. However, if it also happens in such cases that $a=c$, then $\lim_{h\to0}\tFo(a,b;a+h;X)$ is well-defined, and we denote the limit by $\ttFo(a,b;a;X)$. We introduce this non-standard notation for ease of presentation.

Lastly, for $a,b\in\C$,  two quantities related to hypergeometric functions are the multivariate gamma and beta functions $\Gamma_k(a)$ and $\Beta_k(a,b)$~\cite[Ch.~35.3]{NIST}.

\section{Distribution function of the RRF approximation error}\label{s:formula}

In this section, we derive the cumulative distribution function (CDF)\ of the largest principal angle between the exact dominant singular subspace,  $U_1$,  and the one computed by the RRF, $\widetilde U_1$. Afterward, we provide heuristics to help make sense of the formula as well as numerical experiments verifying its correctness.

\begin{theorem}\label{thm:cdf}
Run the RRF (see~\cref{alg:RRF}) on a matrix $A\in\R^{m\times n}$ with $\rank(A)=r$ and $k,p\ge0$. Under partition~\cref{eq:A-partition}, let $\theta_1$ be the largest principal angle between the true $k$-dominant left singular subspace and the RRF approximation. Then:
\begin{enumerate}[leftmargin=8mm]
\item If $k+p\ge r$, then $\theta_1=0$ almost surely;
\item If $k+p<r$, then for $0\le\theta\le\frac\pi2$, we have
\begin{equation}\label{eq:cdf}
	\Phi_{\Sigma,k,p}(\theta) := \P(\theta_1<\theta) = \E\left[|S_\theta|^{\frac{N-k-p}2} 
	\ttFo(\tfrac{-p+1}2, \tfrac{N-k-p}2; \tfrac{-p+1}2; I-S_\theta) \right],
\end{equation}
where $|\cdot|$ denotes the determinant and $N=\min(m,n)$,  and 
\begin{equation}\label{eq:S}
	S_\theta \equiv S_\theta(\Sigma) := (I_k+\cot^2(\theta)\Sigma_1^{-2}(P_\perp Q_1^\top(H_1^\top\Sigma_2^2H_1)^\dagger Q_1P_\perp)^\dagger)^{-1}
\end{equation}
is the $k\times k$ random matrix in which $\dagger$ denotes the Moore--Penrose pseudoinverse, $Q_1\sim\Unif(\St_{k+p,k})$ and $H_1\sim\Unif(\St_{N-k,k+p})$ are independent random matrices, and $P_\perp\in\R^{k\times k}$ is the orthogonal projection onto the orthogonal complement of $Q_1^\top\mathrm{Null}(H_1^\top\Sigma_2^2H_1)$. In particular, when $r\ge 2k+p$, $P_\perp=I_k$.
\end{enumerate}
\end{theorem}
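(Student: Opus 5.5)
By rotation invariance of the Gaussian, we may replace $A$ by $\Sigma$ (padded with zeros) and $\Omega$ by $V^\top\Omega$. So $Y=U\Sigma G$, with $G\in\R^{N\times(k+p)}$ Gaussian, split as $G_1=V_1^\top\Omega\in\R^{k\times(k+p)}$ and $G_2=V_2^\top\Omega\in\R^{(N-k)\times(k+p)}$, which are independent. We then have $U^\top Y=\begin{bmatrix}\Sigma_1G_1\\ \Sigma_2G_2\end{bmatrix}$.

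Part 1 is quick. If $k+p\ge r$, then $\Sigma_2G_2$ has rank at most $r-k$, and generically $\range(Y)=\range(A)\supseteq\range(U_1)$. Hence $\theta_1=0$ almost surely.

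For part 2, the standard principal-angle computation gives the event $\{\theta_1<\theta\}$ as
\[
\{\,\Sigma_1 G_1(G_1^\top\Sigma_1^2G_1+G_2^\top\Sigma_2^2G_2)^{-1}G_1^\top\Sigma_1\succ\cos^2\theta\, I_k\,\}.
\]
We simplify this with the projection $\Pi$ of $Y$ onto $\range(U_1)$. Writing $M=G_2^\top\Sigma_2^2G_2$, which is rank-deficient when $r-k<k+p$, the condition becomes $\cot^2(\theta)\,\Sigma_1^{-2}\succ(G_1 M^\dagger G_1^\top)^{-1}$, after handling the null directions of $M$. These null directions are exactly what produce $P_\perp$. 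Next we decompose $G_1^\top=Q_1T$ with $Q_1\in\St_{k+p,k}$ uniform and $T$ independent (Bartlett or polar decomposition), and $G_2=H_1 D R$ with $H_1\sim\Unif(\St_{N-k,k+p})$. Using invariance to absorb the right orthogonal factors, we arrive at a condition of the form $TT^\top\succ\cot^{-2}\dots$, namely
\[
G_1G_1^\top\ \succ\ \tan^2(\theta)\,\Sigma_1\,(P_\perp Q_1^\top(H_1^\top\Sigma_2^2H_1)^\dagger Q_1P_\perp)^{\dagger}\Sigma_1 .
\]
We then condition on $(Q_1,H_1)$. The remaining randomness is a Wishart-type or matrix-variate beta quantity with $k+p$ degrees of freedom. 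The corresponding probability $\P(W\succ\Lambda)$, or equivalently the CDF of the largest eigenvalue of a matrix beta variable $B\prec S_\theta$ after the change of variables, is classical: Muirhead Thm.~9.7.x / Constantine give it as $|S|^{a}\,\tFo(\cdot,\cdot;\cdot;I-S)$ with parameters $a=(N-k-p)/2$ and $(-p+1)/2$. Writing $S_\theta=(I+\cot^2\theta\,\Sigma_1^{-2}X^\dagger)^{-1}$ identifies the argument. Taking expectation over $Q_1,H_1$ then gives~\cref{eq:cdf}. When $r\ge 2k+p$, $M$ restricted to $\range(Q_1)$ is generically invertible, so $P_\perp=I_k$.

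The main obstacle is getting the exact matrix-beta structure right. The natural quantity $G_1(G_1^\top\Sigma_1^2G_1+M)^{-1}G_1^\top$ does not immediately factor into a beta matrix independent of $Q_1,H_1$. My first attempt would be to integrate $G_1$ out directly in polar coordinates $G_1^\top=Q_1T$, using the Jacobian $|TT^\top|^{p/2}$, so that the conditional probability is an integral of a Wishart density over $\{TT^\top\succ\Lambda\}$. I would then apply Constantine's formula for $\int_{W\succ\Lambda}$, which yields ${}_1F_1$ or $\tFo$ expressions. I would verify the degenerate parameter $-2c\in\N$ through the limiting $\ttFo$, and use Uhlig's singular Wishart measure for the rank-deficient case.
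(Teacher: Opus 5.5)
Your part~1 and your first display are fine, but there is a genuine gap at the step where you ``use invariance to absorb the right orthogonal factors.'' In the high-rank regime $M=G_2^\top\Sigma_2^2G_2$ is invertible, and Woodbury gives $CC^\top=Z(I+Z)^{-1}$ with $Z=\Sigma_1G_1M^{-1}G_1^\top\Sigma_1$. Hence $\{\theta_1<\theta\}=\{G_1M^{-1}G_1^\top\succ\cot^2(\theta)\,\Sigma_1^{-2}\}$. With $G_2=H_1DR$,
\[
Q_1^\top M^{-1}Q_1=Q_1^\top R^\top D^{-1}(H_1^\top\Sigma_2^2H_1)^{-1}D^{-1}RQ_1 .
\]
Invariance removes $R$, but it does not remove the random singular values $D$ of $G_2$. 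The event is therefore not of the form ``$G_1G_1^\top$ exceeds a function of $(Q_1,H_1)$.''

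Already for $k=1$, $p=0$, $\Sigma=I_N$, your reduction reads $g^2>\tan^2\theta$ with $g\sim\cN(0,1)$. This is independent of $N$, and its probability is even decreasing in $\theta$. The true event is $g^2>\cot^2(\theta)\,\|G_2\|^2$ with $\|G_2\|^2\sim\chi^2_{N-1}$. The discarded $D$ is exactly where the parameter $\tfrac{N-k-p}2$, the factor $|S_\theta|^{(N-k-p)/2}$ and the $\tFo$ come from. Your thresholds also have $\theta$ and $\Sigma_1$ inverted: one needs $\cot^2(\theta)\Sigma_1^{-1}(\cdot)\Sigma_1^{-1}$, not $\tan^2(\theta)\Sigma_1(\cdot)\Sigma_1$.

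So conditioning on $(Q_1,H_1)$ does not leave a Wishart exceedance with $k+p$ degrees of freedom. Your fallback, Constantine's formula applied conditionally on $G_2$, only postpones the real work, namely integrating over the non-Wishart matrix $M$. In the paper that integration is the bulk of the proof: the joint density of the singular and nonsingular quadratic forms via \cref{lem:quadratic-form} and \cite{Hayakawa1966}, integrating out $U=X+Y$, analytic continuation, Pfaff's transformation, and the MACG-to-uniform conversion. Nothing in your proposal replaces it.

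Your conditioning idea can be rescued by moving the radial part of $G_2$ onto $G_1$ rather than onto the threshold.
Take the polar decomposition $G_2=H_1W_2^{1/2}$, where $W_2=G_2^\top G_2$ is Wishart with $N-k$ degrees of freedom and $H_1\sim\Unif(\St_{N-k,k+p})$ is independent of $(W_2,G_1)$. Put $F=G_1W_2^{-1/2}$, so that $G_1M^{-1}G_1^\top=F(H_1^\top\Sigma_2^2H_1)^{-1}F^\top$.
Then $F$ is independent of $H_1$ and right-orthogonally invariant, so $F=V^{1/2}Q_1^\top$ with $Q_1\sim\Unif(\St_{k+p,k})$ independent of $V=G_1W_2^{-1}G_1^\top$. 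Since $(P^\top W_2^{-1}P)^{-1}$ is Wishart with $N-k-p$ degrees of freedom for any fixed $P\in\St_{k+p,k}$, it follows that $V\sim\beta_k^{\mathrm{II}}(\tfrac{k+p}2,\tfrac{N-k-p}2)$.
Now $K=Q_1^\top(H_1^\top\Sigma_2^2H_1)^{-1}Q_1$ is independent of $V$, and its law is invariant under $K\mapsto O^\top KO$. You may therefore replace $V^{1/2}\Sigma_1^2V^{1/2}$ by $\Sigma_1V\Sigma_1$ and get $\P(\theta_1<\theta)=\E_{Q_1,H_1}\bigl[\P_V\bigl(V\succ\cot^2(\theta)\,\Sigma_1^{-1}K^{-1}\Sigma_1^{-1}\bigr)\bigr]$.
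The beta type I CDF of $(I+V)^{-1}$ then yields \cref{eq:tilde-cdf}, and the reflection formula gives \cref{eq:cdf}. This is the mixture representation \cref{eq:beta-rep} that the paper only verifies after the fact, and it would give a much shorter high-rank proof than the paper's density computation.

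Finally, the rank-deficient regime is not settled by ``handling the null directions of $M$.'' There $M^{-1}$ does not exist. The paper needs a separate limiting argument ($\Sigma_2^\epsilon\to\Sigma_2^0$, with convergence of the eigenvalues of $\widetilde S_\theta$ to those of $S_\theta$) to reach the $P_\perp$ formula.
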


Our proof of~\cref{thm:cdf} is in the spirit of~\cite{Absil2006} and contains their result as a special case. The main idea is that the vector of cosines of the principal angles between the left singular subspaces of a given matrix $A$ versus a Gaussian sketch $A\Omega$, can be represented stochastically as the eigenvalues of a certain  random matrix closely resembling a so-called beta random matrix. Beta random matrices can be represented as a ratio $(W_1+W_2)^{-1/2}W_1(W_1+W_2)^{1/2}$, where the square root denotes the unique symmetric positive semidefinite square root and $W_1,W_2$ are Wishart random matrices, which themselves have the representation $W_j=\Omega_j^\top\Omega_j$ for tall-skinny Gaussian matrices $\Omega_j$. We require two generalizations: first, the Wishart matrices are replaced by matrix quadratic forms, and second, they are allowed to be singular. In other words, we are interested in the eigenvalues of a matrix $(Z_1+Z_2)^{-1/2}Z_1(Z_1+Z_2)^{1/2}$ where $Z_j = \Omega_j^\top\Sigma_j\Omega_j$ for some symmetric positive semidefinite matrices $\Sigma_j$. After deriving the distribution of this generalized beta random matrix, we obtain the largest principal angle of interest by extracting its smallest non-zero eigenvalue. We welcome most readers to jump the following technical proof and go straight to~\cref{s:applications}.

We first establish a few tools to help deal with probability distributions on spaces of singular random matrices, which in general form a measure zero subset of more familiar matrix spaces. In what follows, for measurable spaces $\cX,\cY$, the pushforward of a measure $\mu$ on $\cX$ with respect to a measurable map $f:\cX\to\cY$ is defined as the measure $f_*\mu$ on $\cY$ given by $(f_*\mu)(K) = \mu(f^{-1}(K))$, for all measurable sets $K\subset\cY$. We begin with a lemma generalizing the so-called Wishart integral to the singular setting~\cite{Hsu1940}.

\begin{lemma}\label{lem:wishart-integral}
Fix positive integers $k<\ell$. Let $\phi$ be a density function with respect to the Lebesgue measure on $\cL_{k,\ell}^{++}$ such that $\phi(X)$ only depends on $X^\top X$, i.e., $\phi(X)\equiv\phi(X^\top X)$. Let $s:\cL_{k,\ell}^{++}\to\cS_\ell^+(k)$ be the map $s(X)=X^\top X$. Then the pushforward density $s_*\phi$ on $\cS_\ell^+(k)$ is
\begin{equation}\label{eq:wishart-integral}
	(s_*\phi)(R) = \frac{\pi^{k^2/2}}{\Gamma_k(\frac k2)} |R|_+^{-\frac{\ell-k+1}2}\phi(R), \qquad R\in\cS_\ell^+(k),
\end{equation}
where $|R|_+$ denotes the product of the non-zero eigenvalues of $R$, also known as the pseudodeterminant.
\end{lemma}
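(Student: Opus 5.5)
We prove the lemma by changing coordinates on $\cL_{k,\ell}^{++}$ so that $X^\top X$ becomes an explicit coordinate, and then integrating out the remaining coordinate.

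\textbf{Coordinates on $\cL_{k,\ell}^{++}$.} Write $X\in\cL_{k,\ell}^{++}$ (a $k\times\ell$ matrix of full row rank $k$) via its economy-sized polar-type decomposition. Since $X^\top X=R$ has rank $k$, take the eigendecomposition $R=H_1DH_1^\top$ with $H_1\in\St_{\ell,k}$ and $D$ diagonal positive. Then every such $X$ factors as $X=Q D^{1/2}H_1^\top$ with $Q\in\Or_k$. Equivalently, set $T=D^{1/2}H_1^\top$, so that $X=QT$.

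It is cleaner to use the factorization $X=Q\,(XX^\top)^{1/2}$-free form. We write $X=Q B^{1/2} H_1^\top$ with $B=XX^\top$-like coordinates, or more directly use two steps. First, the Lebesgue measure on $\cL_{k,\ell}^{++}$ decomposes under $X=M H_1^\top$, with $M\in\cL_{k,k}^{++}$ and $H_1\in\St_{\ell,k}$, via the standard Jacobian
\[
(\dd X)\propto |M^\top M|^{\frac{\ell-k}{2}}(\dd M)(\dd H_1).
\]
This is the usual ``Stiefel projection'' Jacobian from \cite{Muirhead1982}, Thm.~2.1.14 type. Second, $M\mapsto M^\top M=W\in\cS_k^{++}$ has the classical Wishart Jacobian
\[
(\dd M)\propto |W|^{-1/2}(\dd W)(\dd Q), \qquad Q\in\Or_k,
\]
with constant $\pi^{k^2/2}/\Gamma_k(\tfrac k2)$ (Hsu). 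Combining the two, $R=X^\top X=H_1WH_1^\top$, and the nonzero eigenvalues of $R$ equal those of $W$, so $|R|_+=|W|$.

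\textbf{Integrating out $Q$.} Since $\phi$ depends only on $R$, we integrate out $Q$ using the normalized Haar measure, which leaves a density in $(W,H_1)$ proportional to
\[
|W|^{\frac{\ell-k}2-\frac12}\phi(H_1WH_1^\top)=|R|_+^{\frac{\ell-k-1}{2}}\phi(R).
\]

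\textbf{Matching the measure on $\cS_\ell^+(k)$.} We then identify the pair $(W,H_1)$, modulo the redundancy $H_1\mapsto H_1O$, $W\mapsto O^\top WO$, with $R\in\cS_\ell^+(k)$. This requires expressing the paper's measure on $\cS_\ell^+(k)$ (Uhlig's, \cite[Thm.~2]{Uhlig1994}) in the $(W,H_1)$ coordinates. Uhlig's density in eigen-coordinates contains a factor $\prod_i \lambda_i^{\ell-k}$ relative to the product of Lebesgue measure on $W$ and Haar measure on the Stiefel factor. Dividing this factor out converts $|R|_+^{\frac{\ell-k-1}2}$ into $|R|_+^{-\frac{\ell-k+1}2}$. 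This yields exactly the exponent in \cref{eq:wishart-integral}, with the constant $\pi^{k^2/2}/\Gamma_k(\frac k2)$ inherited from the Wishart step once the Stiefel and orthogonal Haar measures are normalized.

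\textbf{Main obstacle.} The delicate step is the bookkeeping of normalizations. Uhlig's measure on singular PSD matrices carries the $\lambda^{\ell-k}$ factor, and the Haar measures are normalized to be probability measures rather than carrying volume constants. I would verify the constant by testing on the Gaussian case $\phi(X)\propto\etr(-X^\top X/2)$. There the pushforward is the known singular Wishart density of Uhlig/Srivastava, and matching it fixes both the exponent and the constant.
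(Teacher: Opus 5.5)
Your argument is correct, and it takes a genuinely different route from the paper. The paper factors $X=QR$ with $R=[R_1\ R_2]$ upper trapezoidal, cites the Jacobians of \cite{DiazGarcia2005} for $X\mapsto(Q,R)$ and $R\mapsto A=R^\top R$, and then integrates out $Q$. You instead pass to spectral coordinates $R=H_1WH_1^\top$. In those coordinates the conjugation-invariant measure on $\cS_\ell^+(k)$ is explicit \cite[Thm.~2]{Uhlig1994}, so the exponent comes from the single factor $|L|^{\ell-k}$, and $|R|_+=|W|$ holds exactly.

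Two tightenings are worth making.

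First, the map $(M,H_1)\mapsto MH_1^\top$ is not injective: its fibres are the orbits $(MO,H_1O)$ with $O\in\Or_k$. Your first ``Jacobian'' is therefore only an integral identity justified by Haar invariance. The polar form $X^\top=H_1W^{1/2}$ with $W=XX^\top$ is cleaner. It gives $(\dd X)=\frac{\pi^{k\ell/2}}{\Gamma_k(\ell/2)}|W|^{\frac{\ell-k-1}2}(\dd W)(\dd H_1)$ and $X^\top X=H_1WH_1^\top$ in one step, with no $Q$ left to integrate out.

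Second, the constant needs no Gaussian test, which would in any case presuppose the singular Wishart density in exactly this normalization. Write $(\dd W)=\frac{\pi^{k^2/2}}{\Gamma_k(k/2)}\Delta(L)(\dd L)(\dd O)$ and absorb $O$ into $H_1$ by right invariance. Then compare with $(\dd R)=\frac{\pi^{k\ell/2}}{\Gamma_k(\ell/2)}|L|^{\ell-k}\Delta(L)(\dd L)(\dd H_1)$, the normalization used in the proof of \cref{thm:cdf}. The Stiefel factor cancels and exactly $\pi^{k^2/2}/\Gamma_k(k/2)$ survives, so your claimed constant is right.

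What your route buys is that it works with the invariant measure throughout. The QR route is shorter given the citations, but as written it conflates two measures on $\cS_\ell^+(k)$:
\begin{enumerate}
\item $\prod_i r_{ii}^2=|R_1|^2$ is the leading principal minor $|A_{11}|$. This equals $|A|_+=|R_1R_1^\top+R_2R_2^\top|$ only when $R_2=0$.
\item The cited $(\dd A)=2^k\prod_i r_{ii}^{\ell-i+1}(\dd R)$ is Lebesgue measure on the entries of $(A_{11},A_{12})$, which is not conjugation-invariant.
\end{enumerate}
Already for $k=1$, $\ell=2$, $X=(x_1,x_2)$, the QR computation gives the density $x_1^{-2}\phi$ with respect to $\dd A_{11}\,\dd A_{12}$. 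Your computation gives $(x_1^2+x_2^2)^{-1}\phi$ with respect to Uhlig's measure, which is the formula as stated. So your argument proves the lemma for the measure the paper actually declares in its background section.
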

\begin{proof}
Let $X\in\cL_{k,\ell}^{++}$ be a random matrix with density $\phi$. Let $X=QR$ be its QR factorization with $Q\in\Or_k$ and $R\in\R^{k\times\ell}$ rectangular upper triangular, requiring $R$ to have positive diagonal entries. Also set $A=R^\top R\in\cS_\ell^+(k)$. These factorizations of $X$ and $A$ are unique with Jacobians given by $(\dd X) = \frac{2^k\pi^{k^2/2}}{\Gamma_k(\frac k2)} \prod_{i=1}^k r_{ii}^{k-i} (\dd Q)(\dd R)$ and $(\dd A) = 2^k\prod_{i=1}^k r_{ii}^{\ell-i+1}(\dd R)$, where $r_{ii}$ is the $(i,i)$ entry of $R$~\cite[Thms.~2 and 4]{DiazGarcia2005}. Thus the Jacobian of $X\mapsto(A,Q)$ is given by
\[
	(\dd X) = \frac{\pi^{k^2/2}}{\Gamma_k(\frac k2)} \prod_{i=1}^k r_{ii}^{-(\ell-k+1)/2}(\dd A)(\dd Q) 
	= \frac{\pi^{k^2/2}}{\Gamma_k(\frac k2)} |A|_+^{-(\ell-k+1)/2}(\dd A)(\dd Q).
\]
For any Lebesgue measurable set $K\subset\cS_m^+(k)$, we have by invariance of the integrand with respect to $Q$ that
\begin{multline*}
	\P(X^\top X\in K) \\
	= \int_{\cL_{k,m}^{++}} \1_{\{X^\top X\in K\}}\,\phi(X^\top X)\,(\dd X)
%	&= \frac{\pi^{k^2/2}}{\Gamma_k(\frac k2)} \int_{\Or_k} \int_{\cS_\ell^+(k)} \1_{\{A\in K\}} |A|_+^{-\frac{\ell-k+1}2} \phi(A)\,(\dd A)\,(\dd H) \\
	= \frac{\pi^{k^2/2}}{\Gamma_k(\frac k2)} \int_{A\in K} |A|_+^{-\frac{\ell-k+1}2}
		\phi(A)\,(\dd A),
\end{multline*}
where $\1_\Lambda$ denotes the indicator function of the measurable set $\Lambda$, and the result follows.
\end{proof}

The singular Wishart integral derived above allows us now to derive the density function of singular  quadratic forms of Gaussian random matrices.

\begin{lemma}\label{lem:quadratic-form}
Let $k<\ell$ be positive integers and let $X\in\R^{k\times k}$ be symmetric positive definite. If $\Omega\sim\cN_{k,\ell}(0,1)$ is a $k\times\ell$ random matrix with i.i.d.~standard Gaussian entries, then the density of $\Omega^\top X\Omega$ in $\cS_\ell^+(k)$ is
\begin{equation}\label{eq:quadratic-form}
	\phi(A) = \frac{2^{-k\ell/2}\pi^{-k(\ell-k)/2}}{\Gamma_k(\frac k2)} |X|^{-\frac \ell2} 
	|A|_+^{-\frac{\ell-k+1}2} \zFz^{(k)}(X^{-1}, -\tfrac12 A).
\end{equation}
\end{lemma}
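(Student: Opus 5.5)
The plan is to reduce to \cref{lem:wishart-integral} by rewriting $\Omega^\top X\Omega$ as a Gram matrix $Y^\top Y$. Set $Y = X^{1/2}\Omega\in\R^{k\times\ell}$, where $X^{1/2}$ is the symmetric positive definite square root. Then $\Omega^\top X\Omega = Y^\top Y$, and $Y$ has rank $k$ almost surely, so $Y\in\cL_{k,\ell}^{++}$ with probability one. By the linear change of variables $\Omega = X^{-1/2}Y$, whose Jacobian on $\R^{k\times\ell}$ is $|X|^{-\ell/2}$, the density of $Y$ with respect to Lebesgue measure is
\[
	\psi(Y) = (2\pi)^{-k\ell/2}|X|^{-\frac\ell2}\etr\bigl(-\tfrac12 Y^\top X^{-1}Y\bigr)
	= (2\pi)^{-k\ell/2}|X|^{-\frac\ell2}\etr\bigl(-\tfrac12 X^{-1}YY^\top\bigr).
\]

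The difficulty is that $\psi(Y)$ depends on $YY^\top$, not on $Y^\top Y$, so \cref{lem:wishart-integral} cannot be applied to $\psi$ directly. I would fix this by averaging over the left orthogonal action $Y\mapsto HY$ with $H\in\Or_k$. This action leaves $Y^\top Y$ unchanged and preserves Lebesgue measure. The pushforward of $\psi$ under $s$ therefore equals the pushforward of the symmetrized density
\[
	\bar\psi(Y) = \int_{\Or_k}\psi(HY)\,(\dd H).
\]
By the standard zonal integral $\int_{\Or_k}\etr(B H C H^\top)\,(\dd H) = \zFz(B,C)$ for $k\times k$ matrices $B,C$, this gives $\bar\psi(Y) = (2\pi)^{-k\ell/2}|X|^{-\ell/2}\zFz(X^{-1},-\tfrac12 YY^\top)$.

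Next I would express $\bar\psi$ as a function of $R=Y^\top Y$. The nonzero eigenvalues of $YY^\top$ and $Y^\top Y$ coincide, and zonal polynomials depend only on eigenvalues, with extra zero eigenvalues contributing nothing. Hence $C_\kappa(YY^\top)=C_\kappa(Y^\top Y)$, so $\zFz(X^{-1},-\tfrac12 YY^\top) = \zFz^{(k)}(X^{-1},-\tfrac12 R)$, where the superscript records that the normalization uses $C_\kappa(I_k)$. This is exactly the two-argument, mixed-size convention fixed in \cref{ss:bg-matrix}. Thus $\bar\psi$ depends only on $Y^\top Y$, and \cref{lem:wishart-integral} applies.

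Applying it gives the density
\[
	\frac{\pi^{k^2/2}}{\Gamma_k(\frac k2)}(2\pi)^{-k\ell/2}|X|^{-\frac\ell2}|R|_+^{-\frac{\ell-k+1}2}\zFz^{(k)}(X^{-1},-\tfrac12R).
\]
The constants combine as $\pi^{k^2/2}\pi^{-k\ell/2}2^{-k\ell/2} = 2^{-k\ell/2}\pi^{-k(\ell-k)/2}$, which matches \cref{eq:quadratic-form}.

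I expect the main obstacle to be the symmetrization step. One has to justify that replacing $\psi$ by $\bar\psi$ leaves the pushforward unchanged. This follows by Fubini together with invariance of Lebesgue measure on $\cL_{k,\ell}^{++}$ under $Y\mapsto HY$. One also has to check the normalization convention for $\zFz$ with arguments of different sizes, where the identity appearing in the denominator of \cref{eq:pFq} must be $I_k$, the size of the averaging group.
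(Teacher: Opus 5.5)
Your proposal is correct and follows essentially the same route as the paper: the paper also passes to $Y = Q^\top X^{1/2}\Omega$, averages the resulting Gaussian density over $Q\in\Or_k$ to obtain $\zFz^{(k)}(X^{-1},-\tfrac12 Y^\top Y)$, and then applies the singular Wishart integral, with the constants combining exactly as you computed. Your explicit justifications fill in steps the paper leaves implicit: the symmetrization via invariance of Lebesgue measure under $Y\mapsto HY$ plus Fubini, and the identity $C_\kappa(YY^\top)=C_\kappa(Y^\top Y)$ that lets the $k\times k$ argument be replaced by the $\ell\times\ell$ one.
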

\begin{proof}
By~\cite[Thm.~3.1.1]{Muirhead1982}, $\Omega$ is almost surely full rank and has density $\phi(\Omega) = (2\pi)^{-k\ell/2} \etr(-\tfrac12 \Omega^\top \Omega),$ where $\tr(\cdot)$ denotes the trace and $\etr(\cdot):=\exp(\tr(\cdot))$.
For each $Q\in\Or_k$, set $Y\equiv Y(Q) := Q^\top X^{1/2}\Omega$ with $(\dd Y)=|X|^{\frac \ell2}(\dd \Omega)$ and density $\phi(Y) = (2\pi)^{-k\ell/2} \etr(-\tfrac12 Y^\top Q^\top X^{-1}QY) |X|^{-\frac \ell2}$. Observe that $\Omega^\top X\Omega=Y^\top Y$ for all $Q\in\Or_k$. Then for any Lebesgue measurable set $K\subset\cS_\ell^+(k)$, we have
\begin{align*}
	\P(\Omega^\top X\Omega\in K)&=\! (2\pi)^{-k\ell/2} |X|^{-\frac \ell2} \!\!\int_{\Or_k} \!\!\int_{\cL_{k,\ell}^{++}} \!\!\1_{\{Y^\top Y\in K\}}
		\!\etr(-\tfrac12 Y^\top Q^\top X^{-1}QY) \,(\dd Y)\,(\dd Q) \\
	&= \!(2\pi)^{-k\ell/2} |X|^{-\frac \ell2} \int_{Y^\top Y\in K} \zFz^{(k)}(X^{-1}, -\tfrac12 Y^\top Y)\,(\dd Y).
\end{align*}
The integrand is a density function, up to normalization, on $\cL_{k,\ell}^{++}$ which depends only on $Y^\top Y$, so pushing forward with respect to the map $s(Y)=Y^\top Y$ of~\cref{lem:wishart-integral} yields
\[
	\P(\Omega^\top X\Omega\in K) = \frac{2^{-k\ell/2}\pi^{-k(\ell-k)/2}}{\Gamma_k(\frac k2)} |X|^{-\frac \ell2} \int_{A\in K} \zFz^{(k)}(X^{-1}, -\tfrac12 A)\,(\dd A),
\]
integrating over a subset of $\cS_\ell^+(k)$. The result follows by definition.
\end{proof}

The density~\cref{eq:quadratic-form} is analogous to the density derived by similar methods in~\cite{Hayakawa1966} for non-singular quadratic forms; the presence of the $\pi^{-k(\ell-k)/2}$ factor, as well as the pseudodeterminant, can be explained by the fact that the density is defined on a measure-zero subset of $\cS_\ell^{++}$ of singular matrices.

The next lemma is a computation that will simplify things later. In general, for symmetric matrices $X,Y$, we use the L\"owner ordering, i.e., we write $X>(<)\ Y$ to mean that $X-Y$ is positive (negative) definite. Similarly, $X\ge(\le)\ Y$ means that $X-Y$ is positive (negative) semidefinite.  As shorthand, we sometimes write $X>t$ for some $t\in\R$ to mean $X>tI$.

\begin{lemma}\label{lem:partial-integral}
Fix $W\in\cS_\ell^+$, $a\in\C$ with $\Re(a)>\tfrac{\ell-1}2$, and a partition $\kappa$. Set $\nu=\tfrac{\ell+1}2$. Then
\begin{equation}\label{eq:partial-integral}
	\int_{0<U<W} |U|^{a-\nu} C_\kappa(-U)\,(\dd U) 
		= \frac{(a)_\kappa}{(a+\nu)_\kappa} \Beta_\ell(a,\nu) |W|^a C_\kappa(-W),
\end{equation}
where the integral is over all $U\in\cS^+_\ell$ such that $U<W$.
\end{lemma}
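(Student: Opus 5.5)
This is a matrix-variate analogue of $\int_0^w u^{a-1}(-u)^n\,du=\frac{(-1)^n w^{a+n}}{a+n}$. The plan is to reduce to $W=I$ by a congruence change of variables, then to evaluate the resulting integral over $0<U<I$ through the Laplace-type (Beta) integral for zonal polynomials.

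\textbf{Reduction to $W=I$.} Assume first $W\in\cS_\ell^{++}$; the singular case follows by continuity, or is trivial since then both sides vanish when $\Re(a)>0$. Substitute $U=W^{1/2}VW^{1/2}$. The domain $0<U<W$ maps onto $0<V<I$, and the Jacobian is $(\dd U)=|W|^{\nu}(\dd V)$ because $\nu=\tfrac{\ell+1}2$. We also have $|U|^{a-\nu}=|W|^{a-\nu}|V|^{a-\nu}$. The integral therefore becomes
\[
|W|^{a}\int_{0<V<I}|V|^{a-\nu}\,C_\kappa(-W^{1/2}VW^{1/2})\,(\dd V).
\]

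\textbf{Averaging over the orthogonal group.} The measure $|V|^{a-\nu}(\dd V)$ on $\{0<V<I\}$ is invariant under $V\mapsto HVH^\top$ for $H\in\Or_\ell$. We may therefore replace $V$ by $HVH^\top$ and average over $H$. The mean-value property of zonal polynomials,
\[
\int_{\Or_\ell}C_\kappa(XHYH^\top)\,(\dd H)=\frac{C_\kappa(X)C_\kappa(Y)}{C_\kappa(I)}
\]
(Muirhead, Thm.~7.2.5), turns the integrand into $C_\kappa(-W)\,C_\kappa(V)/C_\kappa(I)$. Here we use homogeneity, $C_\kappa(-X)=(-1)^{|\kappa|}C_\kappa(X)$. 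It remains to show
\[
\int_{0<V<I}|V|^{a-\nu}C_\kappa(V)\,(\dd V)=\frac{(a)_\kappa}{(a+\nu)_\kappa}\,\Beta_\ell(a,\nu)\,C_\kappa(I).
\]

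\textbf{The core identity.} This is the case $b=\nu$ of the matrix Beta integral
\[
\int_{0<V<I}|V|^{a-\nu}|I-V|^{b-\nu}C_\kappa(V)\,(\dd V)=\frac{(a)_\kappa}{(a+b)_\kappa}\,\Beta_\ell(a,b)\,C_\kappa(I)
\]
(Muirhead, Thm.~7.2.10 / Constantine). With $b=\nu$ the factor $|I-V|^{0}=1$ disappears, and the hypothesis $\Re(a)>\tfrac{\ell-1}2$ is exactly the convergence condition; $\Re(b)=\nu>\tfrac{\ell-1}2$ holds automatically. Multiplying the three factors back together gives~\cref{eq:partial-integral}.

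The main obstacle is this Beta integral with a zonal polynomial. If it has to be derived rather than cited, the route is as follows. Write the Gamma-type identity $\int_{S>0}\etr(-S)|S|^{a-\nu}C_\kappa(S)\,(\dd S)=\Gamma_\ell(a)(a)_\kappa C_\kappa(I)$, and the same with $a$ replaced by $b$ and $a+b$. Then multiply two such integrals and change variables $S_1=T^{1/2}VT^{1/2}$, $S_2=T^{1/2}(I-V)T^{1/2}$. The Jacobian of this change is $|T|^{\nu}$, and the $\Or_\ell$-averaging is used once more to separate $C_\kappa(TV)$ into $C_\kappa(T)C_\kappa(V)/C_\kappa(I)$. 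The other steps are routine.
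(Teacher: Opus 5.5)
Your proof is correct, but it takes a different route from the paper's. The paper never normalizes $W$. It treats the left-hand side as the cone convolution of $f_1(U)=|U|^{a-\nu}C_\kappa(-U)$ with $f_2\equiv1$, computes $\cL f_1$ and $\cL f_2$ from Muirhead's Theorems~2.1.11 and~7.2.7, and multiplies them via the convolution theorem. It then recognizes the product as the Laplace transform of $\frac{(a)_\kappa}{(a+\nu)_\kappa}\Beta_\ell(a,\nu)|W|^aC_\kappa(-W)$ and concludes by uniqueness of the matrix Laplace transform. You instead determine the $W$-dependence directly. The congruence $U=W^{1/2}VW^{1/2}$ (Jacobian $|W|^{\nu}$) produces the factor $|W|^a$, and orthogonal averaging splits off $C_\kappa(-W)/C_\kappa(I)$. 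Only a scalar constant is then left, which you read off from Constantine's Beta integral at $b=\nu$, where $|I-V|^{b-\nu}\equiv1$. Your route is more transparent and needs no uniqueness theorem, since it shows the lemma is a degenerate case of a tabulated integral. The paper's route is shorter because it reuses the Laplace-transform identities already needed in the proof of \cref{thm:cdf}. The two are closer than they look: your sketched derivation of the Beta integral, via Gamma integrals and the substitution $S_1=T^{1/2}VT^{1/2}$, $S_2=T^{1/2}(I-V)T^{1/2}$, is the same Gamma-integral computation that the convolution theorem packages. Two small points. First, the averaging step can be skipped if you use the Beta integral in its usual form with $C_\kappa(VY)$ and take $Y=-W$, since $W^{1/2}VW^{1/2}$ is similar to $VW$. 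Second, in your sketch only the $a$-integral should carry $C_\kappa$; the $b$-factor must be the plain $\Gamma_\ell(b)$ (the case $\kappa=(0)$). Otherwise the product $C_\kappa(S_1)C_\kappa(S_2)$ would not reduce to the single $C_\kappa(TV)$ you need.
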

\begin{proof}
Set $f_1(U) = |U|^{a-\nu}C_\kappa(-U)$ and $f_2(U)=1$. Write the integral on the left-hand side of~\cref{eq:partial-integral} as $f(W) = \int_{0<U<W} f_1(U)f_2(W-U)\,(\dd U)$. Let $\cL$ denote the matrix-variate Laplace transform, so that $\cL f_1(Z) = (a)_\kappa\Gamma_\ell(a)|Z|^{-a}C_\kappa(-Z^{-1})$ and $\cL f_2(Z) = \Gamma_\ell(\nu)|Z|^{-\nu}$ by~\cite[Thms.~2.1.11 and 7.2.7]{Muirhead1982}. By the convolution theorem~\cite[Prob.~7.3]{Muirhead1982}, we have $\cL f(Z) = (a)_\kappa\Gamma_\ell(a)\Gamma_\ell(\nu)|Z|^{-(a+\nu)}C_\kappa(-Z^{-1}),$ which is also the Laplace transform of the right-hand side of~\cref{eq:partial-integral} as a function of $W$. We conclude by uniqueness of the Laplace transform.
\end{proof}

We are now ready to prove~\cref{thm:cdf}, which essentially consists of finding the CDF of the smallest non-zero eigenvalue of a generalized beta random matrix defined in terms of singular Gaussian quadratic forms instead of Wishart matrices.

\begin{proof}[Proof of~\cref{thm:cdf}]
The case where $k+p\ge r$ is trivial. For the $k+p<r$ case, we split the proof into the high-rank regime ($r\ge 2k+p$) and the low-rank regime ($k+p<r<2k+p$). We begin with the high-rank regime, so assume that $r\ge 2k+p$.

We partition $A=U\Sigma V^\top$ as in~\cref{eq:A-partition}, and let $\widetilde U_1\in\R^{m\times(k+p)}$ be the output of the RRF. Recall that $\Omega\sim\cN_{n,k+p}(0,1)$ is rotationally invariant, so that $V^\top\Omega\sim\cN_{N,k+p}(0,1)$. Moreover, $\Theta(U_1,\widetilde U_1) = \Theta(U^\top U_1,U^\top \widetilde U_1)$. Therefore, we may assume without loss of generality that our matrix of interest $A$ is simply the $N\times N$ diagonal matrix $\Sigma$ of its non-zero singular values and that $\Omega\sim\cN_{N,k+p}(0,1)$. Let us conformally partition $\Omega = \begin{bmatrix} \Omega_1 \\ \Omega_2 \end{bmatrix}$ so that $\Omega_1\sim\cN_{k,k+p}(0,1)$ and $\Omega_2\sim\cN_{N-k,k+p}(0,1)$ are independent. The column space of $\Sigma\Omega$ is $\widetilde U_1$; an orthonormal basis is given by the columns of
\[
	\widetilde U := \Sigma \Omega(\Omega^\top \Sigma^2\Omega)^{-1/2} = \begin{bmatrix} \Sigma_1\Omega_1 \\ \Sigma_2\Omega_2 \end{bmatrix} (\Omega_1^\top \Sigma_1^2\Omega_1 + \Omega_2^\top \Sigma_2^2\Omega_2)^{-1/2}.
\]
Without loss of generality, an orthonormal basis for $U_1$ is simply $I_{N,k}$, the $N\times k$ matrix with ones along the main diagonal and zeros elsewhere. The cosines of the principal angles between $U_1$ and $\widetilde U_1$ are the nonzero singular values of 
\[C := I_{N,k}^\top \widetilde U = \Sigma_1\Omega_1(\Omega_1^\top \Sigma_1^2\Omega_1 + \Omega_2^\top \Sigma_2^2\Omega_2)^{-1/2}.\] Thus, our task is to derive a joint density function for the eigenvalues of 
\begin{equation}\label{eq:C'C}
C^\top C = (\Omega_1^\top \Sigma_1^2\Omega_1 + \Omega_2^\top \Sigma_2^2\Omega_2)^{-1/2}\Omega_1^\top \Sigma_1^2\Omega_1(\Omega_1^\top \Sigma_1^2\Omega_1 + \Omega_2^\top \Sigma_2^2\Omega_2)^{-1/2}.
\end{equation}
Since the classical beta type I ensemble arises when $X$ and $Y$ are independent Wishart matrices with identity scale,  we find that $C^\top C$ is a generalized beta-type random matrix; if $\Sigma_1,\Sigma_2$ were both identity matrices, then indeed $C^\top C$ would be a beta type I random matrix, in the sense of~\cite[Def.~5.2.1]{Gupta2000}.\footnote{\,This distribution is also known by the Jacobi orthogonal ensemble in the mathematical physics literature~\cite[Ch.~3.6]{Forrester2010}, and by the MANOVA ensemble in the statistics literature~\cite[Ch.~10]{Muirhead1982}.}

We begin by finding density functions for the matrix quadratic forms $\Omega_1^\top\Sigma_1^2\Omega_1$, $\Omega_2^\top\Sigma_2^2\Omega_2$ in the space of $k\times k$ symmetric positive semidefinite matrices $\cS_k^+$. When the quadratic form is full rank,  which is true for $\Omega_2^\top\Sigma_2^2\Omega_2$ almost surely since $\rank(\Sigma_2^2)\ge k+p$ by our high-rank assumption, the density is given by~\cite{Hayakawa1966}. On the other hand, since $\Omega_1^\top\Sigma_1^2\Omega_1$ is singular for $p>0$, its density is given in~\cref{lem:quadratic-form}.

We proceed to derive the density function for $C^\top C$ of~\cref{eq:C'C}. In what follows, $\phi(\cdot)$ always denotes the density of a random quantity determined from context. Set $X=\Omega_1^\top \Sigma_1^2\Omega_1\in\cS_{k+p}^+(k)$ and $Y=\Omega_2^\top \Sigma_2^2\Omega_2\in\cS_{k+p}^{++}$. Their densities are given by~\cref{lem:quadratic-form} and~\cite[Thm.~1]{Hayakawa1966} respectively; by independence, their joint density is
\begin{multline*}
\phi(X,Y) = \frac{2^{-(k+p)N/2}\pi^{-kp/2}}{\Gamma_k(\frac k2)\Gamma_{k+p}(\frac{N-k}2)|\Sigma|^{k+p}} |X|_+^{-\frac{p+1}2} |Y|^{\frac{N-2k-p-1}2} \\
\cdot \zFz^{(k)}(\Sigma_1^{-2}, -\tfrac12 X) \,\zFz^{(N-k)}(\Sigma_2^{-2}, -\tfrac12 Y).
\end{multline*}
We transform $U=X+Y\in\cS_{k+p}^{++}$ and $V=U^{-1/2}XU^{-1/2}\in\cS_{k+p}^+(k)$. The Jacobian of this transformation is $(\dd U)(\dd V) = |U|^{-\frac k2}|V|_+^{\frac{p+1}2}|X|_+^{-\frac{p+1}2} (\dd X)(\dd Y)$ following~\cite[Thm.~2.1]{DiazGarcia2009}. Note $X=U^{1/2}VU^{1/2}$ and $Y=U^{1/2}(I-V)U^{1/2}$, so the joint density of $U$ and $V$ is
\begin{multline}\label{eq:dens-UV}
	\phi(U,V) = \frac{2^{-(k+p)N/2}\pi^{-kp/2}}{\Gamma_k(\frac k2)\Gamma_{k+p}(\frac{N-k}2)
		|\Sigma|^{k+p}}|U|^{\frac{N-k-p-1}2}|V|_+^{-\frac{p+1}2}|I-V|^{\frac{N-2k-p-1}2} \\
	\cdot\,\zFz^{(k)}(\Sigma_1^{-2}, -\tfrac12 UV)\,\zFz^{(N-k)}(\Sigma_2^{-2}, -\tfrac12 U(I-V)),
\end{multline}
where $I$ is the identity matrix. To obtain the density of $V$, we integrate over $U\in\cS_{k+p}^{++}$. For $\|V\|$ sufficiently close to 0, we integrate over the terms involving $U$ in~\cref{eq:dens-UV} by
\begin{align*}
	\int_{U>0}& |U|^{\frac{N-k-p-1}2} \zFz^{(k)}(\Sigma_1^{-2}, -\tfrac12 UV)
		\,\zFz^{(N-k)}(\Sigma_2^{-2}, -\tfrac12 U(I-V))\,(\dd U) \\
	&\ = 2^{(k+p)N/2} \sum_\kappa \frac{C_\kappa(\Sigma_1^{-2})}{|\kappa|! C_\kappa(I_k)}
		\int_{\St_{N-k,k+p}} \int_{U>0} \etr(-U(I-V)H_1^\top \Sigma_2^{-2}H_1) \\
	&\hspace{6cm}\cdot|U|^{\frac{N-k-p-1}2}
		C_\kappa(-UV)\,(\dd U)\,(\dd H_1) \\
%	&= 2^{mN/2} \Gamma_m(\tfrac r2) \sum_\kappa 
%		\frac{(\frac r2)_\kappa C_\kappa(\Sigma_1^{-2})}{|\kappa|! C_\kappa(I_k)}
%		\int_{\St_{r-k,m}} |(I-V)H_1^\top \Sigma_2^{-2}H_1|^{-\frac r2}
%		C_\kappa(-\tfrac{V}{I-V}(H_1^\top \Sigma_2^{-2}H_1)^{-1}) (\dd H_1) \\
	&= 2^{(k+p)N/2} \Gamma_{k+p}(\tfrac N2) |I-V|^{-\frac N2} \int_{\St_{N-k,k+p}} 
		|H_1^\top \Sigma_2^{-2}H_1|^{-\frac N2} \\
	&\hspace{3cm}\,\cdot\oFz^{(k)}(\tfrac N2; \Sigma_1^{-2}, 
		-V(I-V)^{-1}(H_1^\top \Sigma_2^{-2}H_1)^{-1})\,(\dd H_1)
\end{align*}
using~\cite[Cor.~1]{Shimizu2021} and~\cite[Thm.~7.2.7]{Muirhead1982}; interchange of sums and integrals is facilitated by Fubini's theorem due to absolute convergence for $\|V\|\ll1$~\cite[Thm.~4.1]{Gross1989}.\footnote{\,This computation is essentially a generalized Laplace transform, where $\zFz^{(k)}$ plays the role of $\etr$. A slightly different version appears in~\cite[Eq.~(6.1)]{Baker1997}.} Thus the density of $V$ near 0 can be written as
\begin{multline*}
	\phi(V) = \frac{\pi^{-kp/2} \Gamma_{k+p}(\frac N2)}{\Gamma_{k+p}(\frac{N-k}{2})
		\Gamma_k(\frac k2) |\Sigma|^{k+p}} |V|_+^{-\frac{p+1}2} |I-V|^{-\frac{2k+p+1}2} \\
	\quad\cdot\,\int_{\St_{N-k,k+p}} |H_1^\top \Sigma_2^{-2}H_1|^{-\frac N2} \oFz^{(k)}(\tfrac N2; 
		\Sigma_1^{-2}, -V(I-V)^{-1}(H_1^\top \Sigma_2^{-2}H_1)^{-1})\,(\dd H_1).
\end{multline*}
Recall that $V=(X+Y)^{-1/2}X(X+Y)^{-1/2}$, where $X=\Omega_1^\top\Sigma_1^2\Omega_1$ and $Y=\Omega_2^\top\Sigma_2^2\Omega_2$; in other words, we have derived the density function of $V=C^\top C$.

Since the squared cosine of the largest principal angle of interest is the smallest nonzero eigenvalue of $C^\top C$, we proceed now to extract the nonzero eigenvalues of $V$. The compact eigenvalue decomposition $V=Q_1LQ_1^\top $, where $Q_1\in\St_{k+p,k}$ and $L$ is a $k\times k$ diagonal matrix with positive decreasing diagonal entries, has Jacobian $(\dd V)=\pi^{k(k+p)/2}\Gamma_k(\tfrac{k+p}2)^{-1}|L|^p\Delta(L)\,(\dd L)\,(\dd Q_1)$, where $\Delta(L)$ is the Vandermonde determinant of $L$~\cite[Thm.~2]{Uhlig1994}. Conversely, the full eigenvalue decomposition $Q^\top LQ=W$ with $Q\in\Or_k$ has Jacobian $(\dd W)=\pi^{k^2/2}\Gamma_k(\tfrac k2)^{-1}\Delta(L)\,(\dd L)\,(\dd Q)$, according to~\cite[Thm.~3.2.17]{Muirhead1982}. Therefore, $W\in\cS_k^{++}$ is full rank but with the same nonzero eigenvalues as $V$, and for $\|W\|\ll1$ it has density
\begin{equation}\label{eq:dens-W}
\begin{split}
	\phi(W) = \frac{\Gamma_{k+p}(\frac N2) |W|^{\frac{p-1}2}|I-W|^{-\frac{2k+p+1}2}}{\Gamma_k(\frac{k+p}2)\Gamma_{k+p}(\frac{N-k}2)|\Sigma|^{k+p}}
		 \int_{\St_{N-k,k+p}} 
		|H^\top _1\Sigma_2^{-2}H_1|^{-\frac N2} \qquad \\
	\cdot\, \int_{\St_{k+p,k}} \int_{\Or_k} \oFz^{(k)}(\tfrac N2; \Sigma_1^{-2}, -W(I-W)^{-1}
		QQ^\top _1(H^\top _1\Sigma_2^{-2}H_1)^{-1}Q_1Q^\top ) \\
	\cdot\,(\dd Q)\,(\dd Q_1)\,(\dd H_1).
\end{split}
\end{equation}

The next step is to manipulate~\cref{eq:dens-W} into a form valid for all $0<W<1$, not just when $\|W\|$ is sufficiently small, by means of analytic continuation. Observe that the joint density~\cref{eq:dens-UV} of $U,V$, as a function of two possibly complex matrices, is analytic whenever 1 is not an eigenvalue of $V$, so the density of $W$ is also analytic whenever 1 is not an eigenvalue of $W$.  Whenever $\|W\|$ is nonzero and sufficiently small, the hypergeometric function in~\cref{eq:dens-W} is equal by~\cite[Cor.~7.3.5]{Muirhead1982} to the function
\begin{multline*}
	\int_{\Or_k} |I+W(I-W)^{-1}QQ_1^\top (H_1^\top \Sigma_2^{-2}H_1)^{-1}Q_1Q^\top R^\top \Sigma_1^{-2}R|^{-\frac N2}
		\,(\dd R) = \\
	\int_{\Or_k} |W|^{-\frac N2} |I-W|^{\frac N2} |\Sigma_1|^N|Q_1^\top (H_1^\top \Sigma_2^{-2}H_1)^{-1}Q_1|^{-\frac N2} \qquad\qquad \\
	\cdot\,|I + (I-W)W^{-1}
		Q[Q_1^\top (H_1^\top \Sigma_2^{-2}H_1)^{-1}Q_1]^{-1}Q^\top R^\top \Sigma_1^2R|^{-\frac N2}\,(\dd R),
\end{multline*}
which is certainly analytic for all $W\in\cS_k^{++}$. By analytic continuation, we have
\begin{multline*}
	\phi(W) = \frac{\Gamma_{k+p}(\frac N2)|\Sigma_1|^{N-k-p}}{\Gamma_k(\frac{k+p}2)
		\Gamma_{k+p}(\frac{N-k}2)|\Sigma_2|^{k+p}} |W|^{-\frac{N-p+1}2} |I-W|^{\frac{N-2k-p-1}2} \\
		\cdot\,\int_{\St_{N-k,k+p}} \int_{\St_{k+p,k}} \int_{\Or_k} \left(\frac{|(H_1^\top \Sigma_2^{-2}H_1)^{-1}|}{|Q_1^\top (H_1^\top \Sigma_2^{-2}H_1)^{-1}Q_1|}\right)^{\frac N2} \qquad\qquad \\
	\cdot\,\oFz^{(k)}\left(\tfrac N2; \Sigma_1^2, (I-W^{-1})(QQ_1^\top (H_1^\top \Sigma_2^{-2}H_1)^{-1}Q_1Q^\top)^{-1}\right)\,(\dd Q)\,(\dd Q_1)\,(\dd H_1)
\end{multline*}
whenever $0<W<1$ and $\|I-W^{-1}\|$ is sufficiently small. Thus, for $K\in\cS_k^{++}$ with $K<1$ and $\|I-K^{-1}\|$ sufficiently small, we have
\begin{align}\label{eq:dist-W}
	\P&(K<W<1) \nonumber \\
	&= \frac{\Gamma_{k+p}(\frac N2)|\Sigma_1|^{N-k-p}}{\Gamma_k(\frac{k+p}2)
		\Gamma_{k+p}(\frac{N-k}2)|\Sigma_2|^{k+p}} \int_{\St_{N-k,k+p}} \int_{\St_{k+p,k}}
		\left(\frac{|(H_1^\top \Sigma_2^{-2}H_1)^{-1}|}{|Q_1^\top (H_1^\top \Sigma_2^{-2}H_1)^{-1}Q_1|}
		\right)^{\frac N2} \\
	&\quad\cdot\,\sum_\kappa \frac{(\frac N2)_\kappa C_\kappa(\Sigma_1^2)}
		{|\kappa|! C_\kappa(I_k)} \int_{\Or_k} \int_{K<W<1} |W|^{-\frac{N-p+1}2} |I-W|^{\frac{N-2k-p-1}2} \nonumber \\
	&\quad\cdot\,C_\kappa\left((I-W^{-1})\left[QQ_1^\top (H_1^\top \Sigma_2^{-2}H_1)^{-1}Q_1Q^\top\right]^{-1}\right)\,(\dd W)\,(\dd Q)\,(\dd Q_1)\,(\dd H_1). \nonumber
\end{align}
For the inner integral, we transform $Z=(W^{-1}-I)\left[QQ_1^\top (H_1^\top \Sigma_2^{-2}H_1)^{-1}Q_1Q^\top\right]^{-1}$
with Jacobian $(\dd W)=|Q_1^\top (H_1^\top \Sigma_2^{-2}H_1)^{-1}Q_1|^{\frac{N-k-p}2}|I+Z|^{-(k+1)}\,(\dd Z)$ as given in~\cite[Eq.~(1.3.13)]{Gupta2000}. Then applying~\cref{lem:partial-integral} yields
\begin{multline*}
	\int_{K<W<1} |W|^{-\frac{N-p+1}2} |I-W|^{\frac{N-2k-p-1}2} \\
	\cdot\,C_\kappa\left((I-W^{-1})\left[QQ_1^\top (H_1^\top \Sigma_2^{-2}H_1)^{-1}Q_1Q^\top\right]^{-1}\right)\,(\dd W)\,(\dd Q_1)\,(\dd H_1) \\
	= \frac{(\frac{N-k-p}2)_\kappa}{(\frac{N-p+1}2)_\kappa} \Beta_k(\tfrac{N-k-p}2,\tfrac{k+1}2) |K^{-1}-I|^{\frac{N-k-p}2} \hspace{4.5cm} \\
	\cdot\,C_\kappa\left((I-K^{-1}) \left[QQ_1^\top (H_1^\top \Sigma_2^{-2}H_1)^{-1}Q_1Q^\top\right]^{-1}\right),
\end{multline*}
recalling that $\Beta_k(a,b)$ is the multivariate beta function. Substituting into~\cref{eq:dist-W} and using the identity $\Gamma_{k+p}(\tfrac N2)\Gamma_k(\tfrac{N-k-p}2) = \Gamma_k(\tfrac N2)\Gamma_{k+p}(\tfrac{N-k}2)$, we obtain
\begin{multline}\label{eq:pre-pfaff}
	\P(K<W<1) = \frac{\Gamma_k(\frac N2)\Gamma_k(\frac{k+1}2)|\Sigma_1|^{N-k-p}}
		{\Gamma_k(\frac{k+p}2)\Gamma_k(\frac{N-p+1}2)|\Sigma_2|^{k+p}} 
		|K^{-1}-I|^{\frac{N-k-p}2} \\
	\cdot\,\int_{\St_{N-k,k+p}} \int_{\St_{k+p,k}} \int_{\Or_k} \left(\frac{|(H_1^\top \Sigma_2^{-2}H_1)^{-1}|}
		{|Q_1^\top (H_1^\top \Sigma_2^{-2}H_1)^{-1}Q_1|}\right)^{\frac N2} \sum_\kappa 
		\frac{(\tfrac N2)_\kappa (\tfrac{N-k-p}2)_\kappa C_\kappa(\Sigma_1^2)}
		{|\kappa|!(\tfrac{N-p+1}2)_\kappa C_\kappa(I_k)} \\
	\cdot\,C_\kappa\left((I-K^{-1})\left[QQ_1^\top (H_1^\top \Sigma_2^{-2}H_1)^{-1}Q_1Q^\top\right]^{-1}\right) \,(\dd Q)\,(\dd Q_1)\,(\dd H_1),
\end{multline}
which becomes, via the Pfaff transformation~\cite[Thm.~7.4.3]{Muirhead1982},
\begin{multline}\label{eq:K}
	\P(K<W<1) \\
	= \frac{\Gamma_k(\frac N2)\Gamma_k(\frac{k+1}2)}
		{\Gamma_k(\frac{k+p}2)\Gamma_k(\frac{N-p+1}2)} \int_{\St_{N-k,k+p}} \int_{\St_{k+p,k}} 
		\int_{\Or_k} \frac{|H_1^\top \Sigma_2^{-2}H_1|^{-\tfrac N2}}{|\Sigma_2|^{k+p}
		|Q_1^\top (H_1^\top \Sigma_2^{-2}H_1)^{-1}Q_1|^{\tfrac{k+p}2}} \\
	\,\cdot\,\tFo\left(\tfrac{-p+1}2, \tfrac{N-k-p}2; \tfrac{N-p+1}2; (I-K)\Sigma_1^2\left[KQQ_1^\top (H_1^\top \Sigma_2^{-2}H_1)^{-1}Q_1Q^\top \right.\right. \\
	\qquad \left.\left.+\,(I-K)\Sigma_1^2\right]^{-1}\right) \left(\frac{|(I-K)\Sigma_1^2|}{|KQQ_1^\top (H_1^\top \Sigma_2^{-2}H_1)^{-1}Q_1Q^\top +(I-K)\Sigma_1^2|}\right)^{\frac{N-k-p}2} \\
	\cdot\,(\dd Q)\,(\dd Q_1)\,(\dd H_1),
\end{multline}
again for $\|I-K^{-1}\|$ sufficiently small. However, the map $K\mapsto\P(K<W<1)$ is analytic for all $0<K<1$ due to the analyticity of $\phi(W)$, as is the right-hand side of~\cref{eq:K}; by analytic continuation,~\cref{eq:K} is valid for all $0<K<1$.

To simplify some terms, first observe that by the change of variables $Q_1\mapsto Q_1Q^\top$, we may eliminate the dependence of the integrand on $Q$. Secondly, observe that the quantity
\[
	\frac{|H_1^\top \Sigma_2^{-2}H_1|^{-N/2}}{|\Sigma_2|^{k+p}|Q_1^\top (H_1^\top \Sigma_2^{-2}H_1)^{-1}Q_1|^{(k+p)/2}}
\]
is the joint density function of $H_1\sim\MACG(\Sigma_2^2)$, $Q_1\sim\MACG(H_1^\top \Sigma_2^{-2}H_1)$, the latter in the sense of a regular conditional distribution depending on $H_1$, where $\MACG$ denotes the matrix angular central Gaussian distribution on their respective Stiefel manifolds~\cite[Thm.~2.4.2]{Chikuse2003}. These random matrices can be realized as orthonormalized Gaussians as follows. Let $X\sim\cN_{N-k,k+p}(0,1)$ and $Y\sim\cN_{k+p,k}(0,1)$ be independent random matrices. Then $H_1=\Sigma_2X(X^\top \Sigma_2^2X)^{-1/2}$ has the $\MACG(\Sigma_2^2)$ distribution. Set
\[
	S=H_1^\top \Sigma_2^{-2}H_1 = (X^\top \Sigma_2^2X)^{-1/2}X^\top X(X^\top \Sigma_2^2X)^{-1/2}
\]
and recall that nonzero eigenvalues are preserved when matrix products commute, hence $S$ has the same eigenvalues as $X^\top X(X^\top \Sigma_2^2X)^{-1}$. The QR decomposition $X=\widetilde H_1R$, where $\widetilde H_1\sim\Unif(\St_{N-k,k+p})$ by isotonicity of $X$, further implies that $S$ has the same eigenvalues as $(\widetilde H_1^\top \Sigma_2^2\widetilde H_1)^{-1}$ almost surely, as $X$ is almost surely full rank. Applying a similar argument to $Q_1=S^{1/2}Y(Y^\top SY)^{-1/2}$ shows that $\lambda^\downarrow(Q_1^\top (H_1^\top \Sigma_2^{-2}H_1)^{-1}Q_1)$ and $\lambda^\downarrow((\widetilde Q_1^\top (\widetilde H_1^\top \Sigma_2^2\widetilde H_1)^{-1}\widetilde Q_1)^{-1})$ are equal in distribution, where $\widetilde Q_1\sim\Unif(\St_{k+p,k})$ is selected independently of $\widetilde H_1$, and $\lambda^\downarrow$ denotes the tuple of eigenvalues of a matrix set in decreasing order. 

It now follows by letting $K=\cos^2(\theta)I$ that
\begin{multline}\label{eq:tilde-cdf}
	\P(W>\cos^2(\theta)I) = \frac{\Gamma_k(\frac N2)\Gamma_k(\frac{k+1}2)}
		{\Gamma_k(\frac{k+p}2)\Gamma_k(\frac{N-p+1}2)} \int_{\St_{N-k,k+p}} \int_{\St_{k+p,k}} |\widetilde S_\theta|^{\frac{N-k-p}2} \\
	\,\cdot\,\tFo(\tfrac{-p+1}2,\tfrac{N-k-p}2;\tfrac{N-p+1}2;\widetilde S_\theta)
		\,(\dd Q_1)\,(\dd H_1)
\end{multline}
where 
\begin{equation}\label{eq:tilde-S}
\widetilde S_\theta = (I+\cot^2(\theta)\Sigma_1^{-2}(Q_1^\top(H_1^\top\Sigma_2^2H_1)^{-1} Q_1)^{-1})^{-1}.
\end{equation}
Notice that $\widetilde S_\theta$ is equal to the $S_\theta$ of~\cref{eq:S} since $H_1^\top\Sigma_2^2H_1$ and $Q_1^\top(H_1^\top\Sigma_2^2H_1)^{-1}Q_1$ are almost surely invertible, hence the $P_\perp$ factor in $S_\theta$ is the identity.

To conclude, recall that the eigenvalues of $W$ are the squared cosines of the principal angles between $U_1$ and $\widetilde U_1$, so the bound $W>\cos^2(\theta)I$ is equivalent to the bound on the largest principal angle $\theta_1(U_1,\widetilde U_1)<\theta$. All that remains is to apply the reflection formula~\cite[Prop.~3.1]{Richards2024} to the hypergeometric term, which is justified by a limit argument that turns $\tFo$ into $\ttFo$ as defined in~\cref{s:formula}. This completes the proof in the high-rank regime $r\ge 2k+p$.

%\subsection{Proof when $A$ is rank-deficient}

In the low-rank regime $k+p<r<2k+p$, we proceed by a continuity argument. Recall that $\cos\theta_1$ is the largest singular value of $C=I_{N,k}^\top\Sigma\Omega(\Omega^\top\Sigma^2\Omega)^{-1/2}$, which is continuous as a function of the diagonal of $\Sigma$ as long as $\Omega^\top\Sigma^2\Omega$ is invertible. This holds when $\rank(A)>k+p$ (the case $\rank(A)\le k+p$ is trivial as the RRF returns the entire range of $A$ exactly). Therefore, the CDF\ of $\theta_1$ varies continuously with the diagonal of $\Sigma$, so it suffices to show that~\cref{eq:tilde-cdf} converges to~\cref{eq:cdf} when some diagonal entries of $\Sigma$ tend to 0. In particular, since $|S|^{\frac{N-k-p}2}\ttFo(\tfrac{-p+1}2,\tfrac{N-k-p}2;\tfrac{-p+1}2;I-S)$ is continuous in the eigenvalues of $S$, then we only need to show that $\widetilde S_\theta$ of~\cref{eq:tilde-S} converges to the $S_\theta$ of~\cref{eq:S} as some diagonal entries of $\Sigma$ tend to 0. Moreover, observe that when $r\ge 2k+p$, we have $\rank(\Sigma_2^2) \ge k+p$ such that~\cref{eq:tilde-S} is well-defined and equal to~\cref{eq:S}. Thus, the only case we must consider carefully is when $p<s<k+p$ diagonal entries of $\Sigma_2$ tend to 0.

Fix the largest $k+s$ singular values of $A$ to be non-zero; we take the remaining $N-k-s$ singular values to 0. In other words, let $\Sigma_1$ be fixed and let $\Sigma_2^\epsilon=(\sigma_{k+1},\dots,\sigma_{k+s},\epsilon,\dots,\epsilon)$ with $\sigma_{k+1}\ge\dots\ge\sigma_{k+s}>\epsilon>0$. As a limiting case, we set $\Sigma_2^0 = (\sigma_{k+1},\dots,\sigma_{k+s},0,\dots,0)$. By Cauchy's interlacing theorem, the smallest $k+p-s$ eigenvalues of $H_1^\top(\Sigma_2^\epsilon)^2H_1$ must all equal $\epsilon^2$, for any $H_1\in\St_{N-k,k+p}$. On the other hand, if $\alpha_1\ge\dots\ge\alpha_s>0$ are the non-zero eigenvalues of $H_1^\top(\Sigma_2^0)^2H_1$, then the largest $s$ eigenvalues of $H_1^\top(\Sigma_2^\epsilon)^2H_1$ are $\beta_j = \alpha_j+O(\epsilon^2)$ for $j=1,\dots,s$, by Weyl's inequalities~\cite[Thm.~III.2.1]{Bhatia1997}.

 Let $H_1^\top(\Sigma_2^\epsilon)^2H_1$ have the eigendecomposition
 \[
 	H_1^\top(\Sigma_2^\epsilon)^2H_1 = \begin{bmatrix}R_1 & R_2\end{bmatrix}
 	\begin{bmatrix} B & \\ & \epsilon^2 I_{k+p-s}\end{bmatrix}
	\begin{bmatrix} R_1^\top \\ R_2^\top\end{bmatrix},
\]
where $B$ is an $r\times r$ diagonal matrix of $\beta_1\ge\dots\ge\beta_s\ge\epsilon^2$, $R=[R_1\ R_2]\in\Or_{k+p}$, and $R_1\in\St_{k+p,s}$, $R_2\in\St_{k+p,k+p-s}$. In particular, $R_2$ corresponds to the eigenspace associated with the $\epsilon^2$ eigenvalues. Let 
\[
	\sE_\epsilon = \{Q_1\in\St_{k+p,k}:\|\sin\Theta(Q_1,R_2)\|^2\le 1-\epsilon\}
\]
be the set of orthonormal $k$-frames that are bounded away from $R_1$. By~\cite[Thm.~2.4 and Eq.~(2.10)]{Knyazev2010}, we have
\[
	\max_{j=1,\dots,k+p-s}|\epsilon^{-2} - \lambda_j^\downarrow(Q_1^\top(H_1^\top\Sigma_2^2H_1)^{-1}Q_1)| \le \epsilon^{-2} \|\sin\Theta(Q_1,R_2)\|^2 \le \epsilon^{-2}(1-\epsilon)
\]
whenever $Q_1\in\sE_\epsilon$, hence $\lambda_j^\downarrow(Q_1^\top(H_1^\top\Sigma_2^2H_1)^{-1}Q_1) \ge \epsilon^{-1}$ for $j=1,\dots,k+p-s$. On the other hand, set $Q_{11}=R_1^\top Q_1\in\R^{s\times k}$ and $Q_{12}=R_2^\top Q_1\in\R^{(k+p-s)\times k}$, so that
\begin{equation}\label{eq:cdf-cty}
	Q_1^\top(H_1^\top(\Sigma_2^\epsilon)^2H_1)^{-1}Q_1 = Q_{11}^\top B^{-1}Q_{11} + \epsilon^{-2}Q_{12}^\top Q_{12}.
\end{equation}
Recall that $k+p-s<k$ by assumption, so $Q_{12}^\top Q_{12}$ is rank-deficient. Weyl's inequalities imply that the smallest $s-p$ eigenvalues of $Q_1^\top(H_1^\top(\Sigma_2^\epsilon)^2H_1)^{-1}Q_1$ are uniformly bounded as $\epsilon\to0$, since 
\begin{multline*}
\lambda^\downarrow_{k+p-s+j}(Q_1^\top(H_1^\top(\Sigma_2^\epsilon)^2H_1)^{-1}Q_1) \le \lambda_j^\downarrow(Q_{11}^\top B^{-1}Q_{11})
\le \beta_{s-j+1}^{-1} = \alpha_{s-j+1}^{-1} + O(\epsilon^2)
\end{multline*}
for $j=1,\dots,s-p$; they are also uniformly bounded away from 0 by Cauchy's interlacing theorem. 

Therefore, the smallest $k+p-s$ eigenvalues of $(Q_1^\top(H_1^\top(\Sigma_2^\epsilon)^2H_1)^{-1}Q_1)^{-1}$ approach 0 as $\epsilon\to0$ when $Q_1\in\sE_\epsilon$, while the remaining $s-p$ eigenvalues are uniformly bounded from above and thus converge. From~\cref{eq:cdf-cty}, the leading $(k+p-s)$-eigenspace of $Q_1^\top(H_1^\top(\Sigma_2^\epsilon)^2H_1)^{-1}Q_1$ approaches $\range(Q_{12}^\top)$ as $\epsilon\to0$, so the range of $(Q_1^\top(H_1^\top(\Sigma_2^\epsilon)^2H_1)^{-1}Q_1)^{-1}$ approaches that of $(P_\perp Q_1^\top(H_1^\top(\Sigma_2^0)^2H_1)^\dagger Q_1P_\perp)^\dagger$, where $P_\perp$ is the orthogonal projection onto the complement of $\range(Q_{12}^\top)$. The same holds of their eigenvalues, since $R_1$, $R_2$, $Q_{11}$, and $B$ vary continuously with $\epsilon$.

In summary, we have shown that for any $H_1\in\St_{N-k,k+p}$ and $\delta>0$, there is sufficiently small $\epsilon>0$ such that
\begin{multline*}
\P\left(Q_1\in\St_{k+p,k} : \left\|\lambda^\downarrow\Big((Q_1^\top(H_1^\top(\Sigma_2^\epsilon)^2H_1)^{-1}Q_1)^{-1}\Big)\right.\right. \\ \left.\left.-\,\lambda^\downarrow\Big((Q_1^\top(H_1^\top(\Sigma_2^0)^2H_1)^\dagger Q_1)^\dagger\Big)\right\|
	< \delta \right)
\ge \P(\sE_\epsilon) \to 1,\quad\text{as $\epsilon\to0$},
\end{multline*}
with respect to the normalized Haar measure on $\St_{k+p,k}$. In other words, the eigenvalues of the two matrices in the above display converge in probability as $\epsilon\to0$, so the convergence holds pointwise almost everywhere in $\St_{k+p,k}$ by passing to an appropriate subsequence~\cite[Thm.~2.3.2]{Durrett2019}. This is sufficient to show that the eigenvalues of $\widetilde S_\theta$ converge almost surely to the eigenvalues of $S_\theta$, as defined in~\cref{eq:tilde-S} and~\cref{eq:S}, when some diagonal entries of $\Sigma_2$ tend to 0, as desired. This completes the proof. 
\end{proof}

\subsection{Heuristics}
The formula~\cref{eq:cdf} may seem intimidating, but we can successively break it down in digestible pieces. In the simplest setting, let us suppose $k=1$, $p=0$, and $\Sigma=I_N$, in which case the random matrix $S_\theta$ of~\cref{eq:S} simply becomes $\sin^2(\theta)$. On the other hand, the hypergeometric function of matrix argument becomes the the familiar Gaussian hypergeometric function of scalar argument, and~\cref{eq:cdf} simplifies as
\begin{equation}\label{eq:heuristic}
\Phi_{I_N,1,0}(\theta) = \frac{\Gamma(\tfrac N2)}{\Gamma(\tfrac12)\Gamma(\tfrac{N+1}2)}\sin^{N-1}(\theta)\,\tFo(-\tfrac12,\tfrac{N-1}2;\tfrac{N+1}2;\sin^2(\theta))
\end{equation}
when we rewrite $\ttFo$ as $\tFo$ via the reflection formula~\cite[Eq.~(15.8.4)]{NIST}; see also~\cref{eq:tilde-cdf}. Yet from well-known identities~\cite[Ch.~8.17]{NIST}, one sees that~\cref{eq:heuristic} is nothing more than the normalized incomplete beta function with parameters $\tfrac{N-1}2$ and $\tfrac12$, evaluated at $\sin^2(\theta)$. In other words, $\Phi_{I_N,1,0}(\theta)$ is the CDF\ of the univariate $\mathrm{Beta}(\tfrac{N-1}2,\tfrac12)$ distribution scaled according to $\sin^2(\theta)$.

What does the beta distribution have to do with random subspaces? In fact, it has been known since the 1980s that the $\mathrm{Beta}(\tfrac{N-1}2,\tfrac12)$ distribution describes the squared sine of the angle between two uniformly sampled random lines through the origin in $\R^N$~\cite[Thm.~1.5.7]{Muirhead1982}, and this fact was used in~\cite{Frankl1990} to improve the classical Johnson--Lindenstrauss lemma. Later came a generalization to two uniformly sampled random $k$-dimensional subspaces of $\R^N$, upon whose proof ours is built~\cite{Absil2006}. Indeed, when $k$ is allowed to be $\ge1$ while $p=0$ and $\Sigma=I_N$ remain fixed, our result reduces exactly to~\cite[Eq.~(12)]{Absil2006}.

As for our formula~\cref{eq:cdf}, we generalize even further in two important ways. First, by permitting $p\ge0$, we allow the two subspaces in $\R^N$ to be non-equidimensional. This aspect of the generalization is what demands our extension of classical results in multivariate statistics to singular random matrices, via~\cref{lem:wishart-integral,lem:quadratic-form}. Second, by permitting $\Sigma$ to be any positive definite diagonal matrix, we allow one of the two subspaces to be sampled non-uniformly and instead be biased according to the relative weights of the diagonal entries of $\Sigma$. Notice that for general values of $\Sigma$, the $\sin^2(\theta)$ argument becomes the matrix $S_\theta$, which can be thought of as a multivariate version of an elliptically scaled squared sine function (the Jacobi sn function in the univariate case~\cite[Ch.~22]{NIST}) in which the major axes of the ellipsoid are scaled according to $\Sigma_1$ while the minor axes are scaled according to $\Sigma_2$.

We remark that we employ this point of view of~\cref{eq:cdf} as a generalized beta distribution both in the proof of~\cref{thm:cdf} above, and in the proof of our second main result,~\cref{thm:Fgap-bound}, below. 

A second, related perspective of~\cref{eq:cdf} is through the series expansion of the hypergeometric function. When $p$ is odd, then one may in fact write the hypergeometric term as a terminating series, namely,
\[
	\ttFo(\tfrac{-p+1}2,\tfrac{N-k-p}2;\tfrac{-p+1}2;I-S_\theta) = \sum_{\ell=0}^{\mu k}\sum_{\substack{\kappa\vdash\ell\\\kappa_1\le\mu}} \frac{(\tfrac{N-k-p}2)_\kappa}{\ell!}C_\kappa(I-S_\theta),
\]
where $\mu=\tfrac{p-1}2$, and the inner summation is over all integer partitions $\kappa=(\kappa_1,\kappa_2,\dots)$ of $\ell$ for which the largest component $\kappa_1$ is bounded by $\mu$ (see~\cref{lem:terminate}). If one takes $\mu\to\infty$, the infinite series converges to $|S_\theta|^{-(N-k-p)/2}$ by~\cite[Cor.~7.3.5]{Muirhead1982}. In other words, when $p$ is odd, the CDF~\cref{eq:cdf} can be viewed as a ratio in which the denominator is $|S_\theta|^{-(N-k-p)/2}$ while the numerator is a partial sum of $|S_\theta|^{-(N-k-p)/2}$ expanded as a zonal series around $I_k$:
\[
	\Phi_{\Sigma,k,p}(\theta) = \E\left[\frac{\sum_{\ell=0}^{\mu k}\sum_{\kappa\vdash\ell,\,\kappa_1\le\mu} \tfrac1{\ell!}\big(\tfrac{N-k-p}2\big)_\kappa C_\kappa(I-S_\theta)}{\sum_{\ell=0}^\infty\tfrac1{\ell!}\big(\tfrac{N-k-p}2\big)_\kappa C_\kappa(I-S_\theta)}\right], \qquad\text{$p$ odd}.
\]
Such a representation is significant for two reasons. First, we see that as $p$ increases, more of the partial sum appears in the numerator, hence the probability that the angular error is small also increases. Second, the terminating series lends itself very well to numerical computation, which we take advantage of in the experiments below.

\subsection{Numerical experiments}

Numerical evidence confirms~\cref{thm:cdf}. As discussed above,~\cref{eq:cdf} can be written as a terminating series when $p$ is odd, conveniently lending itself to the extremely fast, exact algorithm of~\cite{Koev2006}, which exploits a recursion property to compute the zonal polynomials. On the other hand, when $p$ is even, we approximate~\cref{eq:cdf} by truncating the series expansion of the hypergeometric term. Therefore, the formula for the CDF~\cref{eq:cdf} can be evaluated very quickly despite its complexity.~\cref{fig:cdf-numerics} demonstrates excellent agreement of our formula~\cref{eq:cdf} with empirical observations.

\section{Probabilistic error bounds for the RRF}\label{s:applications}
In practice, one often does not have access to the singular values of the matrix $A$ and so cannot apply~\cref{thm:cdf} directly. However, by having an exact expression for the RRF's subspace approximation error, we observe several properties of the RRF.

\subsection{Initial observations}\label{ss:auxiliary}
We first simplify~\cref{eq:cdf} so that it is easier to analyze.  Let
\begin{equation}\label{eq:J}
\cJ_{N,k,p}(S) := |S|^{\tfrac{N-k-p}2}\ttFo(\tfrac{-p+1}2,\tfrac{N-k-p}2;\tfrac{-p+1}2;I-S)
\end{equation}
be the integrand of~\cref{eq:cdf}, defined for all symmetric $k\times k$ matrices $S$ with $0<S<1$, with integer parameters $N,k,p\ge0$ such that $N\ge k+p$.  We have the following terminating series for $\cJ_{N,k,p}(S)$ when $p$ is odd. 

\begin{lemma}\label{lem:terminate}
The function $\cJ_{N,k,p}(S)$ of~\cref{eq:J} is increasing in the eigenvalues of $S$. Moreover,  when $p$ is odd,  there is a terminating series: 
\[
	\cJ_{N,k,p}(S) = |S|^{\tfrac{N-k-p}2}\sum_{\ell=0}^{k(p-1)/2} \sum_{\substack{\kappa\vdash\ell\\\kappa_1\le(p-1)/2}} \frac{(\tfrac{N-k-p}2)_\kappa}{\ell!} C_\kappa(I-S),
\]
where the inner summation is over all integer partitions $\kappa=(\kappa_1,\kappa_2,\dots)$ of $\ell$ such that the largest component $\kappa_1$ is bounded by $\tfrac{p-1}2$, and the $C_\kappa(\cdot)$'s are the zonal polynomials (see~\cref{ss:bg-matrix}).
\end{lemma}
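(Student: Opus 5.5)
Two claims must be established: monotonicity of $\cJ_{N,k,p}$ in the eigenvalues of $S$, and the terminating series for odd $p$. I would prove the series first, since it also clarifies the structure of $\ttFo$.

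\textbf{Terminating series.} Write $\ttFo(a,b;a;X)=\lim_{h\to0}\tFo(a,b;a+h;X)$ with $a=\tfrac{-p+1}2$, $b=\tfrac{N-k-p}2$, and expand via~\cref{eq:pFq}. The coefficient of $C_\kappa(X)$ is $\frac{(a)_\kappa(b)_\kappa}{|\kappa|!\,(a+h)_\kappa}$. By definition,
\[
(a)_\kappa=\prod_i\bigl(a-\tfrac{i-1}2\bigr)_{\kappa_i},
\]
and we compare each factor with the corresponding factor of $(a+h)_\kappa$.
\begin{itemize}
\item For $i=1$ and $p$ odd, $a=-\mu$ with $\mu=\tfrac{p-1}2\in\N$. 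Then $(-\mu)_{\kappa_1}/(-\mu+h)_{\kappa_1}\to1$ if $\kappa_1\le\mu$, because no factor vanishes.
\item If $\kappa_1>\mu$, the numerator $(-\mu)_{\kappa_1}$ is exactly $0$, while the denominator contains exactly one factor of size $O(h)$ (from $-\mu+\mu+h$). A priori this ratio does not vanish.
\item For $i\ge2$, the $i$-th factors $a-\tfrac{i-1}2$ are again integers or half-integers, so similar cancellations occur there.
\end{itemize}
So I must track orders of vanishing carefully. The cleanest route is to take the definition via the limit in a single scalar parameter $h$, where every factor of the form $(c+h)$ with $c\in-\N$ pairs with the factor $c$ in the numerator. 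The numerator and denominator then have the same zero pattern at $h=0$, except that the numerator vanishes identically. I expect the paper's convention to be that $\ttFo(a,b;a;X)$ keeps exactly the terms for which the ratio is $1$, namely $\kappa_1\le\mu$.

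This identification is the main obstacle. I would justify it as in the scalar reflection formula~\cite[Eq.~(15.8.4)]{NIST}: compute~\cref{eq:tilde-cdf} with $c=\tfrac{N-p+1}2$, apply the reflection formula~\cite[Prop.~3.1]{Richards2024}, and use $\sum_{\kappa}\frac{(b)_\kappa}{|\kappa|!}C_\kappa(X)=|I-X|^{-b}$~\cite[Cor.~7.3.5]{Muirhead1982}. Matching with this expansion shows the limit is the truncated $\oFz$-type series $\sum_{\kappa_1\le\mu}\frac{(b)_\kappa}{|\kappa|!}C_\kappa(I-S)$. Partitions with $\kappa_1\le\mu$ and $\kappa$ of length at most $k$ (since $C_\kappa$ vanishes on $k\times k$ matrices for longer $\kappa$) satisfy $|\kappa|\le \mu k$. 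Hence $\ell$ runs up to $k(p-1)/2$, which gives the stated formula.

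\textbf{Monotonicity.} I would avoid manipulating the series directly and use the probabilistic meaning instead. By the proof of~\cref{thm:cdf} (specialize $\Sigma_1$ and $\Sigma_2$), for every fixed $0<S<1$ the value $\cJ_{N,k,p}(S)$ is a probability. Concretely, by the generalized-beta representation, $\cJ_{N,k,p}(S)=\P(\widetilde W< S)$ in a suitable sense: $\cJ$ is the CDF, in the Löwner/eigenvalue sense, of a matrix-variate beta-type matrix with a rotation-invariant law. Indeed,~\cref{eq:K} with $\Sigma$ chosen so that the $\MACG$ parts are trivial expresses $\cJ(I-K)$ essentially as $\P(K<W<1)$ with $W$ orthogonally invariant.

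Rotation invariance makes $\cJ$ depend only on the eigenvalues of $S$. Increasing an eigenvalue of $S$ (i.e.\ decreasing $K$ in the Löwner order) enlarges the event $\{K<W\}$, so the probability is nondecreasing. If tying $\cJ$ to an invariant random matrix for arbitrary $S$ proves awkward, the fallback is to write $\cJ=|S|^b\cdot(\text{series})$ and check it against the incomplete multivariate beta integral from~\cref{lem:partial-integral}. That integral exhibits $\cJ(S)$ as an integral of a positive density over the region $\{0<U<S\}$ (up to the transformation used in the proof), which is manifestly increasing in $S$.
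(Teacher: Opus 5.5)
Your monotonicity argument is the same as the paper's. With $\Sigma=I$, the hypergeometric argument in \cref{eq:K} collapses to $I-K$, so after reflection $\P(W>K)=\cJ_{N,k,p}(I-K)$ for every symmetric $0<K<1$. Löwner monotonicity of the event $\{W>K\}$, together with the fact that $\cJ$ depends only on eigenvalues, then gives the claim. Your bookkeeping $S=I-K$ is the accurate version: $\P(K<W<1)$ is decreasing in $K$, hence increasing in $S$.

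For the terminating series you take a different route from the paper. The paper only observes that the $\tFo(\tfrac{-p+1}2,\tfrac{N-k-p}2;\tfrac{N-p+1}2;\cdot\,)$ in \cref{eq:tilde-cdf} is a polynomial for odd $p$, and that reflection preserves this. Your route, evaluating the defining limit $\lim_{h\to0}\tFo(a,b;a+h;X)$ term by term, is actually the better one, because it produces the explicit coefficients $(b)_\kappa/|\kappa|!$ and the constraint $\kappa_1\le\mu$. But as written your argument has a gap of your own making.

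The assertion that for $\kappa_1>\mu$ the ratio ``a priori does not vanish'' is false. Since $a=-\mu$ is held fixed, $(a)_\kappa=0$ exactly, independently of $h$. Meanwhile $(a+h)_\kappa\ne0$ for $0<|h|<\tfrac12$, because each of its factors is an element of $\tfrac12\mathbb Z$ shifted by $h$. So these terms are identically zero for every small $h\ne0$; this is exactly your own remark that the numerator vanishes identically.

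For $\kappa_1\le\mu$, no factor $a-\tfrac{i-1}2+j$ ($0\le j<\kappa_i$) of $(a)_\kappa$ vanishes. For even $i$ it is a half-integer. For odd $i$ it vanishes only if $\kappa_i>\mu+\tfrac{i-1}2$, which is impossible since $\kappa_i\le\kappa_1\le\mu$. Hence $(a)_\kappa/(a+h)_\kappa\to1$.

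For each fixed $h$ the series is already a finite sum over partitions with $\kappa_1\le\mu$ and at most $k$ parts. So no limit--sum interchange is needed, and the lemma's formula (with $|\kappa|\le k\mu$) follows immediately; there is no convention to guess.

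The justification you actually propose instead, matching against the reflection formula and $\sum_\kappa(b)_\kappa C_\kappa(X)/|\kappa|!=|I-X|^{-b}$, is not an argument. That identity concerns the untruncated series and cannot tell you which partitions survive. Replace it with the direct computation above.
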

\begin{proof}
For the first part of the statement, observe from the proof of~\cref{thm:cdf} that the integrand of~\cref{eq:K} with $\Sigma=I$ is simply $\cJ_{N,k,p}(K)$, for any symmetric $0<K<1$. When $\Sigma$ is scalar, there is no dependence on $Q,Q_1,H_1$, so the integrals disappear. Since~\cref{eq:K} is a cumulative distribution function, it is increasing in $K$ with respect to the L\"owner order and hence also in the eigenvalues of $K$. The determinant and the hypergeometric function depend only on the eigenvalues of their arguments, so the same holds for $\cJ_{N,k,p}(S)$.

For the second part of the statement, recall that the $\ttFo$ factor results from applying the reflection formula~\cite{Richards2024} to the hypergeometric function in~\cref{eq:tilde-cdf}. When $p$ is odd, then the hypergeometric function in~\cref{eq:tilde-cdf} is a terminating series, and the same then holds after applying the reflection formula.
\end{proof}

Next, we observe that the angular error improves with more oversampling.

\begin{lemma}\label{lem:p-monotone}
For each $0\le\theta\le\pi/2$, $\Phi_{\Sigma,k,p}(\theta)$ of~\cref{eq:cdf} is increasing in $p$. 
%Moreover, $\Phi_{\Sigma,k-s,p+s}(\theta)$ is increasing in $s$.
\end{lemma}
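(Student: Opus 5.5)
The cleanest route avoids the formula~\cref{eq:cdf} and works with the probabilistic meaning of $\Phi_{\Sigma,k,p}(\theta)=\P(\theta_1<\theta)$, coupling the RRF runs with oversampling $p$ and $p+1$. Draw a single Gaussian matrix $\Omega'=[\Omega\ \omega]\in\R^{n\times(k+p+1)}$, where $\Omega$ consists of the first $k+p$ columns. Then $\Omega$ is exactly the test matrix for oversampling $p$, and $\Omega'$ is the test matrix for oversampling $p+1$. The range of $A\Omega$ is contained in the range of $A\Omega'$. Write $\widetilde U_1^{(p)}$ and $\widetilde U_1^{(p+1)}$ for the corresponding orthonormal bases.

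The key step is monotonicity of the largest principal angle under enlargement of the second subspace. For a fixed $k$-dimensional subspace $\range(U_1)$, we have
\[
\sin\theta_1(U_1,Y)=\|(I-YY^\top)U_1\|
\]
whenever $\dim\range(Y)\ge k$. This is consistent with the definition in~\cref{s:background} via the singular values of $U_1^\top Y$, since only the $k$ smallest cosines matter when $\dim\range(Y)\ge k$. If $\range(Y)\subseteq\range(Y')$, then $I-Y'Y'^\top\preceq I-YY^\top$ as orthogonal projectors. Therefore
\[
\|(I-Y'Y'^\top)U_1\|\le\|(I-YY^\top)U_1\|.
\]
Applying this with $Y=\widetilde U_1^{(p)}$ and $Y'=\widetilde U_1^{(p+1)}$ gives $\theta_1^{(p+1)}\le\theta_1^{(p)}$ pointwise on the coupled probability space. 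Consequently the event $\{\theta_1^{(p)}<\theta\}$ is contained in $\{\theta_1^{(p+1)}<\theta\}$, and taking probabilities gives $\Phi_{\Sigma,k,p}(\theta)\le\Phi_{\Sigma,k,p+1}(\theta)$. Induction on $p$ then completes the proof.

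Two points need checking.

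\textbf{Rank.} Almost surely $\rank(A\Omega)=\min(k+p,r)\ge k$ whenever $r\ge k$, so the projector formula for $\sin\theta_1$ is legitimate.

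\textbf{Identification with the theorem.} The quantity $\Phi_{\Sigma,k,p}$ defined by the expectation in~\cref{thm:cdf} is, by that theorem, exactly the law of $\theta_1$ for the RRF with parameters $(k,p)$. There are two cases to cover:
\begin{itemize}
\item If $k+p+1\ge r$, then part~1 of~\cref{thm:cdf} gives $\Phi_{\Sigma,k,p+1}\equiv1$ on $(0,\pi/2]$, so monotonicity is immediate.
\item If $k+p<r$, part~2 applies to both parameter values and the coupling argument goes through.
\end{itemize}

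The main (mild) obstacle is bookkeeping at the endpoint $\theta=0$, together with making sure the expectation formula, rather than merely the probability, is what is being compared. Both quantities coincide by~\cref{thm:cdf}, so no analysis of the hypergeometric term or of $S_\theta$ is needed. A direct analytic proof through $\cJ_{N,k,p}$ and~\cref{lem:terminate} would be much harder, since both $S_\theta$ and the parameters change with $p$. I would avoid that route.
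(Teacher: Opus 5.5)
Your proof is correct and takes essentially the same approach as the paper: it also builds the smaller sketch from the first $k+p'$ columns of one Gaussian matrix and uses nestedness of the column spaces to get $\theta_1\le\theta_1'$ pointwise, then concludes by stochastic dominance. The only differences are cosmetic: the paper cites the variational characterization of principal angles where you use the projector ordering $I-Y'Y'^\top\preceq I-YY^\top$, and it compares arbitrary $p'<p$ directly rather than by one-step induction.
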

\begin{proof}
Recall that, up to a change of basis, $\theta_1$ is the largest principal angle between the column spaces of $I_{N,k}$ and $\Sigma\Omega$, where $\Omega\sim\cN_{N,k+p}(0,1)$. Suppose $0\le p'<p$. Let $\Omega'$ be given by the first $k+p'$ columns of $\Omega$ and let $\theta_1'$ be the largest principal angle between $I_{N,k}$ and $\Sigma\Omega'$. The column space of $\Sigma\Omega'$ is strictly contained in the column space of $\Sigma\Omega$, so $\theta_1\le\theta_1'$ by the variational definition of principal angles~\cite[Eq.~(6.4.3)]{Golub2013}. Moreover, we have $\Omega'\sim\cN_{N,k+p'}(0,1)$ by independence, so it follows by properties of stochastic dominance that $\Phi_{\Sigma,k,p}(\theta)\ge\Phi_{\Sigma,k,p'}(\theta)$ for every $0\le\theta\le\pi/2$~\cite{Shaked2007}. 
%The second claim follows by similar logic; if $\theta_1''$ is the largest principal angle between $I_{N,k-s}$ and $\Sigma\Omega$, then $\theta_1\ge\theta_1''$, so stochastic dominance implies $\Phi_{\Sigma,k,p}(\theta)\le\Phi_{\Sigma,k-s,p+s}(\theta)$.
\end{proof}

Our last result is highly technical yet essential for all probabilistic estimates we derive for the RRF's subspace approximation error as a consequence of~\cref{thm:cdf}.

\begin{proposition}\label{prop:J-scalar}
Let $N,k,p\ge0$ be integers with $N\ge k+p$. If $0\le s\le 1$, then
\begin{equation}\label{eq:J-bound}
	\cJ_{N,k,p}(sI_k) \ge 1 - \binom{\nu+\mu}{\mu+1}(1-s)^{\mu+1},
\end{equation}
where $\mu=\max(0,\floor{\tfrac{p-1}2})$, $\nu=\tfrac{k(N-k-p)}2$, and $\cJ_{N,k,p}(\cdot)$ is defined in~\cref{eq:J}. The binomial coefficient is defined in the usual way if $\nu$ is a half-integer.
\end{proposition}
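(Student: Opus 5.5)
The plan is to reduce \cref{eq:J-bound} to a scalar tail bound. When $S=sI_k$, the zonal expansion of $\cJ_{N,k,p}$ collapses to a one-variable power series in $t:=1-s$.

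\textbf{Step 1: expand $\cJ_{N,k,p}(sI_k)$ and collapse the inner sums.} Set $a=\tfrac{N-k-p}2$, so that $\nu=ka$ and $|sI_k|^{a}=(1-t)^{\nu}$. Write $\ttFo(\tfrac{-p+1}2,a;\tfrac{-p+1}2;tI_k)=\sum_\kappa c_\kappa\,\frac{(a)_\kappa}{|\kappa|!}C_\kappa(tI_k)$. The coefficients $c_\kappa$ are the limits of $(\tfrac{-p+1}2)_\kappa/(\tfrac{-p+1}2+h)_\kappa$ as $h\to0$. For $p$ odd, \cref{lem:terminate} gives $c_\kappa=1$ when $\kappa_1\le\mu$ and $c_\kappa=0$ otherwise. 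In every case, a partition with $|\kappa|\le\mu$ has all rows of length at most $\mu$, so none of the Pochhammer factors in numerator or denominator vanish and $c_\kappa=1$.

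Next compare with the binomial series $\oFz(a;tI_k)=|I-tI_k|^{-a}=(1-t)^{-\nu}=\sum_{\ell\ge0}\frac{(\nu)_\ell}{\ell!}t^\ell$ from \cite[Cor.~7.3.5]{Muirhead1982}. By homogeneity of the zonal polynomials, the degree-$\ell$ parts must match, which gives
\[
\sum_{\kappa\vdash\ell}\frac{(a)_\kappa}{\ell!}C_\kappa(tI_k)=\frac{(\nu)_\ell}{\ell!}t^\ell .
\]

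\textbf{Step 2: drop the tail and recognize a negative binomial CDF.} Suppose every term with $|\kappa|>\mu$ contributes nonnegatively. Then
\[
\cJ_{N,k,p}(sI_k)\ \ge\ (1-t)^\nu\sum_{\ell=0}^{\mu}\frac{(\nu)_\ell}{\ell!}t^\ell .
\]
The right-hand side is the probability that a negative binomial variable with parameters $(\nu,t)$ is at most $\mu$. By the standard identity it equals $1-I_t(\mu+1,\nu)$, where $I_t$ is the regularized incomplete beta function. Using $(1-x)^{\nu-1}\le1$ when $\nu\ge1$, we get
\[
I_t(\mu+1,\nu)=\frac{1}{\Beta(\mu+1,\nu)}\int_0^t x^{\mu}(1-x)^{\nu-1}\,\dd x\le\frac{t^{\mu+1}}{(\mu+1)\Beta(\mu+1,\nu)}=\binom{\nu+\mu}{\mu+1}t^{\mu+1},
\]
which is exactly \cref{eq:J-bound}. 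The degenerate cases $\nu<1$, including $\nu=0$, need a separate check; for them I would fall back on $\cJ_{N,k,p}\ge0$ together with the trivial cases.

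\textbf{Step 3 (main obstacle): nonnegativity of the discarded terms.} The whole argument hinges on Step 2's assumption that each dropped term is nonnegative. Since $C_\kappa(tI_k)\ge0$, this reduces to showing $c_\kappa(a)_\kappa\ge0$ for $|\kappa|>\mu$.

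For $p$ odd, $c_\kappa\in\{0,1\}$. The remaining concern is the sign of $(a)_\kappa=\prod_i(a-\tfrac{i-1}2)_{\kappa_i}$, which can fail when $a<\tfrac{k-1}2$. I would handle this by grouping terms by degree instead of bounding each one. For fixed $\ell$, the retained sum $\sum_{\kappa\vdash\ell,\kappa_1\le\mu}(a)_\kappa C_\kappa(tI_k)$ can be interpreted probabilistically through the CDF representation \cref{eq:K} with $\Sigma=I$. Failing that, I would use the monotonicity in \cref{lem:terminate} to reduce to $a\ge\tfrac{k-1}2$.

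For $p$ even, the series does not terminate and the $c_\kappa$ are genuine ratios of Pochhammer symbols with half-integer parameters. There I would first try to show $c_\kappa\ge0$ directly by tracking signs row by row. If that fails, I would instead compare with $p-1$ via \cref{lem:p-monotone}, applied to the scalar case $\Sigma=I$, and check that the resulting constant can still be absorbed into $\binom{\nu+\mu}{\mu+1}$. Of all the steps, this even-$p$ case is the one I expect to require the most care.
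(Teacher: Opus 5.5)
Your Steps 1 and 2 are correct. For $\nu\ge1$, Step 2 is a genuinely shorter route than the paper's. The paper reaches the same truncated sum $(1-t)^\nu\sum_{\ell\le\mu}\binom{\nu+\ell-1}{\ell}t^\ell$. It then multiplies out the product, evaluates the tail coefficients $A_\ell$ via Chu--Vandermonde and Pfaff--Saalsch\"utz, and proves $\sum_\ell A_\ell t^\ell\le A_0$ by a complete-monotonicity argument. You instead recognize the truncated sum as the negative binomial probability $1-I_t(\mu+1,\nu)$ and bound $(1-x)^{\nu-1}\le1$ inside the incomplete beta integral. That replaces all of the paper's machinery and makes visible that $\nu\ge1$ is exactly the hypothesis used.

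\textbf{The gap: Step 3 is left open, and your fallbacks would not close it.}
\begin{itemize}
\item The monotonicity in \cref{lem:terminate} is in the eigenvalues of $S$, not in $N$. Changing $N$ also changes $\nu$, so it cannot be used to force $a\ge\frac{k-1}2$.
\item Passing from even $p$ to $p-1$ via \cref{lem:p-monotone} changes the exponent to $\frac{k(N-k-p+1)}2=\nu+\frac k2$. The resulting constant $\binom{\nu+k/2+\mu}{\mu+1}$ is strictly larger, so it cannot be absorbed. The paper's even-$p$ step has the same problem: it keeps $\gamma=\frac{N-k-p}2$ after passing to $p-1$.
\end{itemize}

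The missing observation is elementary.
\begin{itemize}
\item Since $2a=N-k-p$ is a nonnegative integer, the parameters $a-\frac{i-1}2$ decrease in half-steps and hit exactly $0$ at $i=2a+1$. Any $\kappa$ with at least $2a+1$ nonzero parts contains the factor $(0)_{\kappa_{2a+1}}=0$. Every other $\kappa$ involves only positive parameters. Hence $(a)_\kappa\ge0$ for all $\kappa$.
\item For every $p$, odd or even, $c_\kappa=\lim_{h\to0}(\frac{-p+1}2)_\kappa/(\frac{-p+1}2+h)_\kappa$ equals $1$ if $(\frac{-p+1}2)_\kappa\ne0$ and $0$ otherwise. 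In the second case the numerator vanishes identically while the denominator is nonzero for small $h\ne0$. So for even $p$ the $c_\kappa$ are not ``genuine ratios,'' and no reduction to $p-1$ is needed.
\item With $C_\kappa(tI_k)=t^{|\kappa|}C_\kappa(I_k)\ge0$, every discarded term is nonnegative.
\end{itemize}
This is also the unstated justification behind the paper's ``because each coefficient is positive.'' It applies to $\cJ$ exactly as defined in \cref{eq:J}.

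\textbf{The degenerate cases also need correcting.}
\begin{itemize}
\item For $\nu=0$ the claim is $\cJ\ge1$, so ``$\cJ\ge0$'' is not enough. However, your Step 2 already covers this case, since the truncated sum equals $1$.
\item For $\nu=\frac12$ (that is, $k=1$ and $N=p+2$) with $p$ odd, the statement is false. Here $\cJ_{N,1,p}(s)=1-I_t(\mu+1,\frac12)$ exactly, and $(1-x)^{-1/2}>1$ reverses your inequality. Concretely, take $(N,k,p)=(3,1,1)$. Then $\mu=0$ and $\cJ_{3,1,1}(s)=\sqrt s$, while the right-hand side of \cref{eq:J-bound} is $\frac{1+s}2>\sqrt s$ for $s<1$.
\end{itemize}
No fallback can rescue the $\nu=\frac12$ case; the proposition must exclude it, for instance by assuming $\nu=0$ or $\nu\ge1$. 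The paper's argument breaks at the same point: for $\nu=\frac12$ and $\mu=0$ its series has $A_1=\frac18>0$, so $F\le A_0$ fails.
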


\begin{proof}
The case $p=0$ is simple. Note that $\ttFo(\tfrac12,\tfrac{N-k}2;\tfrac12;I-S)$ has non-negative coefficients in its zonal series expansion and thus is bounded below by 1. Then $\cJ_{N,k,0}(sI_k) \ge s^{k(N-k)/2} \ge 1 - \tfrac{k(N-k)}2(1-s)$, as desired. In what follows, we assume $p\ge1$, such that $\mu=\floor{\tfrac{p-1}2}$.

Notice that by setting $\Sigma_1=I_k$, $\Sigma_2=(s-1)^{-1/2}I_{N-k}$, and $\theta=\tfrac\pi4$, we have $\cJ_{N,k,p}(sI_k) = \Phi_{\Sigma,k,p}(\tfrac\pi4)$ by~\cref{thm:cdf}. If $p$ is even, then by~\cref{lem:p-monotone} we can lower bound $\cJ_{N,k,p}(sI_k)$ by $\cJ_{N,k,p-1}(sI_k)$, justifying the floor function in the definition of $\mu$. Let $t=1-s$ and set $\gamma=\tfrac{N-k-p}2$. By~\cref{lem:terminate},
\begin{equation}\label{eq:sai-1}
\cJ_{N,k,p}(sI_k) 
	\ge (1-t)^\nu\sum_{\ell=0}^{k\mu} \sum_{\substack{\kappa\,\vdash\ell\\\kappa_1\le\mu}}
	\frac{(\gamma)_\kappa}{\ell!} C_\kappa(t I_k)
	= (1-t)^\nu \sum_{\ell=0}^{k\mu} \sum_{\substack{\kappa\,\vdash\ell\\\kappa_1\le\mu}}
	\frac{(\gamma)_\kappa C_\kappa(I_k)}{\ell!} t^\ell.
\end{equation}
Observe that, by~\cite[Cor.~7.3.5]{Muirhead1982}, the summation on the right-hand side is a partial sum of the series
\[
	\sum_{\ell=0}^\infty\sum_{\kappa\,\vdash\ell}\frac{(\gamma)_\kappa C_\kappa(I_k)}{\ell!} t^\ell 
	= \oFz(\gamma;t I_k) = (1-t)^{-\nu}
	= \sum_{\ell=0}^\infty\binom{\nu+\ell-1}{\ell}t^\ell.
\]
The condition $\kappa_1\le\mu$ in~\cref{eq:sai-1} does not play a role in the summation when $\ell\le\mu$, namely,
\[
	\sum_{\substack{\kappa\,\vdash\ell\\\kappa_1\le\mu}} \frac{(\gamma)_\kappa C_\kappa(I_k)}{\ell!} 
	= \sum_{\kappa\,\vdash\ell} \frac{(\gamma)_\kappa C_\kappa(I_k)}{\ell!} 
	= \binom{\nu+\ell-1}{\ell},\qquad \forall\,\ell\le\mu
\]
hence, because each coefficient is positive,
\[
	\cJ_{N,k,p}(sI_k) \ge (1-t)^\nu\sum_{\ell=0}^\mu \binom{\nu+\ell-1}{\ell}t^\ell.
\]
We expand the right-hand side as
\begin{multline*}
(1-t)^\nu\sum_{\ell=0}^\mu \binom{\nu+\ell-1}{\ell} t^\ell
= \left[\sum_{j=0}^\infty (-1)^j \binom{\nu}{j} t^j\right] 
	\left[\sum_{\ell=0}^\mu \binom{\nu+\ell-1}{j} t^\ell\right] \\
= \sum_{\ell=0}^\infty t^\ell \left[\sum_{j=\max(0,\ell-\mu)}^\ell (-1)^j \binom{\nu}{j} \binom{\nu+\ell-j-1}{\ell-j}\right],
\end{multline*}
where the summations converge for $|t|<1$ and, in particular, terminate when $\nu$ is an integer. The coefficients of $t^\ell$ for $\ell=1,\dots,\mu$ are zero since we are multiplying $(1-t)^\nu$ with a partial sum of its reciprocal. Thus
\begin{equation}\label{eq:sai-2}
\cJ_{N,k,p}(sI_k) \ge 1 - t^{\mu+1}\sum_{\ell=0}^\infty A_\ell\,t^\ell,
\end{equation}
where $A_\ell := \sum_{j=\ell+1}^{\ell+\mu+1} (-1)^{j+1}\binom{\nu}{j}\binom{\nu+\mu+\ell-j}{\mu+\ell-j+1}$. 

We claim that $A_\ell = (-1)^\ell\binom{\nu+\mu}{\mu+\ell+1}\binom{\mu+\ell}{\mu}$ by employing a hypergeometric proof; a clear exposition of the main techniques can be found in~\cite{Roy1987}. First observe that
\[
	\sum_{j=0}^{\ell+\mu+1} (-1)^{j+1}\binom{\nu}{j}\binom{\nu+\mu+\ell-j}{\mu+\ell-j+1}
	= \sum_{j=0}^{\ell+\mu+1} (-1)^{\mu+\ell}\binom{\nu}{j}\binom{-\nu}{\mu+\ell-j+1} = 0
\]
by the Chu--Vandermonde identity~\cite[Eq.~(15.4.24)]{NIST}. Therefore, 
\[
	A_\ell = \sum_{j=0}^\ell (-1)^j\binom{\nu}{j}\binom{\nu+\mu+\ell-j}{\mu+\ell-j+1}.
\]
Rearranging the binomial coefficients yields
\[
	A_\ell = \binom{\nu+\mu+\ell}{\mu+\ell+1} \sum_{j=0}^\ell \frac{(-\nu)_j(-\mu-\ell-1)_j(-\ell)_j}{j!(-\nu-\mu-\ell)_j(-\ell)_j},
\]
where $(a)_k=a(a+1)\dots(a+k-1)$ denotes the rising factorial, and we multiplied the summand by $(-\ell)_j$ in both the numerator and denominator. The summation can now be written as the hypergeometric function ${}_3 F_2\left(\begin{smallmatrix}-\nu,&-\mu-\ell-1,&-\ell\\ &-\nu-\mu-\ell,&-\ell\end{smallmatrix};1\right)$, which satisfies the conditions of the Pfaff--Saalsch\"utz identity~\cite[Eq.~(16.4.3)]{NIST}, whence
\begin{equation}\label{eq:Al}
	A_\ell = \binom{\nu+\mu+\ell}{\mu+\ell+1}\frac{(-\mu-\ell)_\ell(-\nu+1)_\ell}{\ell! (-\nu-\mu-\ell)_\ell} = (-1)^\ell\binom{\nu+\mu}{\mu+\ell+1}\binom{\mu+\ell}{\mu},
\end{equation}
where the latter equality follows from regrouping terms.

Next, we employ the representation~\cref{eq:Al} to control~\cref{eq:sai-2}. Let $F(x):=\sum_{\ell=0}^\infty A_\ell x^\ell$, absolutely convergent for $|x|<1$, with partial sums $F_n(x):=\sum_{\ell=0}^n A_\ell\,x^\ell$. We now claim that, whenever the subscripts are valid, we have $F_{2n-1}(x)\le F(x)\le F_{2n}(x)$ for integer $n$ and every $x\in[0,1]$. Indeed, let $G_n(x):=F(x)-F_n(x)=\sum_{\ell=n+1}^\infty A_\ell\,x^\ell$, so we wish to show $(-1)^{n+1}G_n(x)\ge0$ for every $x\in[0,1]$.

First, we argue that $(-1)^r\lim_{x\uparrow1}F^{(r)}(x)\ge0$. By rearranging terms in the vein of the hypergeometric proof of~\cref{eq:Al}, we obtain
\begin{multline*}
F^{(r)}(x) = \sum_{\ell=r}^\infty \frac{\ell!}{(\ell-r)!}A_\ell x^{\ell-r} = \sum_{\ell=0}^\infty \frac{(\ell+r)!}{\ell!}A_{\ell+r} x^\ell \\
= \frac{(-1)^r(\mu+1)_r(\nu-1)_r}{(\mu+r+1)!(\nu+\mu+1)_r}\,\tFo\left(\begin{matrix}-\nu+r+1,&\mu+r+1\\&\mu+r+2\end{matrix}\ ;x\right).
\end{multline*}
If $0\le r<\nu$, then the Gaussian hypergeometric formula~\cite[Eq.~(15.4.20)]{NIST} implies that $F^{(r)}(1)=(-1)^r(\mu+1)_r$. If $r\ge\nu$, we split into two cases. When $\nu$ is an integer, $A_\ell=0$ for $\ell\ge\nu$ so that $F(x)$ is a degree-$(\nu-1)$ polynomial, hence $F^{(r)}(1)=0$. If $\nu$ is not an integer, then $-\nu+r+1>0$ so that the hypergeometric term above is well-defined and strictly positive for $0<x<1$. Thus, $(-1)^rF^{(r)}(x)\ge0$ and in particular tends to $+\infty$ as $x\uparrow1$.

As a consequence, we claim that $F(x)$ is completely monotone on $(0,1)$, i.e., $(-1)^rF^{(r)}(x)\ge0$ for every $x\in(0,1)$. We have already shown that $(-1)^rF^{(r)}(x)\ge0$ whenever $r\ge\nu$. For $0\le r<\nu$, we proceed by backward induction. Suppose $(-1)^{r+1}F^{(r+1)}(x)\ge0$ for some $0<r<\nu$. Then $F^{(r)}$ is monotonic on $[0,1]$; meanwhile, $F^{(r)}(0)=r!A_r$ and $F^{(r)}(1)$ have the same sign, $(-1)^r$, as computed above. This implies $(-1)^rF^{(r)}(x)\ge0$ for every $x\in[0,1]$, completing the induction.

Note that $G_n^{(r)}(x)=F^{(r)}(x)$ when $r\ge n+1$, so $(-1)^{n+1}G_n^{(n+1)}(x)\ge0$. But $G_n^{(r)}(0)=0$ for $r\le n$, which implies $G_n^{(r)}(x)$ has the same sign as $G_n^{(r+1)}(x)$. Thus $(-1)^{n+1}G_n(x)\ge0$ for $x\in[0,1]$, which proves the desired bounds $F_{2n-1}(x)\le F(x)\le F_{2n}(x)$ for integer $n$.

To conclude, the first-order approximation afforded by the above bound, namely, $F(x)\le F_0(x)=A_0$, in conjunction with~\cref{eq:sai-2}, completes the proof of Proposition~\ref{prop:J-scalar}.
\end{proof}

The point of Proposition~\ref{prop:J-scalar} is that it allows us to derive probabilistic estimates for the largest principal angle $\theta_1$ when $\Sigma_1,\Sigma_2$ are scalar matrices. In fact,~\cref{eq:J-bound} is a first-order Taylor expansion of $s\mapsto \cJ_{N,k,p}(sI_k)$ around $s=1$, which is a function with a closed-form inverse that can be used to write down probabilistic error estimates for the RRF in a more familiar form, namely, in terms of the desired failure tolerance. The following result is such an error estimate.

\begin{corollary}\label{cor:scalar-bd}
Assume the hypotheses of~\cref{thm:cdf}. If $\Sigma_1=aI_k$ and $\Sigma_2=bI_{N-k}$ are scalar matrices, then with probability $\ge1-\delta$,
\begin{equation}\label{eq:scalar-bd}
	\sin\theta_1 \le \frac{(b/a)C_{N,k,p,\delta}}{\sqrt{1+(b/a)^2C_{N,k,p,\delta}^2}}
\end{equation}
where $C_{N,k,p,\delta} = [\delta^{-1}\binom{\nu+\mu}{\mu+1}]^{1/(p+1)}$, where $\nu=\frac{k(N-k-p)}{2}$ and $\mu=\max(0,\floor{\frac{p-1}{2}}).$ In particular,~\cref{eq:scalar-bd} holds with $C_{N,k,p,\delta}=\delta^{-1/(p+1)}\sqrt{\frac{e(k(N-k-p)+p-1)}{p+1}}$.
\end{corollary}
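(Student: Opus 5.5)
With $\Sigma_1=aI_k$ and $\Sigma_2=bI_{N-k}$, the random matrix $S_\theta$ of \cref{eq:S} becomes deterministic and scalar. The plan is to use this to reduce \cref{thm:cdf} to \cref{prop:J-scalar}, and then invert the resulting bound. The even-$p$ case does not go through as stated and needs resolving before the argument is complete.

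\emph{Step 1: $S_\theta$ is a scalar matrix.} We are in the regime $k+p<r$, so $r=N$ and $\Sigma_2$ is invertible. For any $H_1\in\St_{N-k,k+p}$ we get $H_1^\top\Sigma_2^2H_1=b^2I_{k+p}$. This matrix is invertible, so $P_\perp=I_k$. Since $Q_1^\top Q_1=I_k$,
\[
Q_1^\top(H_1^\top\Sigma_2^2H_1)^{-1}Q_1=b^{-2}I_k .
\]
Hence $S_\theta=sI_k$ with $s=(1+\cot^2(\theta)\,b^2/a^2)^{-1}$. The expectation in \cref{eq:cdf} is therefore trivial, and $\P(\theta_1<\theta)=\cJ_{N,k,p}(sI_k)$.

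\emph{Step 2: lower bound on the CDF.} \cref{prop:J-scalar} gives
\[
\P(\theta_1<\theta)\ge 1-\tbinom{\nu+\mu}{\mu+1}(1-s)^{\mu+1}.
\]

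\emph{Step 3: choice of $\theta$.} Write $r=(b/a)C$ and choose $\theta$ with $\tan\theta=r$, which is exactly $\sin\theta=r/\sqrt{1+r^2}$, the right-hand side of \cref{eq:scalar-bd}. Then $\cot^2(\theta)\,b^2/a^2=C^{-2}$, so
\[
1-s=\frac{1}{1+C^2}<C^{-2}.
\]
It therefore suffices to have $C^{2(\mu+1)}\ge\delta^{-1}\binom{\nu+\mu}{\mu+1}$. Since the CDF lower bound holds for this $\theta$, we get $\P(\sin\theta_1<\sin\theta)\ge1-\delta$, which is \cref{eq:scalar-bd}.

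\emph{Odd $p$.} Here $2(\mu+1)=p+1$, so $C=[\delta^{-1}\binom{\nu+\mu}{\mu+1}]^{1/(p+1)}$ satisfies the requirement with equality.

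\emph{Even $p$ (main obstacle).} Here $2(\mu+1)=p$, so $C^{2(\mu+1)}$ falls short of the required power by a factor $C$, and the stated exponent $1/(p+1)$ does not follow directly.
\begin{itemize}
\item For $p=0$ it is fine. With $x=\nu/\delta\ge1$ we have $1+x^2\ge x$, so the sharper identity $1-s=1/(1+C^2)$ suffices.
\item For even $p\ge2$, the first attempt is to keep the exact expression and check $(1+C^2)^{p/2}\ge\delta^{-1}\binom{\nu+\mu}{\mu+1}$. This fails for large $\delta^{-1}\binom{\nu+\mu}{\mu+1}$: writing $y=C^2$ and $x=\delta^{-1}\binom{\nu+\mu}{\mu+1}$, the left side grows like $y^{p/2}=x^{p/(p+1)}<x$.
\item The second attempt is \cref{lem:p-monotone}: compare with $p-1$ (odd) and apply \cref{prop:J-scalar} with $(p-1,\nu'=\tfrac{k(N-k-p+1)}2)$. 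This yields a valid bound with exponent $1/p$, but still not the stated one.
\end{itemize}
So for even $p$ I would either prove the bound with exponent $1/(2\mu+2)$, or look for an improvement of \cref{prop:J-scalar} for even $p$ that produces one extra power of $(1-s)^{1/2}$.

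\emph{Explicit constant.} Use $\binom{n}{m}\le(en/m)^m$, with half-integer $n=\nu+\mu$ handled through the Gamma function, to get
\[
\tbinom{\nu+\mu}{\mu+1}^{1/(\mu+1)}\le \frac{e(\nu+\mu)}{\mu+1}.
\]
For odd $p$ we have $2(\nu+\mu)=k(N-k-p)+p-1$ and $2(\mu+1)=p+1$. So $\binom{\nu+\mu}{\mu+1}^{1/(p+1)}\le\sqrt{e(k(N-k-p)+p-1)/(p+1)}$, which gives the simplified $C_{N,k,p,\delta}$. Any larger constant is admissible, because the right-hand side of \cref{eq:scalar-bd} is increasing in $C$.
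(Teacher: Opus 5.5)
Your Steps 1--3, your odd-$p$ case, and your derivation of the explicit constant follow the paper's proof. It too reduces to $\Phi_{\Sigma,k,p}(\theta)=\cJ_{N,k,p}(sI_k)$, applies \cref{prop:J-scalar}, and chooses $\tan\theta_\delta=(b/a)[\delta^{-1}\binom{\nu+\mu}{\mu+1}]^{1/(2\mu+2)}$. Your diagnosis of even $p$ is correct, and the paper does not resolve it. It asserts that the right-hand side of \cref{eq:scalar-bd} upper bounds $\sin\theta_\delta$. That requires $[\,\cdot\,]^{1/(p+1)}\ge[\,\cdot\,]^{1/(2\mu+2)}$, which is false for even $p\ge2$ (where $2\mu+2=p$) whenever $\delta^{-1}\binom{\nu+\mu}{\mu+1}>1$. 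Likewise, $\binom{n}{m}<(en/m)^m$ produces the ``in particular'' constant only when $2(\mu+1)=p+1$, so your restriction to odd $p$ there is necessary.

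Neither of your proposed repairs can close the gap, because for even $p\ge2$ the statement is false. Take $k=1$ and partition $\Omega=[\Omega_1;\Omega_2]$ as in the proof of \cref{thm:cdf}. Then
\[
\tan^2\theta_1=(b/a)^2\min_{\Omega_1c=1}\|\Omega_2c\|^2=(b/a)^2/(\omega^\top W^{-1}\omega),
\]
with $\omega=\Omega_1^\top\sim\cN(0,I_{p+1})$ and $W=\Omega_2^\top\Omega_2$ Wishart with $N-1$ degrees of freedom. The classical fact that $\omega^\top\omega/\omega^\top W^{-1}\omega\sim\chi^2_{N-1-p}$, independently of $\omega$, gives $\tan^2\theta_1=(b/a)^2Z/Y$ with $Z\sim\chi^2_{2\nu}$ and $Y\sim\chi^2_{p+1}$ independent. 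Since $\P(Y<t)\le t^{(p+1)/2}/(2^{(p+1)/2}\Gamma(\frac{p+3}2))$, with equality to leading order as $t\downarrow0$, dominated convergence yields
\[
\lim_{y\to\infty}y^{p+1}\,\P\bigl(\tan\theta_1>(b/a)\,y\bigr)=\frac{\Gamma(\nu+\frac{p+1}2)}{\Gamma(\nu)\,\Gamma(\frac{p+3}2)} .
\]
For odd $p$ this equals $\binom{\nu+\mu}{\mu+1}$, so for $k=1$ the corollary is asymptotically sharp. For even $p$, the corollary (with $\delta=\binom{\nu+\mu}{\mu+1}y^{-(p+1)}$) would force this limit to be at most $\binom{\nu+\mu}{\mu+1}=\Gamma(\nu+\frac p2)/(\Gamma(\nu)\Gamma(\frac p2+1))$, which fails for large $\nu$. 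For example, $p=2$ and $N=7$ give $\nu=2$ and a limit of $\frac52>2$, so $\P(\tan\theta_1>(b/a)(2/\delta)^{1/3})\approx\frac54\delta>\delta$ for small $\delta$.

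What can actually be proved for even $p$ is your second attempt: exponent $1/p$ with $\nu'=\frac{k(N-k-p+1)}2$, via \cref{lem:p-monotone} applied to the true CDF. The exact $k=1$ bound above suggests that the right even-$p$ constant is $\binom{\nu+\frac{p-1}2}{\frac{p+1}2}$.

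Your Step 1 is also unsafe for even $p$. For $k=1$ and $a=\frac{1-p}2$ a half-integer, $\ttFo(a,b;a;x)=(1-x)^{-b}$, so $\cJ_{N,1,p}\equiv1$, which is not the CDF. Your $p=0$ case rests on that identity, and the corollary does fail at $k=1$, $N=2$, $p=0$. There $\tan\theta_1=(b/a)|g_2/g_1|$ with $g_1,g_2$ independent standard Gaussians, which is half-Cauchy, and $\frac2\pi\arctan(2\delta)>\delta$ for $0<\delta<\frac12$.
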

\begin{proof}
When $\Sigma_1,\Sigma_2$ are scalar, the integrand of~\cref{eq:cdf} no longer depends on $Q_1,H_1$, so we have 
\[
	\Phi_{\Sigma,k,p}(\theta) = \cJ_{N,k,p}\left(sI_k\right) \ge 1-\binom{\nu+\mu}{\mu+1}(1-s)^{\mu+1}
\]
where $s=\frac{1}{1+\cot^2(\theta)(b/a)^2}$. By selecting $\theta_\delta$ such that $\Phi_{\Sigma,k,p}(\theta_\delta)\ge1-\delta$, we guarantee that $\theta_1\le\theta_\delta$ with probability $\ge1-\delta$. It suffices to choose
\[
	\theta_\delta = \arctan\left(\frac ba\sqrt{\delta^{-1/(\mu+1)}\binom{\nu+\mu}{\mu+1}^{1/(\mu+1)}}\right),
\]
such that the right-hand side of~\cref{eq:scalar-bd} upper bounds $\sin(\theta_\delta)$, as desired. Since right side of~\cref{eq:scalar-bd} is increasing in $C_{N,k,p,\delta}$, we obtain the simpler estimate by upper bounding $C_{N,k,p,\delta}$ using the binomial coefficient estimate $\binom{n}{k}<(\frac{en}{k})^k$. 
\end{proof}

\subsection{Probability estimates in terms of the Frobenius singular value ratio}
Using the tools from~\cref{ss:auxiliary}, we now set out to prove our main bound,~\cref{thm:Fgap-bound} below, on the RRF's subspace approximation error in terms of the Frobenius singular value ratio $\xi_k$. As an outline of the proof, we begin by representing $\Phi_{\Sigma,k,p}(\theta)$ of~\cref{eq:cdf} as a mixture of CDF's of certain beta-distribution random matrices. We exploit a convexity property of the density function of these beta random matrices in certain parameter regions in conjunction with a generalized version of the semi-classical Anderson's theorem on monotonicity properties of symmetric unimodal distributions to bound $\Phi_{\Sigma,k,p}(\theta)$ from below by a quantity that is easy to work with; from there elementary inequalities, in addition to the following~\cref{lem:Etr-eq}, yield~\cref{eq:Fgap-bound}.

\begin{lemma}\label{lem:Etr-eq}
Set $k\le n$. For fixed $X\in\cS_k^+$, $Y\in\cS_n^+$ and $H_1\sim\Unif(\St_{n,k})$, we have $\E[\tr(XH_1^\top YH_1)] = \tfrac1n\tr(X)\tr(Y)$, where $\tr(\cdot)$ denotes the trace.
\end{lemma}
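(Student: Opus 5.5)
The plan is to reduce the identity to the first moment of the random projection $P:=H_1H_1^\top$ and compute that moment by orthogonal invariance. By cyclicity and linearity of the trace,
\[
	\E[\tr(XH_1^\top YH_1)] = \E[\tr(H_1XH_1^\top Y)] = \tr\big(\E[H_1XH_1^\top]\,Y\big),
\]
so it suffices to show that $M:=\E[H_1XH_1^\top]\in\R^{n\times n}$ equals $\tfrac1n\tr(X)I_n$. The interchange of expectation and trace is harmless because $H_1$ ranges over the compact manifold $\St_{n,k}$ and the integrand is a polynomial in its entries.

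The key step is to use the left invariance of the normalized Haar measure on $\St_{n,k}$ from~\cref{s:background}: $QH_1$ has the same distribution as $H_1$ for every $Q\in\Or_n$. Then $QMQ^\top = \E[QH_1XH_1^\top Q^\top] = M$ for all $Q\in\Or_n$. A symmetric matrix commuting with every orthogonal matrix is a scalar multiple of the identity. One way to see this is to take $Q$ to be reflections $I-2ee^\top$, which forces every unit vector $e$ to be an eigenvector. Hence $M=cI_n$.

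To find $c$, take traces: $nc=\tr M = \E[\tr(XH_1^\top H_1)] = \tr(X)$, since $H_1^\top H_1=I_k$. Therefore $c=\tr(X)/n$, and substituting back gives $\E[\tr(XH_1^\top YH_1)]=\tfrac1n\tr(X)\tr(Y)$. The positive semidefiniteness of $X$ and $Y$ is not needed; the identity holds for arbitrary square $X,Y$.

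I expect no real obstacle. The only point needing care is justifying $QMQ^\top=M\Rightarrow M=cI$, which the reflection argument handles. Alternatively, one could realize $H_1$ as the orthonormalized Gaussian $G(G^\top G)^{-1/2}$ with $G\sim\cN_{n,k}(0,1)$ and argue through exchangeability of the rows. The invariance route is shorter.
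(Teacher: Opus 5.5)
Your proof is correct, but it takes a different route from the paper's. The paper proves the lemma in one line. It specializes the mean-value property of zonal polynomials, $\int_{\Or_n} C_\kappa(XHYH^\top)\,(\dd H)=C_\kappa(X)C_\kappa(Y)/C_\kappa(I_n)$ from \cite[Thm.~7.2.5]{Muirhead1982}, together with its Stiefel-manifold, different-size version \cite[Thm.~1]{Shimizu2021}, to the partition $\kappa=(1)$, where $C_{(1)}=\tr$ and $C_{(1)}(I_n)=n$.

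You instead prove the stronger matrix identity $\E[H_1XH_1^\top]=\tfrac1n\tr(X)I_n$ directly. You use only left invariance of the Haar measure and the fact that only scalar matrices commute with all of $\Or_n$, and then fix the scalar by taking a trace.

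Your route is self-contained and needs no zonal-polynomial machinery or heterogeneous-size extension. The reflection step $ee^\top M=Mee^\top\Rightarrow Me=(e^\top Me)e$ never uses symmetry of $M$, so the word ``symmetric'' in that step can be dropped. That is exactly what justifies your closing remark that the identity holds for arbitrary square $X,Y$.

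The paper's route fits the zonal-polynomial framework used elsewhere in the paper. It also gives all higher moments $\E[C_\kappa(XH_1^\top YH_1)]$ at once, which a first-moment invariance argument does not give without further representation-theoretic work.
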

\begin{proof}
Since $C_{(1)}(X)=\tr(X)$, the inequality immediately follows from the mean-value property of zonal polynomials; see~\cite[Thm.~7.2.5]{Muirhead1982} and~\cite[Thm.~1]{Shimizu2021}. 
\end{proof}

We are now ready to present our main result in this section: a probabilistic error estimate for the RRF in terms of the Frobenius singular value ratio~\cref{eq:Fgap}.

\begin{theorem}\label{thm:Fgap-bound}
Assume the hypotheses of~\cref{thm:cdf}. Then with probability $\ge1-\delta$,
\begin{equation}\label{eq:Fgap-bound}
\sin\theta_1 \le \frac{\xi_k C_{N,k,\delta}}{\sqrt{1+\xi_k^2C_{N,k,\delta}^2}},
\end{equation}
where $C_{N,k,\delta}=\sqrt{\frac{k(N-k-1)}{2\delta}}$ and $\xi_k$ is the Frobenius singular value ratio from~\cref{eq:Fgap}.
\end{theorem}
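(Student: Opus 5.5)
The plan is to reduce to the case $p=0$, show that the CDF integrand $\cJ_{N,k,0}$ admits a global lower bound that is linear in $\tr(I-S_\theta)$, and then average over $Q_1,H_1$ using \cref{lem:Etr-eq}. Since $C_{N,k,\delta}$ does not depend on $p$, \cref{lem:p-monotone} gives $\Phi_{\Sigma,k,p}(\theta)\ge\Phi_{\Sigma,k,0}(\theta)$, so it suffices to prove the bound for $p=0$. We may also assume $r\ge 2k$ (so $P_\perp=I_k$); the remaining rank-deficient cases should follow from the same continuity argument in $\Sigma$ used in the proof of \cref{thm:cdf}.

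For $p=0$, $Q_1\in\Or_k$, so $(Q_1^\top(H_1^\top\Sigma_2^2H_1)^{-1}Q_1)^{-1}=Q_1^\top H_1^\top\Sigma_2^2H_1Q_1=:G$. Then \cref{eq:S} reads $S_\theta=(I+\cot^2(\theta)T)^{-1}$ with $T=\Sigma_1^{-2}G$, where $T$ is similar to a positive semidefinite matrix. Hence
\[
I-S_\theta=\cot^2(\theta)\,T(I+\cot^2(\theta)T)^{-1},\qquad \tr(I-S_\theta)\le\cot^2(\theta)\tr(\Sigma_1^{-2}G).
\]
Applying \cref{lem:Etr-eq} twice, first over $H_1\sim\Unif(\St_{N-k,k})$ and then noting that $Q_1$ is Haar on $\Or_k$ and drops out of the trace, gives
\[
\E\tr(\Sigma_1^{-2}G)=\frac{\|\Sigma_1^{-1}\|_\Fr^2\|\Sigma_2\|_\Fr^2}{N-k}=k\,\xi_k^2.
\]

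The key analytic step is a matrix analogue of \cref{prop:J-scalar} for $p=0$:
\begin{equation*}
\cJ_{N,k,0}(S)\ \ge\ 1-\tfrac{N-k-1}{2}\,\tr(I-S)\qquad\text{for all symmetric }0<S<1.
\end{equation*}
Granting this, \cref{eq:cdf} gives $\P(\theta_1<\theta)\ge1-\tfrac{N-k-1}{2}\,k\,\xi_k^2\cot^2(\theta)$. Choosing $\tan^2\theta=\xi_k^2C_{N,k,\delta}^2$ with $C_{N,k,\delta}^2=k(N-k-1)/(2\delta)$ makes the right-hand side equal to $1-\delta$, and $\sin\theta=\tan\theta/\sqrt{1+\tan^2\theta}$ yields \cref{eq:Fgap-bound}.

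The main obstacle is the linear lower bound for $\cJ_{N,k,0}$. A scalar argument will not suffice: \cref{prop:J-scalar} gives only $1-\tfrac{k(N-k)}2(1-s)$ along $S=sI_k$, which has the right trace form but the constant $N-k$ in place of $N-k-1$, and it says nothing off the diagonal. My first attempt is to interpret $\cJ_{N,k,0}(S)$, via the proof of \cref{lem:terminate}, as $\P(B\le S)$, where $B$ is a beta-type matrix: the squared sines between a uniform $k$-frame and $I_{N,k}$, which is orthogonally invariant. I would then write the complement of $\{B\le S\}$ as a union over the eigendirections $e_j$ of $S$ of the events $\{e_j^\top Be_j>s_j\}$. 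Each $e_j^\top Be_j$ is distributed as the squared sine of the angle between a fixed line and a uniform $k$-dimensional subspace, which is $\mathrm{Beta}(\tfrac{N-k}2,\tfrac k2)$. Bounding its upper tail by a linear function of $1-s_j$, using convexity of that beta density near $1$, should give the factor $\tfrac{N-k-1}{2}$ per direction; summing over $j$ produces the trace. If the union bound loses too much, I would instead use the convexity/Anderson-type monotonicity route suggested in the outline: show $S\mapsto\cJ_{N,k,0}(S)$ is concave in the eigenvalues near $I$ and compare with its tangent plane at $S=I$, whose gradient is computed from the zonal series \cref{eq:pFq}.
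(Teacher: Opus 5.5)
Your first step, the reduction to $p=0$, is fatal. \cref{lem:p-monotone} does give $\Phi_{\Sigma,k,p}\ge\Phi_{\Sigma,k,0}$, but the $p=0$ bound you reduce to is false. So the key inequality $\cJ_{N,k,0}(S)\ge1-\frac{N-k-1}2\tr(I-S)$, read as a statement about the probability you need, cannot hold with any constant. Without oversampling, $\P(\theta_1\ge\theta)$ decays only like $\cot\theta$ as $\theta\to\pi/2$, not like $\cot^2\theta$.

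For a concrete example, take $k=1$, $p=0$, $\Sigma_1=1$, $\Sigma_2=\rho I_{N-1}$ with $0<\rho<1$. Then $\xi_1=\rho$ and $\tan\theta_1=\rho\|\omega_2\|/|\omega_1|$, with $\omega_1\sim\cN(0,1)$ and $\omega_2\sim\cN(0,I_{N-1})$ independent. Hence $\P(\tan\theta_1>\rho\,C_{N,1,\delta})=\P(|\omega_1|<\|\omega_2\|/C_{N,1,\delta})\approx2\sqrt{\delta/\pi}\gg\delta$, which is about $0.25$ for $N=100$, $\delta=0.05$.

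Equivalently, here $\P(\theta_1<\theta)=F(S_\theta)$ with $F$ the $\mathrm{Beta}(\tfrac{N-1}2,\tfrac12)$ CDF (cf.\ \cref{eq:heuristic}). Since $1-F(s)\asymp\sqrt{1-s}$ as $s\uparrow1$, no minorant of the form $1-c(1-s)$ exists. For general $k$ the same $1/t$ tail follows from $\tan\theta_1\ge\|\Sigma_2\Omega_2w\|/\sigma_{\min}(\Sigma_1\Omega_1)$, where $w$ is a right singular vector of $\Sigma_1\Omega_1$ for its smallest singular value.

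Your own per-direction heuristic already exposes this: $\mathrm{Beta}(\tfrac{N-1}2,\tfrac12)$ has density $\propto(1-x)^{-1/2}$ near $1$, so its upper tail is not linear in $1-s$. The union-bound step also has a second defect. The event $\{B\not\le S\}$ contains $\bigcup_j\{e_j^\top Be_j>s_j\}$ but is not contained in it, because $S-B$ can lose positive semidefiniteness along a direction that is not an eigenvector of $S$. The union bound therefore controls the wrong event. The tangent-plane fallback fails for the same reason as the main argument: at $p=0$ the CDF has infinite slope at $S=I$.

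The constant $N-k-1$ is the exponent of the $p=1$ integrand $\cJ_{N,k,1}(S)=|S|^{(N-k-1)/2}$, and one oversampling column is what the paper's argument really relies on. For $p\ge1$ it proceeds as follows:
\begin{enumerate}
\item It lifts $X_\theta$ to a $(k+p-1)$-dimensional beta type II problem whose density $\propto|I+\widehat V|^{-(N+p-1)/2}$ is log-convex.
\item It applies Das Gupta's Anderson-type theorem to replace $\widehat X_\theta$ by its trace average.
\item This lands on the smooth $p=1$ scalar CDF $(1+\tr X_\theta/(k+p-1))^{-(k+p-1)(N-k-1)/2}$, and Jensen plus \cref{lem:Etr-eq} finish.
\end{enumerate}
The paper's short $p=0$ paragraph falls to the example above as well. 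At $p=0$ the reflected expression \cref{eq:J} is not the CDF; for $k=1$ it is identically $1$. So neither that paragraph nor the $p=0$ case of \cref{prop:J-scalar} that you quote says anything about $\theta_1$, and the theorem should be read for $p\ge1$.

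Your framework can be salvaged by reducing to $p=1$ instead of $p=0$:
\begin{itemize}
\item The CDF becomes $\Phi_{\Sigma,k,1}(\theta)=\E\,|I+\cot^2(\theta)T|^{-(N-k-1)/2}$, with $T=\Sigma_1^{-2}(Q_1^\top M^{-1}Q_1)^{-1}$ and $M=H_1^\top\Sigma_2^2H_1$.
\item The elementary bounds $\log(1+x)\le x$ and $e^{-y}\ge1-y$ give $|I+\cot^2(\theta)T|^{-(N-k-1)/2}\ge1-\tfrac{N-k-1}2\cot^2(\theta)\tr T$.
\item The Schur-complement inequality $(Q_1^\top M^{-1}Q_1)^{-1}\preceq Q_1^\top MQ_1$, plus two uses of \cref{lem:Etr-eq}, gives $\E\tr T\le k\xi_k^2$.
\end{itemize}
Your choice $\tan^2\theta=\xi_k^2C_{N,k,\delta}^2$ then yields the theorem for $p\ge1$, without the Das Gupta step.
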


\begin{proof}
Without loss of generality, we assume $r\ge 2k+p$ so that $S_\theta$ of~\cref{eq:S} takes the form~\cref{eq:tilde-S}; the case with $r<2k+p$ follows from a continuity argument as in the proof of~\cref{thm:cdf}. We begin with the observation that $\Phi_{\Sigma,k,p}(\theta)$ can be expressed as the CDF\ of a certain mixture distribution. Namely, in the notation of~\cref{eq:S} and~\cref{eq:J}, we have
\begin{equation}\label{eq:beta-rep}
	\Phi_{\Sigma,k,p}(\theta) = \E_{Q_1,H_1}[\cJ_{N,k,p}(S_\theta)] = \E_{Q_1,H_1}[\P_U(S_\theta > U)],
\end{equation}
where $U\sim\beta_k^\mathrm{I}(\tfrac{N-k-p}2,\tfrac{k+p}2)$ has the matrix-variate beta type I distribution of~\cite[Def.~5.2.1]{Gupta2000}, and is independent of $Q_1,H_1$. The representation~\cref{eq:beta-rep} can be readily verified using the formula for the CDF\ of $U$~\cite[p.~166]{Gupta2000} in addition to the inversion formula for $\tFo$,~\cite{Richards2024}; see also~\cref{eq:tilde-cdf}. Now the map $x\mapsto 1-\tfrac1x$ is operator decreasing on $[0,1]$, so by~\cite[Thms.~5.3.6 and 5.3.7]{Gupta2000}, we have
\[
	\Phi_{\Sigma,k,p}(\theta) = \E_{Q_1,H_1}[\P_V(X_\theta<V)] = \E_{Q_1,H_1}\left[\int_{V>0}\phi(V)\,\1\{X_\theta < V\}\,(\dd V)\right],
\]
where $X_\theta := \cot^2(\theta)\Sigma_1^{-2}(Q_1^\top(H_1^\top\Sigma_2^2H_1)^{-1}Q_1)^{-1}$ and $V\sim\beta_k^{\mathrm{II}}(\tfrac{k+p}2,\tfrac{N-k-p}2)$ has the matrix-variate beta type II distribution with density function~\cite[Def.~5.2.2]{Gupta2000}\footnote{\,This distribution is, confusingly, also sometimes referred to as the MANOVA ensemble. Another name for it is the matrix $F$-distribution~\cite{Mulder2018}.}
\begin{equation}\label{eq:beta-ii}
	\phi(V) = \frac{1}{\Beta_k(\tfrac{k+p}2,\tfrac{N-k-p}2)}|V|^{\tfrac{p-1}2}|I+V|^{-\tfrac N2}.
\end{equation}

Assume for now that $p\ge1$. We define auxiliary matrices $\widehat X_\theta\in\cS_{k+p-1}^+$ and $\widehat V\sim\beta_{k+p-1}^{\mathrm{II}}(\tfrac{k+p}2,\tfrac{N-k-1}2)$, independent of $V$, which we conformally partition into $k\times k$ and $(p-1)\times (p-1)$ diagonal blocks:
\[
	\widehat V = \begin{bmatrix}\widehat V_{11} & \widehat V_{12} \\ \widehat V_{21} & \widehat V_{22}\end{bmatrix}, \qquad \widehat X_\theta = \begin{bmatrix} X_\theta & 0 \\ 0 & 0 \end{bmatrix}.
\]
First, $\widehat X_\theta<\widehat V$ implies $X_\theta<\widehat V_{11}$, since all diagonal blocks of positive definite matrices must also be positive definite. Secondly, by~\cite[Thm.~5.3.11]{Gupta2000}, $\widehat V_{11}$ and $V$ are identically distributed. It follows that for fixed $X_\theta$, we have $\P_V(X_\theta<V)\ge\P_{\widehat V}(\widehat X_\theta<\widehat V)$.

Next, by~\cref{eq:beta-ii}, the density function of $\widehat V$ is given by
\[
	\phi(\widehat V) = \frac{1}{\Beta_k(\tfrac{k+p}2,\tfrac{N-k-1}2)}|I_{k+p-1}+\widehat V|^{-\tfrac{N+p-1}2}, \qquad \widehat V>0.
\]
It is well-known that the determinant is log-concave on $\cS_k^{++}$~\cite{Boyd2004}, so $\phi(\widehat V)$ is log-convex and thus also quasiconvex, i.e., its lower level sets are convex. On the other hand $\{\widehat X_\theta<\widehat V\}$ is jointly convex in $(\widehat X_\theta,\widehat V)\in\cS_{k+p-1}^+\times\cS_{k+p-1}^+$, so the indicator function $\1\{\widehat X_\theta<\widehat V\}$ is trivially quasiconcave. Finally, observe that both $\phi(\widehat V)$ and $\1\{\widehat X_\theta>\widehat V\}$ are invariant under the action of $\Or_{k+p-1}$, in the sense that
\[
	\phi(R^\top\widehat VR) = \phi(\widehat V),\qquad \1\{R^\top \widehat X_\theta R<R^\top\widehat VR\} = \1\{\widehat X_\theta<\widehat V\}
\]
for any $R\in\Or_{k+p-1}$. This group action is free and Lebesgue measure-preserving, and $\Or_{k+p-1}$ can be viewed as a subgroup of $\Or_{k+p-1}\times\Or_{k+p-1}$ which acts on $(\widehat X_\theta,\widehat V)$ component-wise by conjugation. As a result, we may apply~\cite[Thm.~1]{DasGupta1976} to 
\[
	\P(\widehat X_\theta<\widehat V) = \int_{\widehat V>0}\phi(\widehat V)\1\{\widehat X_\theta<\widehat V\}(\dd \widehat V),
\]
assuming $\widehat X_\theta$ is fixed, and conclude that $\P(\widehat X_\theta<\widehat V)$ decreases whenever $\widehat X_\theta$ is replaced with any symmetric $(k+p-1)\times(k+p-1)$ matrix that lies within the convex hull of its orbit with respect to orthogonal conjugation. The convex hull contains all diagonal matrices of permutations of the eigenvalues of $\widehat X_\theta$, so in particular it contains the scalar matrix where all diagonal elements are $\tfrac1{k+p-1}\tr(\widehat X_\theta)$, the arithmetic mean of the eigenvalues. Moreover, by construction we have $\tr(\widehat X_\theta)=\tr(X_\theta)$. In conjunction with~\cref{eq:beta-rep}, we have therefore shown that 
\[
	\Phi_{\Sigma,k,p}(\theta) \ge \E_{Q_1,H_1}\left[\cJ_{N+p-1,k+p-1,1}\left(\frac{1}{1+\tfrac1{k+p-1}\tr(X_\theta)}I_{k+p-1}\right)\right].
\]
Note by definition that $\cJ_{N,k,1}(S) = |S|^{(N-k-1)/2}$, hence
\begin{equation}\label{eq:Fgap-0}
	\Phi_{\Sigma,k,p}(\theta) \ge \E_{Q_1,H_1}\left[\left(\frac{1}{1+\tfrac1{k+p-1}\tr(X_\theta)}\right)^{\tfrac{(k+p-1)(N-k-1)}2}\right].
\end{equation}

We make two remarks. First, the integrand in~\cref{eq:Fgap-0} is convex as a function of $\tr(X_\theta)$, so by Jensen's inequality we can move the expectation operator to the denominator of the fraction. Second, the operator convexity of the map $x\mapsto1/x$ on $(0,\infty)$ implies $\lambda^\downarrow((Q_1^\top(H_1^\top\Sigma_2^2H_1)^{-1}Q_1)^{-1}) \le \lambda^\downarrow(Q_1^\top H_1^\top\Sigma_2^2H_1Q_1),$ for each $Q_1\in\St_{k+p,k},H_1\in\St_{N-k,k+p}$, by~\cite[Thm.~V.2.1]{Bhatia1997}. Two applications of~\cref{lem:Etr-eq} yield $\E[\tr(X_\theta)] \le \tfrac1{N-k}\cot^2(\theta)\tr(\Sigma_1^{-2})\tr(\Sigma_2^2)$.
Altogether, we obtain
\begin{multline*}
	\Phi_{\Sigma,k,p}(\theta) \!\ge\! \left(1\!+\!C\cot^2(\theta)\right)^{\!-\tfrac{(k+p-1)(N-k-1)}2} \\
	= \cJ_{N+p-1,k+p-1,1}\!\!\left(\!\frac{1}{1+C\cot^2(\theta)}I_{k+p-1}\!\right)\!\!,
\end{multline*}
where
$C=\frac{\tr(\Sigma_1^{-2})\tr(\Sigma_2^2)}{(k+p-1)(N-k)}$. To conclude, we apply~\cref{cor:scalar-bd}, substituting $N+p-1$, $k+p-1$, $1$, $\sqrt{(k+p-1)/\tr(\Sigma_1^{-2})}$, and $\sqrt{\tr(\Sigma_2^2)/(N-k)}$ for $N$, $k$, $p$, $a$, and $b$ in the statement of~\cref{cor:scalar-bd}, respectively. Note that $\tr(\Sigma_1^{-2})=\|\Sigma_1^{-1}\|_\Fr^2$, $\tr(\Sigma_2^2)=\|\Sigma_2\|_\Fr^2$, hence $b/a = \xi_k\sqrt{k/(k+p-1)}$. This completes the proof for $p\ge1$.

If $p=0$, on the other hand, the argument is more direct. In this setting, without defining auxiliary matrices $\widehat X_\theta,\widehat V$, the density $\phi(V)$ of~\cref{eq:beta-ii} is already log-convex, so we may proceed as before to obtain
\[
	\Phi_{\Sigma,k,0}(\theta) \ge \E_{Q_1,H_1}\left[\cJ_{N,k,0}\left(\frac{1}{1+\tfrac1k\tr(X_\theta)}I_k\right)\right]. 
\]
Moreover, observe that $\ttFo(\tfrac12,\tfrac{N-k}2;\tfrac12,I-S)$ has non-negative coefficients in its zonal series expansion and thus is bounded below by $1$. Thus, $\Phi_{\Sigma,k,0}(\theta) \ge \E_{Q_1,H_1}[(1+\tfrac1k\tr(X_\theta))^{-k(N-k)/2}]$, analogous to~\cref{eq:Fgap-0} above. We proceed in the exact same way.
\end{proof}

\subsection{Worst-case gap analysis}
A second interesting consequence of~\cref{thm:cdf} is that the RRF is more accurate, in the sense of stochastic dominance, for matrices whose leading singular values are larger and whose tail singular values are smaller (see~\cref{thm:monotone} below). While this may seem intuitive, we emphasize that the stochastic dominance aspect of our claim is very strong. It implies that out of the entire family of matrices $\cA$ whose singular value gap at index $k$ is prescribed, namely,
\[
	\cA = \left\{A\in\R^{m\times n}:\frac{\sigma_{k+1}(A)}{\sigma_k(A)}=\rho\right\}, \qquad \rho\in[0,1]\text{ fixed},
\]
the RRF performs the worst when the leading and tail singular values have a flat profile, namely, when $\sigma_1=\cdots=\sigma_k$ and $\sigma_{k+1}=\cdots=\sigma_n$. In other words, existing probabilistic estimates that depend only on the singular value gap $\rho_k$ are worst-case estimates.

\begin{theorem}\label{thm:monotone}
Assume the hypotheses of~\cref{thm:cdf}. For some $\widehat\sigma_1\ge\dots\ge\widehat\sigma_N\ge0$ and $\sigma_1\ge\dots\ge\sigma_N\ge0$, set  $\Sigma=\diag(\sigma_1,\dots,\sigma_N)$ and $\widehat\Sigma = \diag(\widehat\sigma_1,\dots,\widehat\sigma_N)$. If $\widehat\sigma_j\ge\sigma_j$ for $1\le j\le k$, and $\widehat\sigma_j\le\sigma_j$ for $j>k$, then $\Phi_{\Sigma,k,p}(\theta)\le\Phi_{\widehat\Sigma,k,p}(\theta)$ for all $0\le\theta\le\tfrac\pi2$.
\end{theorem}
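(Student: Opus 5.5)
The cleanest route is probably a direct coupling argument rather than the formula~\cref{eq:cdf}: show that $\theta_1$ computed from $\widehat\Sigma$ is pointwise no larger than $\theta_1$ computed from $\Sigma$ for the same Gaussian $\Omega$. Stochastic dominance then gives $\Phi_{\Sigma,k,p}(\theta)\le\Phi_{\widehat\Sigma,k,p}(\theta)$, in the same way as in the proof of~\cref{lem:p-monotone}. As in the proof of~\cref{thm:cdf}, we reduce to $A=\Sigma$ diagonal $N\times N$ and $\Omega\sim\cN_{N,k+p}(0,1)$ partitioned as $\Omega_1,\Omega_2$. We may assume $\Sigma_1$ and $\widehat\Sigma_1$ are nonsingular and $\Omega^\top\Sigma^2\Omega$ is invertible; the remaining cases follow by the continuity argument already used in~\cref{thm:cdf}. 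If $\Sigma_1$ is singular, the hypothesis $\widehat\sigma_j\ge\sigma_j$ lets us perturb $\Sigma_1$ upward, which only helps after a limit.

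The key formula is a pointwise characterization of $\cos^2\theta_1$. The squared cosines of the principal angles are the nonzero eigenvalues of $C^\top C$ in~\cref{eq:C'C}. Equivalently, they are the eigenvalues of $CC^\top=\Sigma_1\Omega_1(\Omega^\top\Sigma^2\Omega)^{-1}\Omega_1^\top\Sigma_1$, which is $k\times k$ and almost surely invertible. Hence $\theta_1<\theta$ holds exactly when
\[
\Sigma_1\Omega_1(\Omega_1^\top\Sigma_1^2\Omega_1+\Omega_2^\top\Sigma_2^2\Omega_2)^{-1}\Omega_1^\top\Sigma_1 > \cos^2(\theta) I_k .
\]
Put $M=\Omega_1^\top\Sigma_1^2\Omega_1$ and $T=\Omega_2^\top\Sigma_2^2\Omega_2$. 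The monotonicity then splits into two independent steps.

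\textbf{Tail step.} Decreasing $\Sigma_2$ to $\widehat\Sigma_2$ decreases $T$ in the L\"owner order, since $\Omega_2^\top(\Sigma_2^2-\widehat\Sigma_2^2)\Omega_2\ge0$. Because $x\mapsto x^{-1}$ is operator decreasing, $(M+T)^{-1}$ increases. Conjugating by $\Sigma_1\Omega_1$ preserves the L\"owner order, so $CC^\top$ increases. By Weyl monotonicity its smallest eigenvalue $\cos^2\theta_1$ increases.

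\textbf{Leading step.} Write $G=\Sigma_1\Omega_1$, so that $CC^\top=G(G^\top G+T)^{-1}G^\top$. When $G$ has full row rank $k$ and $T>0$, the push-through identity gives
\[
G(G^\top G+T)^{-1}G^\top=I-(I+GT^{-1}G^\top)^{-1}.
\]
This is an operator-increasing function of $GT^{-1}G^\top=\Sigma_1\Omega_1T^{-1}\Omega_1^\top\Sigma_1$. The main obstacle is that $\Sigma_1\mapsto\Sigma_1 B\Sigma_1$ is not L\"owner monotone in general, even for diagonal $\Sigma_1$ and $B>0$.

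The fix is to work with the inverse instead. Equivalently,
\[
\tan^2\theta_1=\lambda_{\max}\big((GT^{-1}G^\top)^{-1}\big)=\lambda_{\max}\big(\Sigma_1^{-1}(\Omega_1T^{-1}\Omega_1^\top)^{-1}\Sigma_1^{-1}\big).
\]
The largest eigenvalue is a spectral norm, namely $\|B^{1/2}\Sigma_1^{-1}\|^2$ with $B=(\Omega_1T^{-1}\Omega_1^\top)^{-1}$. Replacing $\Sigma_1^{-1}$ by $\widehat\Sigma_1^{-1}=\Sigma_1^{-1}D$, where $D=\Sigma_1\widehat\Sigma_1^{-1}$ is diagonal with entries in $[0,1]$, gives $\|B^{1/2}\Sigma_1^{-1}D\|\le\|B^{1/2}\Sigma_1^{-1}\|\|D\|\le\|B^{1/2}\Sigma_1^{-1}\|$. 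So the spectral-norm submultiplicativity resolves the obstacle, because we need only the largest eigenvalue, not the full L\"owner order.

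Combining the two steps, first changing $\Sigma_2$ and then $\Sigma_1$ (or the reverse), gives $\widehat\theta_1\le\theta_1$ almost surely. The tail step can also be phrased directly in this $\tan^2$ form, since a smaller $T$ gives a larger $\Omega_1T^{-1}\Omega_1^\top$ and hence a smaller $B$. Stochastic dominance then gives the claim for every $0\le\theta\le\pi/2$. If $T$ is singular (the low-rank regime) or $p$ forces rank issues, we perturb $\Sigma_2$ by $\epsilon I$, run the argument, and let $\epsilon\to0$ using the continuity of the CDF in $\Sigma$ established in the proof of~\cref{thm:cdf}.
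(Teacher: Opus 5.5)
Your proof is correct, and it takes a genuinely different route from the paper's. The paper works through the exact formula~\cref{eq:cdf}. For fixed $Q_1,H_1$ it asserts that the eigenvalues of $S_\theta$ increase when $\Sigma_1$ grows or $\Sigma_2$ shrinks, and then uses that $\cJ_{N,k,p}$ is increasing in the eigenvalues of its argument (\cref{lem:terminate}). Dominance is therefore obtained only in distribution. You never use the formula: you couple both models on the same Gaussian $\Omega$ and show $\widehat\theta_1\le\theta_1$ sample by sample. The tools are operator monotonicity of the inverse for the tail block and submultiplicativity of the spectral norm for the leading block.

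Your route buys several things. It is elementary and independent of the hypergeometric machinery, including \cref{lem:terminate}, whose monotonicity claim is itself deduced from the CDF formula. It gives almost-sure rather than merely distributional dominance. And since the comparison is carried out for each fixed $\Omega$, it extends to other sketching distributions as long as the singular vectors are held fixed.

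It also addresses directly a subtlety hidden in the paper's ``it is easy to see''. For fixed $B\ge0$ the map $\Sigma_1\mapsto\Sigma_1^{-1}B\Sigma_1^{-1}$ is not L\"owner monotone. The paper therefore really needs that each eigenvalue is monotone, which holds because $\lambda_i(\Sigma_1^{-1}B\Sigma_1^{-1})=\lambda_i(B^{1/2}\Sigma_1^{-2}B^{1/2})$. You only need the largest one. In exchange, the paper's version is three lines once \cref{thm:cdf} is in hand, and it stays within the formula-based framework of \cref{s:applications}.

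One small tightening: in the degenerate-rank cases, pass to the limit $\epsilon\to0$ pathwise rather than invoking the continuity argument from the proof of \cref{thm:cdf}. For fixed $\Omega$, $\cos^2\theta_1=\lambda_{\min}(CC^\top)$ is continuous in $\Sigma$ wherever $\Omega^\top\Sigma^2\Omega$ is invertible. If $\rank(\widehat\Sigma)\le k+p$, then $\widehat\theta_1=0$ almost surely and there is nothing to prove. This keeps your argument fully independent of the formula.
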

\begin{proof}
Set $\widehat\Sigma_1=\diag(\widehat\sigma_1,\dots,\widehat\sigma_k)$ and $\widehat\Sigma_2=\diag(\widehat\sigma_{k+1},\dots,\widehat\sigma_N)$. It is easy to see that for fixed $Q,Q_1,H_1$, the eigenvalues of $S_\theta$ of~\cref{eq:S} increase when $\Sigma_1$ is replaced with the larger $\widehat\Sigma_1$. The eigenvalues of $S_\theta$ similarly increase when $\Sigma_2$ is replaced with the smaller $\widehat\Sigma_2$. By~\cref{lem:terminate}, $\Phi_{\Sigma,k,p}(\theta)$ is pointwise increasing when the eigenvalues of $S_\theta$ increase.
\end{proof}

In such a worst-case regime can derive probabilistic bounds analogous to those of Saibaba~\cite[Thm.~6]{Saibaba2019} with a strictly better constant. We remark that compared to~\cref{thm:Fgap-bound}, the constant is much smaller as a function of $\delta$, since here $\delta$ is raised to the power of $-1/(p+1)$ instead of $-1/2$.

\begin{theorem}\label{thm:gap-bound}
Assume the hypotheses of~\cref{thm:cdf}. Then with probability $\ge1-\delta$,
\begin{equation}\label{eq:gap-bound}
\sin\theta_1 \le \frac{\rho_k C_{N,k,p,\delta}}{\sqrt{1+\rho_k^2C_{N,k,p,\delta}^2}},
\end{equation}
where $C_{N,k,p,\delta}$ is the constant from~\cref{cor:scalar-bd} and $\rho_k$ is the singular value ratio from~\cref{eq:gap}.
\end{theorem}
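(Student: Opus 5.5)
The plan is to reduce to the flat-profile case via \cref{thm:monotone} and then apply \cref{cor:scalar-bd}. Write $a:=\sigma_k$ and $b:=\sigma_{k+1}$, so that $\rho_k=b/a$. Define the comparison diagonal matrix $\widehat\Sigma$ by $\widehat\sigma_1=\dots=\widehat\sigma_k=a$ and $\widehat\sigma_{k+1}=\dots=\widehat\sigma_N=b$. Then $\widehat\sigma_j=\sigma_k\le\sigma_j$ for $j\le k$ and $\widehat\sigma_j=\sigma_{k+1}\ge\sigma_j$ for $j>k$. This is exactly the hypothesis of \cref{thm:monotone} with the roles of $\Sigma$ and $\widehat\Sigma$ interchanged. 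Hence $\Phi_{\widehat\Sigma,k,p}(\theta)\le\Phi_{\Sigma,k,p}(\theta)$ for all $0\le\theta\le\pi/2$, i.e.\ $\theta_1(\Sigma)$ is stochastically dominated by $\theta_1(\widehat\Sigma)$.

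Next we need \cref{thm:cdf} to apply nondegenerately to $\widehat\Sigma$. If $k+p\ge r$, then $\theta_1=0$ almost surely and the bound is trivial. Otherwise $\sigma_{k+1}>0$ (since $r>k+p\ge k+1$), so $b>0$, $a>0$, and $\widehat\Sigma$ has full rank $N\ge 2k+p$ whenever $N-k\ge k+p$. I would check whether the case $N<2k+p$ needs separate treatment. I expect it does not, since \cref{cor:scalar-bd} is stated under the hypotheses of \cref{thm:cdf}, which allow any rank greater than $k+p$. Applying \cref{cor:scalar-bd} with $\Sigma_1=aI_k$ and $\Sigma_2=bI_{N-k}$ gives a threshold $\theta_\delta$ with $\Phi_{\widehat\Sigma,k,p}(\theta_\delta)\ge 1-\delta$ and
\[
\sin\theta_\delta\le \frac{\rho_kC_{N,k,p,\delta}}{\sqrt{1+\rho_k^2C_{N,k,p,\delta}^2}}.
\]
Then $\P(\theta_1(\Sigma)<\theta_\delta)=\Phi_{\Sigma,k,p}(\theta_\delta)\ge\Phi_{\widehat\Sigma,k,p}(\theta_\delta)\ge1-\delta$, and the claim follows because $\sin$ is increasing on $[0,\pi/2]$.

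The main obstacle is the edge case $\sigma_k=\sigma_{k+1}$, where $\rho_k=1$. There the right-hand side of \eqref{eq:gap-bound} is still strictly less than $1$, so the bound is not vacuous, and the flat matrix $\widehat\Sigma$ is a multiple of the identity. The argument goes through unchanged because \cref{cor:scalar-bd} imposes no gap condition, but I would double-check that \cref{thm:monotone} and the SVD partition remain valid with ties. A second small point is that the stochastic comparison from \cref{thm:monotone} is in terms of the strict event $\{\theta_1<\theta\}$, which is exactly what is needed here.
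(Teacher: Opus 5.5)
Your proposal is correct and follows the paper's proof. Like the paper, you reduce to the flat singular value profile via \cref{thm:monotone}, applied with the roles of $\Sigma$ and $\widehat\Sigma$ interchanged, and then invoke \cref{cor:scalar-bd}. The only difference is cosmetic: the paper takes $\Sigma_1=I_k$, $\Sigma_2=\rho_k I_{N-k}$, tacitly rescaling $A$ by $\sigma_k$, whereas your choice $a=\sigma_k$, $b=\sigma_{k+1}$ meets the hypotheses of \cref{thm:monotone} directly without any rescaling.
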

\begin{proof}
Let $\rho\equiv\rho_k(A)$ be the singular value ratio of the matrix under consideration. It suffices to prove the statement instead for the matrix $A_\text{flat}$ with singular values $\Sigma_1=I_k$ and $\Sigma_2=\rho I_{N-k}$ since $A_\text{flat}$ is the worst-case matrix for singular subspace approximation given a singular value ratio of size $\rho$, by~\cref{thm:monotone}. But $A_\text{flat}$ immediately satisfies the conditions of~\cref{cor:scalar-bd}, and the result follows.
\end{proof}

\begin{figure}
\centering
\begin{minipage}{0.48\textwidth}
\begin{overpic}[width=\textwidth]{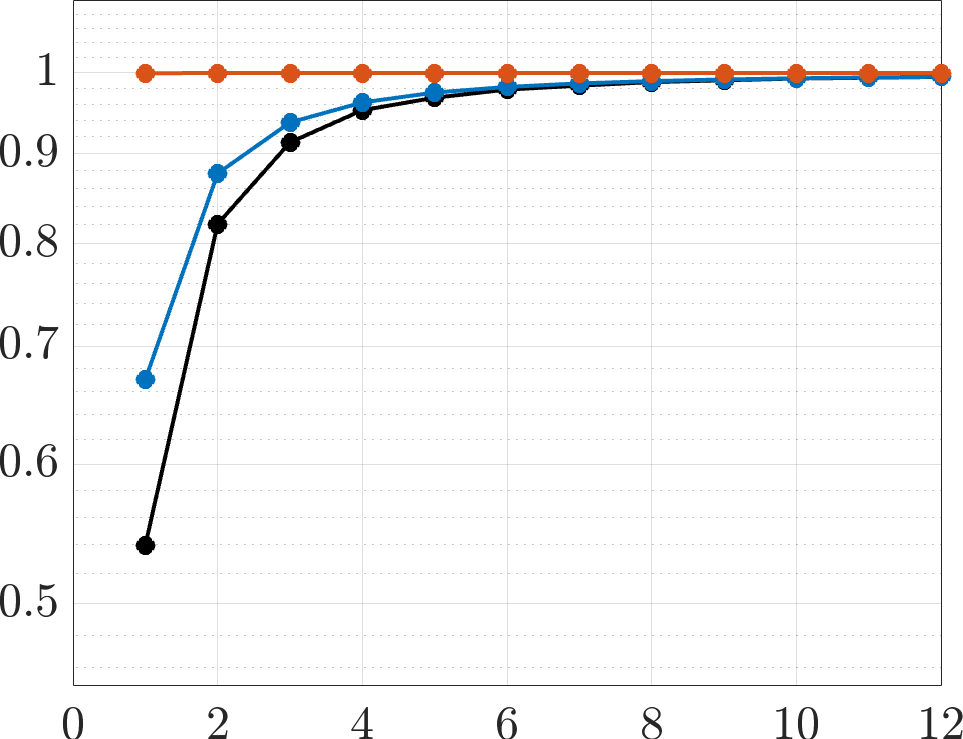}
\put(50,-5){\footnotesize $k$}
\put(20,71){\scriptsize\color{MLorange}Saibaba Thm 6}
\put(12.5,42){\rotatebox{72}{\scriptsize\color{MLblue}Thm 4.6}}
\put(17.5,25){\rotatebox{77}{\scriptsize empirical}}
\put(40,30){\parbox[l]{3cm}{\footnotesize$\sin\theta_1$ falls below black line 95\% of the time}}
\end{overpic}
\vspace*{0mm}
\end{minipage}
\hfill
\begin{minipage}{0.48\textwidth}
\begin{overpic}[width=\textwidth]{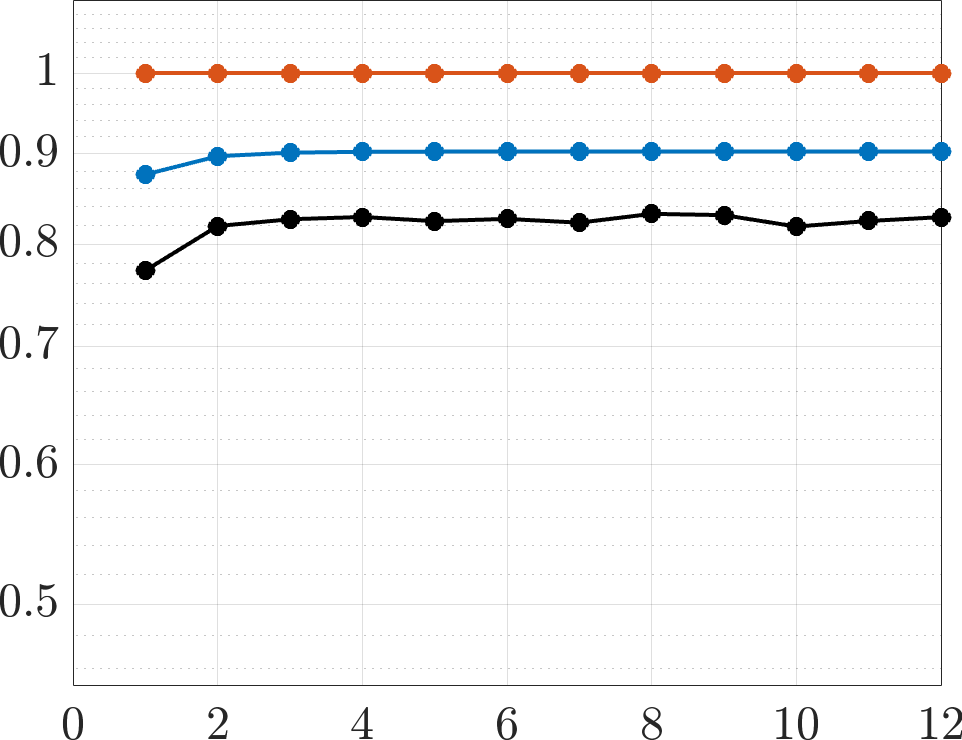}
\put(50,-5){\footnotesize $k$}
\put(20,71){\scriptsize\color{MLorange}Saibaba Thm 6}
\put(20,63){\scriptsize\color{MLblue}Thm 4.6}
\put(24,49.5){\scriptsize empirical}
\put(40,30){\parbox[l]{3cm}{\footnotesize$\sin\theta_1$ falls below black line 95\% of the time}}
\end{overpic}
\vspace*{0mm}
\end{minipage}
\\ \vspace{1mm}
\begin{minipage}{0.48\textwidth}
\begin{overpic}[width=\textwidth]{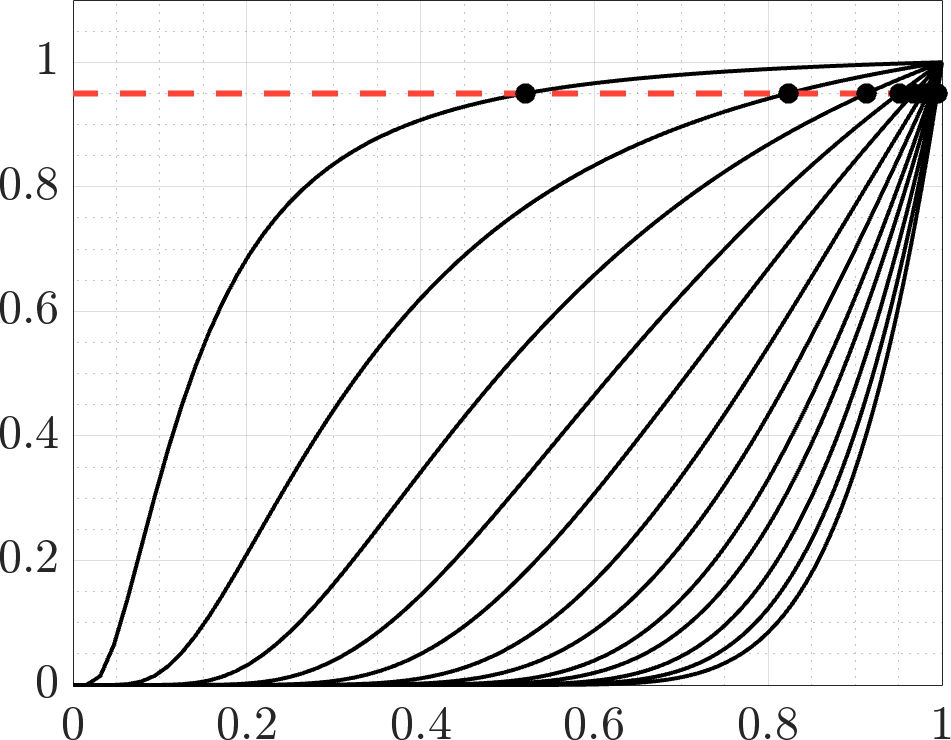}
\put(50,-5){\footnotesize $\sin\theta$}
\put(64.5,71.2){\rotatebox{3}{\scriptsize exact CDFs~\cref{eq:cdf}}}
\put(20,70){\scriptsize\color{myred}$95\%$}
\put(13,30){\rotatebox{73}{\scriptsize $k=1$}}
\put(27.5,30){\rotatebox{57}{\scriptsize $k=2$}}
\put(41.5,30){\rotatebox{50}{\scriptsize $k=3$}}
\put(53.5,30){\rotatebox{50}{\scriptsize $k=4$}}
\put(62,30){\rotatebox{52}{\scriptsize $k=5$}}
\put(89.5,22){\rotatebox{73}{\scriptsize $k=12$}}
\end{overpic}
\vspace*{0mm}
\subcaption{Slow-decaying singular values, $p=1$.}
\end{minipage}
\hfill
\begin{minipage}{0.48\textwidth}
\begin{overpic}[width=\textwidth]{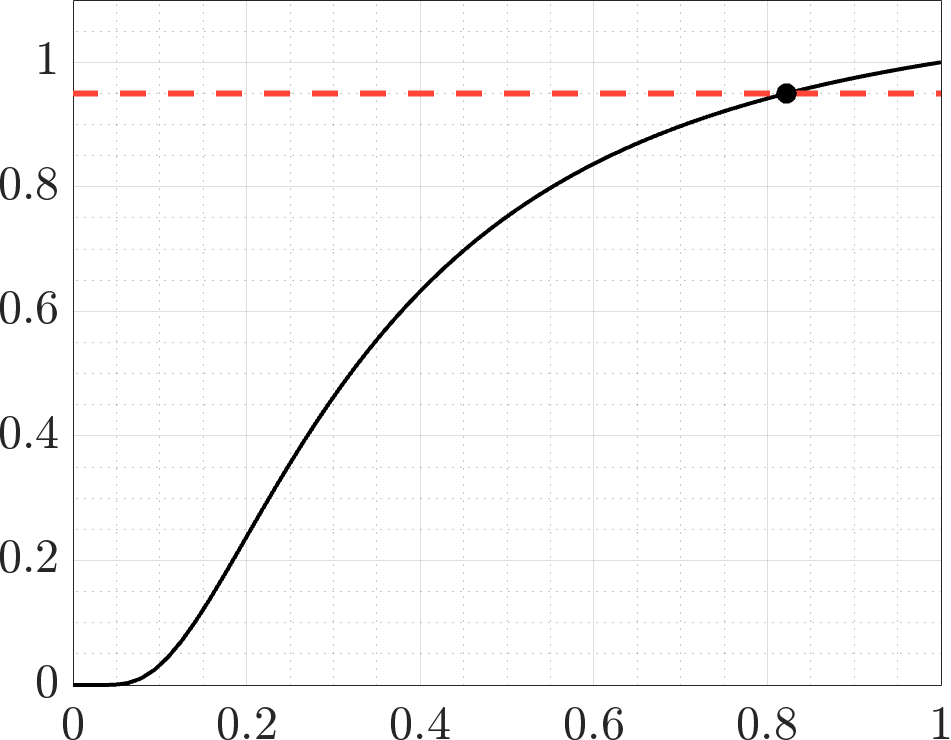}
\put(50,-5){\footnotesize $\sin\theta$}
\put(20,70){\scriptsize\color{myred}$95\%$}
\put(34,30){\rotatebox{55}{\scriptsize $k=8$}}
\put(18,16){\rotatebox{58}{\scriptsize exact CDF~\cref{eq:cdf}}}
\end{overpic}
\vspace*{0mm}
\subcaption{Fast-decaying singular values, $p=1$.}
\end{minipage}
\caption{{\em Top row}: Error estimates for the RRF subspace approximation in terms of $\sin\theta_1$, given $\delta=5\%$ tolerance for failure, are shown alongside the true RRF approximation error, performed on $100\times100$ matrices with either slow-decaying singular values ($\sigma_j=j^{-2}$) or fast-decaying singular values ($\sigma_j=2^{-j}$), for various choices of target rank $k$ and oversampling $p=1$. Empirical approximation errors (black) are the empirical 95th percentile from 20,000 RRF samples. Estimates were computed according to our~\cref{thm:Fgap-bound} (blue) and~\cite[Thm.~6]{Saibaba2019} (orange). {\em Bottom row}: The CDFs for $\sin\theta_1$ are computed for matrices with each singular value decay rate at various $k$. The 95th percentiles are marked by a horizontal dashed red line and black dots; they closely agree with the corresponding empirical observations in the top row plots. For the matrix with fast-decaying singular values, the CDF is plotted only for $k=8$ since the CDFs over all $1\le k\le12$ are very similar.}
\label{fig:bound-numerics-p1}
\end{figure}

\begin{figure}
\centering
\begin{minipage}{0.49\textwidth}
\begin{overpic}[width=\textwidth]{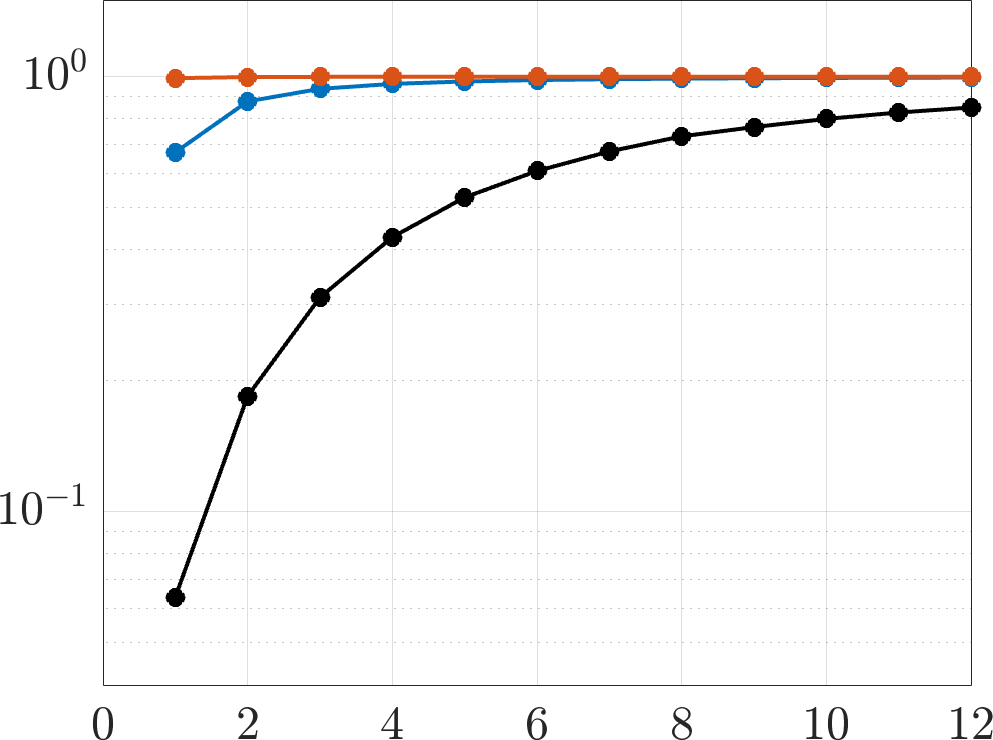}
\put(50,-5){\footnotesize $k$}
\put(20,69){\scriptsize\color{MLorange}Saibaba Thm 6}
\put(23,59){\rotatebox{10}{\scriptsize\color{MLblue}Thm 4.6}}
\put(19,34){\rotatebox{55}{\scriptsize empirical}}
\put(40,30){\parbox[l]{3cm}{\footnotesize$\sin\theta_1$ falls below black line 95\% of the time}}
\end{overpic}
\vspace*{0mm}
\end{minipage}
\hfill
\begin{minipage}{0.49\textwidth}
\begin{overpic}[width=\textwidth]{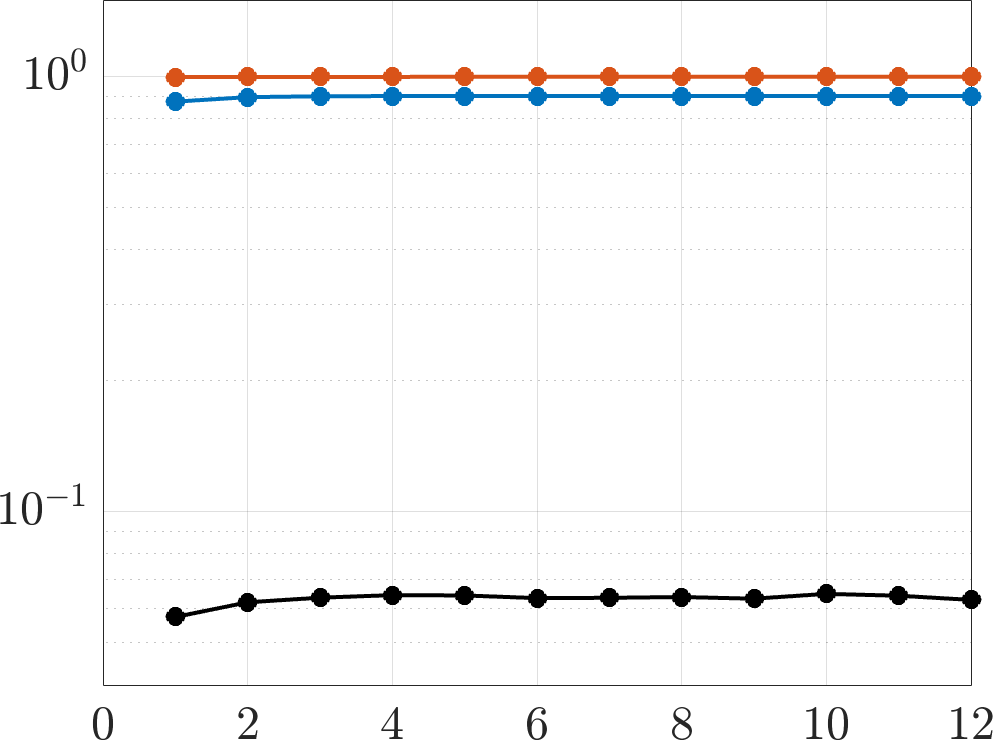}
\put(50,-5){\footnotesize $k$}
\put(20,69){\scriptsize\color{MLorange}Saibaba Thm 6}
\put(20,60){\scriptsize\color{MLblue}Thm 4.6}
\put(30,17){\scriptsize empirical}
\put(40,30){\parbox[l]{3cm}{\footnotesize$\sin\theta_1$ falls below black line 95\% of the time}}
\end{overpic}
\vspace*{0mm}
\end{minipage}
\\\vspace{1mm}
\hspace{0.7mm}
\begin{minipage}{0.475\textwidth}
\begin{overpic}[width=\textwidth]{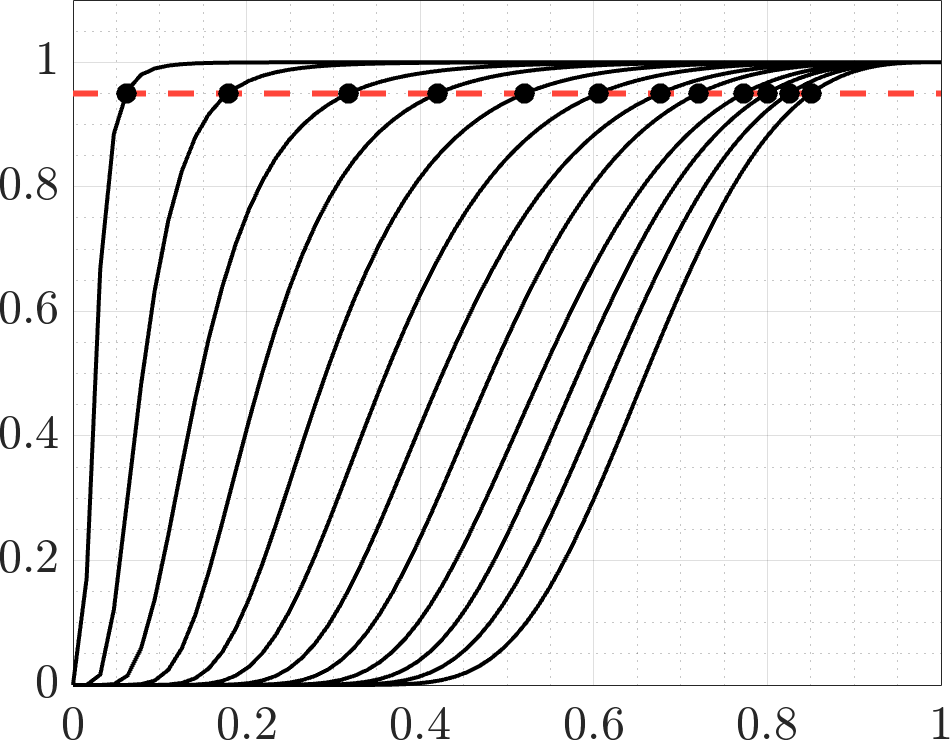}
\put(50,-5){\footnotesize $\sin\theta$}
\put(64,73){\scriptsize exact CDFs~\cref{eq:cdf}}
\put(89,63.5){\scriptsize\color{myred}$95\%$}
\put(11.5,50){\rotatebox{85}{\scriptsize $k=1$}}
\put(17.5,48){\rotatebox{80}{\scriptsize $k=2$}}
\put(23.8,45){\rotatebox{73}{\scriptsize $k=3$}}
\put(29,40){\rotatebox{70}{\scriptsize $k=4$}}
\put(34.5,36){\rotatebox{69}{\scriptsize $k=5$}}
\put(63,22){\rotatebox{68}{\scriptsize $k=12$}}
\end{overpic}
\vspace*{0mm}
\subcaption{Slow-decaying singular values, $p=5$.}
\end{minipage}
\hfill
\begin{minipage}{0.475\textwidth}
\begin{overpic}[width=\textwidth]{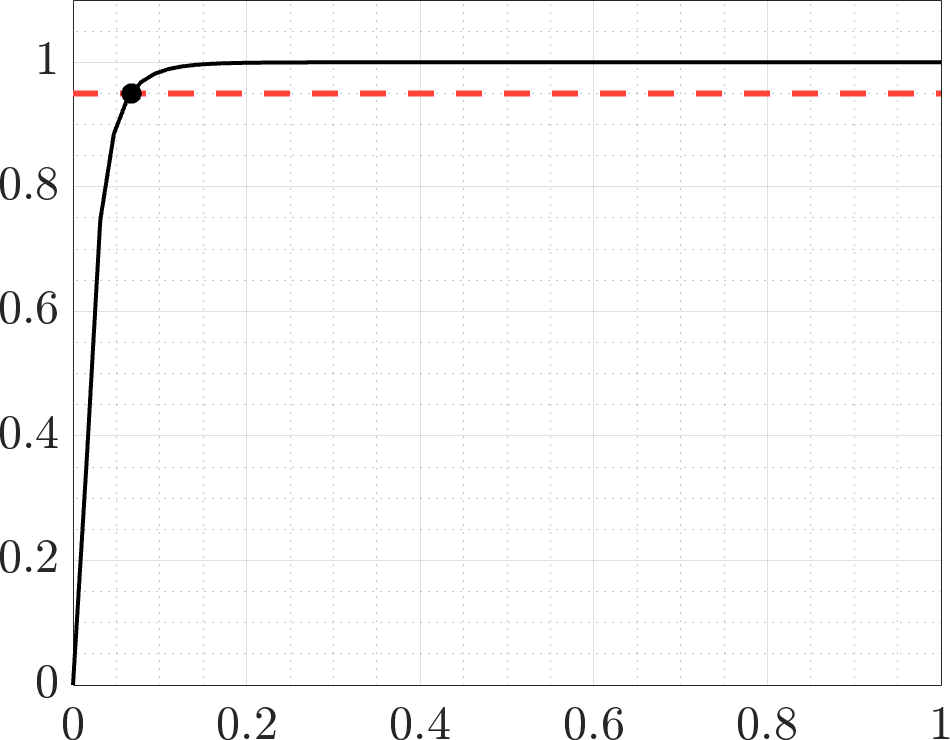}
\put(50,-5){\footnotesize $\sin\theta$}
\put(64,73){\scriptsize exact CDF~\cref{eq:cdf}}
\put(89,63.5){\scriptsize\color{myred}$95\%$}
\put(11,40){\rotatebox{88}{\scriptsize $k=8$}}
\end{overpic}
\vspace*{0mm}
\subcaption{Slow-decaying singular values, $p=5$.}
\end{minipage}
\caption{Analogous plots as in~\cref{fig:bound-numerics-p1} are shown,  but with oversampling $p=5$.}
\label{fig:bound-numerics-p5}
\end{figure}

\subsection{Numerical experiments}\label{ss:bound-numerics}
Here, we present numerical experiments for our error estimates in~\cref{thm:Fgap-bound,thm:gap-bound}, in comparison with~\cite{Saibaba2019}.

\Cref{fig:bound-numerics-p1,fig:bound-numerics-p5} show error estimates with respect to a 95\% guarantee of success for $\sin\theta_1$, where $\theta_1$ is the largest principal angle between the true dominant left singular subspace of a matrix $A\in\R^{100\times100}$ and the RRF approximation, for various choices of target rank $k$, oversampling $p$, and rates of singular value decay. We compare the estimates of~\cref{thm:Fgap-bound,thm:gap-bound} with that of~\cite[Thm.~6]{Saibaba2019}. 

We make the following observations:
\begin{enumerate}[noitemsep,leftmargin=*]
\item In none of the four cases (slow or fast singular value decay, and $p=1$ or 5) does the estimate of Saibaba~\cite{Saibaba2019} give meaningful bounds, due to the large singular value ratios. Our estimate from~\cref{thm:Fgap-bound} gives an excellent bound when $p=1$; it is considerably weaker for $p=5$, although it remains tighter than that of~\cite{Saibaba2019}.
\item When the singular values are known, one can numerically compute the exact bound on $\sin\theta_1$ that guarantees a $1-\delta$ probability of success by inverting the corresponding CDF from~\cref{eq:cdf}.
\item Asymptotic estimates, like those of~\cite{Dong2024}, fail to provide true upper bounds on the approximation error in practice because they require a somewhat arbitrary choice for the constants appearing in the asymptotic terms.  Conversely, our estimates are explicit upper bounds and provide guarantees for all choices of $k$, $p$, and $\delta$.
\end{enumerate}
In summary, our bounds provide rigorous and more meaningful guarantees for the RRF approximation error that are applicable for any choice of target rank $k$, oversampling $p$, and failure tolerance $\delta$, even in the absence of large singular value gaps.

\section{Discussion}\label{s:discussion}
In this section, we discuss a conjecture for improved constants in our error estimates, an alternative method for estimating upper bounds, and extensions to iterative algorithms like randomized subspace iteration.

\subsection{Improved constants with the Frobenius singular value ratio}
We believe that the constant $C_{N,k,\delta}$ in our estimate in~\cref{thm:Fgap-bound} can be replaced by the improved constant $C_{N,k,p,\delta}$ of~\cref{thm:gap-bound}. Indeed, the latter constant features a factor of $\delta^{-1/(p+1)}$ rather than $\delta^{-1/2}$, which can be significant with larger amounts of oversampling $p$; compare our excellent bounds in~\cref{fig:bound-numerics-p1} against the much weaker bounds in~\cref{fig:bound-numerics-p5}. This discrepancy is especially notable for the single-pass RRF, in which no subspace iterations are employed.

A heuristic argument for why we believe this improvement holds is suggested by the proof of~\cref{thm:Fgap-bound}. In essence, the proof reduces the situation with general $p\ge0$ to the case of $p=0,1$, where one then uses the quasiconvexity of the beta type II density function~\cref{eq:beta-ii}. For general $p>1$, the corresponding beta type II density is no longer quasiconvex, but when the size of the matrix $N$ is much larger than the oversampling parameter $p$,  which is true in most practical scenarios,  the density~\cref{eq:beta-ii} is {\em almost} quasiconvex. If one can appropriately quantify the notion of ``almost quasiconvex,'' then the proof should be adaptable to the $p>1$ case with the improved constant $C_{N,k,p,\delta}$. We state our conjecture formally as follows; observe that~\cref{thm:Fgap-bound} is exactly the conjecture when $p=0,1$.

\begin{conjecture}\label{conj:Fgap}
Assume the hypotheses of~\cref{thm:cdf}. Then with probability $\ge1-\delta$,
\begin{equation}\label{eq:Fgap-bound-conj}
\sin\theta_1 \le \frac{\xi_k C_{N,k,p,\delta}}{\sqrt{1+\xi_k^2C_{N,k,p,\delta}^2}},
\end{equation}
where $C_{N,k,p,\delta}$ is the constant in~\cref{cor:scalar-bd} and $\xi_k$ is the Frobenius singular value ratio from~\cref{eq:Fgap}.
\end{conjecture}
We compare our conjectured bound in~\cref{eq:Fgap-bound-conj} with the bound of~\cref{thm:Fgap-bound}, as well as another approach described in~\cref{ss:cdf-estimate} (see~\cref{fig:bound-numerics-conj}).

\subsection{CDF-based bounds from estimated singular values}\label{ss:cdf-estimate}
Often in practice, the singular values of the matrix $A$ are unknown, which makes directly computing the CDF~\cref{eq:cdf} impossible. However, one can still obtain approximate bounds using a method suggested by~\cite{Dong2024}, in which one estimates the RRF error via the exact formula~\cref{eq:cdf} for the CDF\ of the error, albeit using approximate singular values. Namely, we first approximate the leading singular values by $\hat\sigma_1,\dots,\hat\sigma_{k+p}$, obtained by performing the randomized SVD (RSVD) algorithm of~\cite{Halko2011}; then, we pad the remaining singular values as $\hat\sigma_j=\hat\sigma_{k+p}$ for $k+p+1\le j\le N$. Lastly, we use these approximate singular values in~\cref{eq:cdf} to compute the 95th percentile. One can also incorporate power iterations to improve the singular value accuracy. % Of course, if the true singular values $\sigma_1,\dots,\sigma_N$ were known in advance, then we would recover exact probabilistic guarantees for the RRF approximation error. 

We observe in~\cref{fig:bound-numerics-conj} that such a CDF-based method, in which one combines the formula~\cref{eq:cdf} with singular values approximated via the RSVD, can yield practical bounds that comparable or even better than the conjectured bound~\cref{eq:Fgap-bound-conj}.

\begin{figure}
\centering
\begin{minipage}{0.49\textwidth}
\begin{overpic}[width=\textwidth]{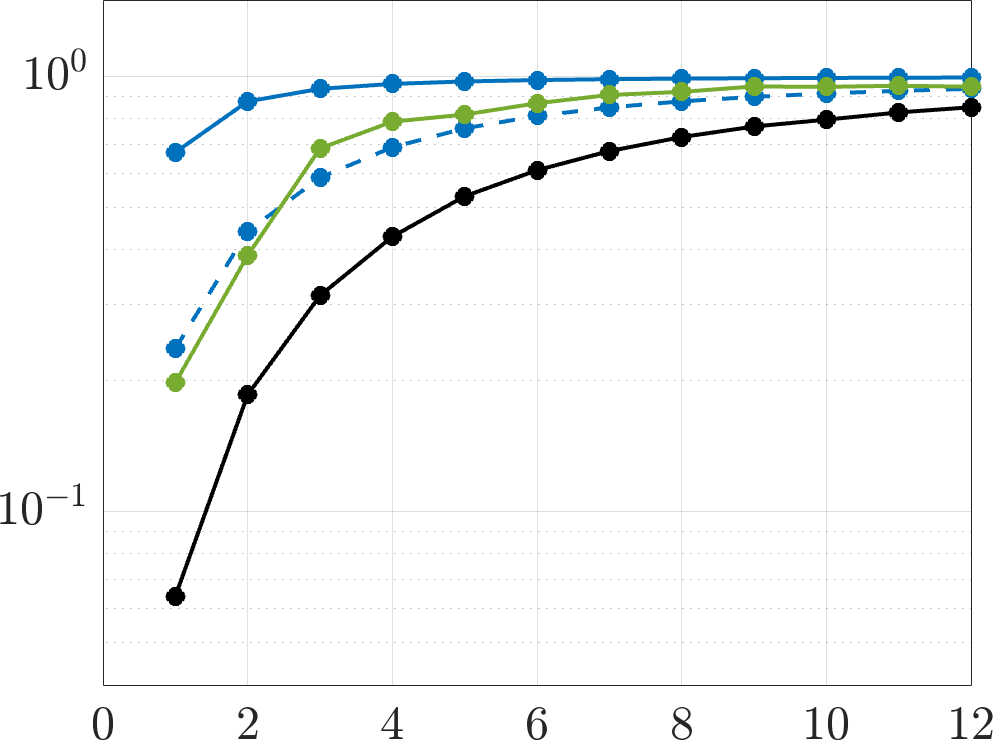}
\put(50,-5){\footnotesize $k$}
\put(40,68.5){\scriptsize\color{MLblue}Thm 4.6}
\put(15,42){\rotatebox{60}{\scriptsize\color{MLblue}Conj 5.1}}
\put(18.5,33){\rotatebox{60}{\scriptsize\color{MLgreen} CDF-based}}
\put(24.5,30){\rotatebox{52}{\scriptsize empirical}}
\put(40,23){\parbox[l]{3cm}{\footnotesize$\sin\theta_1$ falls below black line 95\% of the time}}
\end{overpic}
\vspace*{0mm}
\subcaption{Slow-decaying singular values, $p=5$.}
\end{minipage}
\hfill
\begin{minipage}{0.49\textwidth}
\begin{overpic}[width=\textwidth]{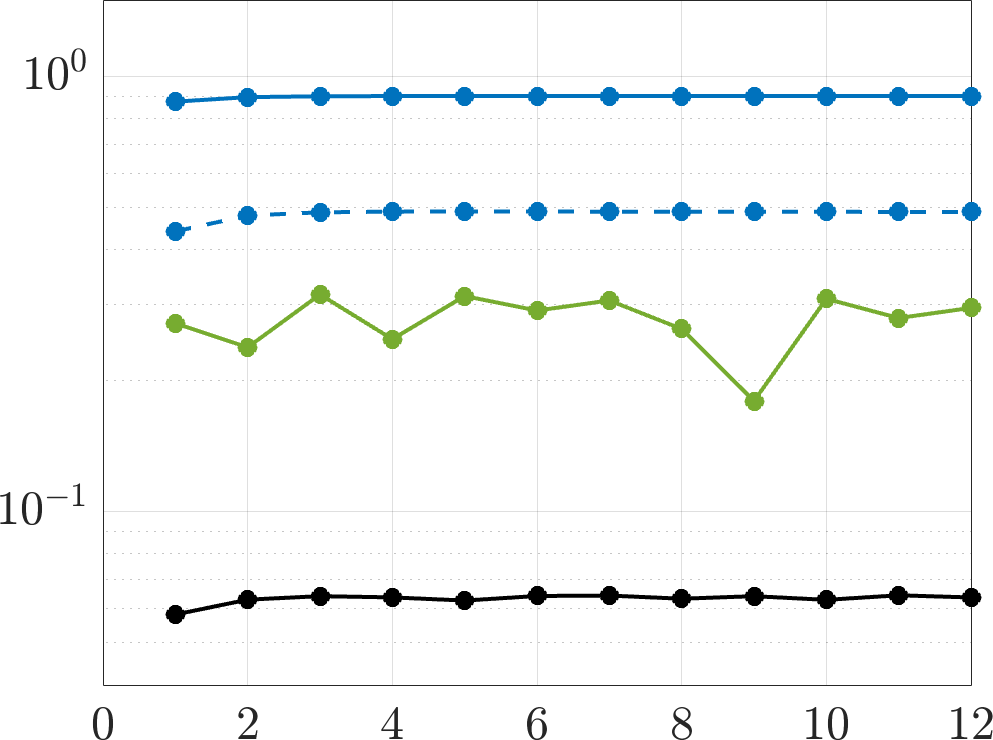}
\put(50,-5){\footnotesize $k$}
\put(30,67){\scriptsize\color{MLblue}Thm 4.6}
\put(30,55.5){\scriptsize\color{MLblue}Conj 5.1}
\put(47,46.5){\scriptsize\color{MLgreen} CDF-based}
\put(30,9.5){\scriptsize empirical}
\put(40,23){\parbox[l]{3cm}{\footnotesize$\sin\theta_1$ falls below black line 95\% of the time}}
\end{overpic}
\vspace*{0mm}
\subcaption{Slow-decaying singular values, $p=5$.}
\end{minipage}
\caption{Analogous plots as in Figure \ref{fig:bound-numerics-p1} are shown, but with oversampling $p=5$ and different estimates. Estimates were computed according to our Theorem~\ref{thm:Fgap-bound} (solid blue), Conjecture~\ref{conj:Fgap} (dashed blue), and the CDF-based method using approximate singular values (see Section~\ref{ss:cdf-estimate}). Plots for $p=1$ are omitted, as Theorem~\ref{thm:Fgap-bound} and Conjecture~\ref{conj:Fgap} are equivalent for $p=0,1$.}
\label{fig:bound-numerics-conj}
\end{figure}

\subsection{Iterative algorithms}

The analysis in this work extends rather trivially to iterative versions of the RRF, namely, randomized subspace iteration (RSI) and randomized block Krylov iteration (RBKI)~\cite{Musco2015}. Mathematically, the approximate singular subspaces of RSI are equivalent to those obtained by applying the RRF,~\cref{alg:RRF}, to the matrix $(A^\top A)^qA$, where $q$ is the number of subspace iterations. The same holds for the Krylov subspace generated by RBKI: the approximate singular subspaces are equivalent to those obtained by applying the RRF to the matrix $\phi(A)$, over every polynomial $\phi$ with degree $\le 2q+1$ and only odd-degree terms, where $\phi(A)$ denotes the matrix with the same singular vectors as $A$ but with $\phi$ applied to each of $A$'s singular values. Therefore, simple analogues of~\cref{thm:cdf} and hence also of~\cref{thm:Fgap-bound,thm:gap-bound} follow immediately for RSI and RBKI. % (see Figure~\ref{fig:cdf-iterative}).

%\begin{figure}
%\begin{subfigure}{0.48\textwidth}
%\begin{overpic}[width=\textwidth]{cdf_rsi}
%\put(50,-6){\scriptsize $\theta$}
%\end{overpic}
%\vspace*{0mm}
%\subcaption{RSI: $\sigma_j(A)=j^{-1}$, $j=1,\dots,100$.\\\,}
%\end{subfigure}
%\hfill
%\begin{subfigure}{0.48\textwidth}
%\begin{overpic}[width=\textwidth]{cdf_rbki}
%\put(50,-6){\scriptsize $\theta$}
%\end{overpic}
%\vspace*{0mm}
%\subcaption{RBKI: $\sigma_1(A)\ge\dots\ge\sigma_{40}(A)$ are linearly spaced in $[0.2,1]$, and $\sigma_j(A)=0.2$ for $j>40$.}
%\end{subfigure}
%\caption{The histograms show 10,000 samples of $\theta_1$ via RSI and RBKI, respectively, applied to a fixed matrix $A$ with $N=100$, $k=7$, $p=1$, with $q=2$ iterations. The solid black curves are computed from \cref{eq:cdf} with 1,000 Monte Carlo iterations; for the RBKI example, the polynomial $\phi$ determining the curve is the degree-5 gap-amplifying Chebyshev polynomial of~\cite{Musco2015}; that the latter curve is dominated by the histogram is in agreement with the Krylov subspace theory.}
%\label{fig:cdf-iterative}
%\end{figure}

There are a number of fully gap-independent analyses of these two iterative algorithms which aim to estimate the number of RSI or RBKI iterations needed to remain within some user-prescribed error tolerance for the subspace approximation task~\cite{AllenZhu2016,AllenZhu2017,Meyer2024,Musco2015}. These estimates also tend to be overly pessimistic in terms of the number of iterations necessary since they incorporate no spectral information from the matrix at all. We believe our results can be used to produce better bounds on the number of RSI or RBKI iterations required to achieve a certain error tolerance, assuming one knows the total Frobenius weights of the leading and tail singular values.

\bibliographystyle{siam}
\bibliography{references}

@book{Bhatia1997,
  title={Matrix Analysis},
  author={R. Bhatia},
  series={Grad. Texts in Math.},
  volume={169},
  year={1997},
  address={New York},
  publisher={Springer}
}

@book{Boyd2004,
  title={Convex Optimization},
  author={Boyd, S. and Vandenberghe, L.},
  year={2004},
  address={Cambridge},
  publisher={Cambridge University Press}
}

@book{Chikuse2003,
  title={Statistics on Special Manifolds},
  author={Chikuse, Y.},
  series={Lect. Notes Stat.},
  volume={174},
  year={2003},
  address={New York},
  publisher={Springer}
}

@book{Durrett2019,
  title={Probability: Theory and Examples},
  author={Durrett, R.},
  edition={5th},
  volume={49},
  series={Camb. Ser. Stat. Probab. Math.},
  year={2019},
  publisher={Cambridge University Press},
  address={Cambridge}
}

@book{Forrester2010,
  title={Log-Gases and Random Matrices},
  author={Forrester, P. J.},
  year={2010},
  series={London Math. Soc. Monogr.},
  volume={34},
  address={Princeton},
  publisher={Princeton University Press}
}

@book{Golub2013,
  title={Matrix Computations},
  author={Golub, G. H. and {Van Loan}, C. F.},
  year={2013},
  edition={4th},
  address={Baltimore},
  publisher={Johns Hopkins University Press}
}

@book{Gupta2000,
  title={Matrix Variate Distributions},
  author={Gupta, A. K. and Nagar, D. K.},
  series={Monogr. Surveys Pure Appl. Math.},
  volume={104},
  year={2000},
  address={Boca Raton, FL},
  publisher={Chapman \& Hall/CRC}
}

@book{Muirhead1982,
  title={Aspects of Multivariate Statistical Theory},
  author={R. J. Muirhead},
  year={1982},
  address={Hoboken, NJ},
  publisher={John Wiley \& Sons}
}

@book{Shaked2007,
  title={Stochastic Orders},
  author={Shaked, M. and Shanthikumar, J. G.},
  year={2007},
  address={New York},
  publisher={Springer}
}

@inproceedings{AllenZhu2016,
  title={{LazySVD}: Even faster {SVD} decomposition yet without agonizing pain},
  author={Allen-Zhu, Z. and Li, Y.},
  booktitle={Advances in Neural Information Processing Systems},
  volume={29},
  year={2016},
  address={Barcelona}
}

@inproceedings{AllenZhu2017,
  title={First efficient convergence for streaming {$k$-PCA}: A global, gap-free, and near-optimal rate},
  author={Allen-Zhu, Z. and Li, Y.},
  booktitle={IEEE 58th Annual Symposium on Foundations of Computer Science},
  pages={487--492},
  year={2017},
  address={Berkeley}
}

@article{Absil2006,
  title={On the largest principal angle between random subspaces},
  author={Absil, P.-A. and Edelman, A. and Koev, P.},
  journal={Linear Algebra Appl.},
  volume={414},
  pages={288--294},
  year={2006}
}

@article{Alimisis2024,
  title={Geodesic Convexity of the Symmetric Eigenvalue Problem and Convergence of {Ri}emannian Steepest Descent},
  author={F. Alimisis and B. Vandereycken},
  journal={J. Optim. Theory Appl.},
  volume={203},
  pages={920--959},
  year={2024}
}

@article{Armstrong2025,
  title={Structure-Aware Analyses and Algorithms for Interpolative Decompositions},
  author={R. Armstrong and A. Buzali and A. Damle},
  journal={SIAM J. Sci. Comput.},
  volume={47},
  number={3},
  pages={A1527--A1554},
  year={2025}
}

@article{Avron2010,
  title={Blendenpik: Supercharging {LAPACK}'s least-squares solver},
  author={Avron, H. and Maymounkov, P. and Toledo, S.},
  journal={SIAM J. Sci. Comput.},
  volume={32},
  number={3},
  pages={1217--1236},
  year={2010},
  publisher={SIAM}
}

@article{Baker1997,
  title={The {Calogero--Sutherland} model and generalized classical polynomials},
  author={Baker, T. H. and Forrester, P. J.},
  journal={Comm. Math. Phys.},
  volume={188},
  pages={175--216},
  year={1997}
}

@inproceedings{Balcan2016,
  title={An improved gap-dependency analysis of the noisy power method},
  author={Balcan, M.-F. and Du, S. S. and Wang, Y. and Yu, A. W.},
  booktitle={29th Annual Conference on Learning Theory},
  pages={284--309},
  volume={49},
  year={2016},
  address={New York},
  organization={PMLR}
}

@article{Beyn2012,
  title={An integral method for solving nonlinear eigenvalue problems},
  author={Beyn, W.-J.},
  journal={Linear Algebra Appl.},
  volume={436},
  number={10},
  pages={3839--3863},
  year={2012},
  publisher={Elsevier}
}

@inproceedings{Boutsidis2015,
  title={Spectral clustering via the power method - provably},
  author={Boutsidis, C. and Kambadur, P. and Gittens, A.},
  booktitle={Proceedings of the 32nd International Conference on Machine Learning},
  pages={40--48},
  year={2015},
  volume = {37},
  address = {Lille, France}
}

@article{DasGupta1976,
  title={A generalization of {A}nderson's theorem on unimodal functions},
  author={{Das Gupta}, S.},
  journal={Proc. Amer. Math. Soc.},
  volume={60},
  number={1},
  pages={85--91},
  year={1976}
}

@article{Davis1970,
  title={The rotation of eigenvectors by a perturbation {III}},
  author={C. Davis and W. M. Kahan},
  journal={SIAM J. Numer. Anal.},
  volume={7},
  pages={1--46},
  year={1970}
}

@article{DiazGarcia2005,
  title={Singular random matrix decompositions: {J}acobians},
  author={D{\'i}az-Garc{\'i}a, J. and Gonz{\'a}lez-Far{\'i}as, G.},
  journal={J. Multivariate Anal.},
  volume={93},
  pages={296--312},
  year={2005}
}

@article{DiazGarcia2007,
  title={A note about measures and {J}acobians of singular random matrices},
  author={D{\'i}az-Garc{\'i}a, J.},
  journal={J. Multivariate Anal.},
  volume={98},
  pages={960--969},
  year={2007}
}

@article{DiazGarcia2009,
  title={Jacobians of certain transformations of singular matrices},
  author={D{\'i}az-Garc{\'i}a, J. and Guti{\'e}rrez-J{\'a}imez, R.},
  journal={Appl. Math. (Warsaw)},
  volume={36},
  number={2},
  pages={241--249},
  year={2009}
}

@article{Dong2024,
  title={Efficient bounds and estimates for canonical angles in randomized subspace approximations},
  author={Dong, Y. and Martinsson, P.-G. and Nakatsukasa, Y.},
  journal={SIAM J. Matrix Anal. Appl.},
  volume={45},
  number={4},
  pages={1978--2006},
  year={2024}
}

@article{Drineas2012,
  title={Fast approximation of matrix coherence and statistical leverage},
  author={Drineas, P. and Magdon-Ismail, M. and Mahoney, M. W. and Woodruff, D. P.},
  journal={J. Mach. Learn. Res.},
  volume={13},
  number={1},
  pages={3475--3506},
  year={2012}
}

@article{Gross1989,
  title={Total positivity, spherical series, and hypergeometric functions of matrix argument},
  author={Gross, K. I. and Richards, D. St. P.},
  journal={J. Approx. Theory},
  volume={59},
  pages={224--246},
  year={1989}
}

@article{Halko2011,
  title={Finding structure with randomness: Probabilistic algorithms for constructing approximate matrix decompositions},
  author={Halko, N. and Martinsson, P.-G. and Tropp, J. A.},
  journal={SIAM Rev.},
  volume={53},
  number={2},
  pages={217--288},
  year={2011},
  publisher={SIAM}
}

@article{Hayakawa1966,
  title={On the distribution of a quadratic form in a multivariate normal sample},
  author={Hayakawa, T.},
  journal={Ann. Inst. Statist. Math.},
  volume={18},
  pages={191--201},
  year={1966},
  publisher={Springer}
}

@article{Horning2022,
  title={Twice is enough for dangerous eigenvalues},
  author={Horning, A. and Nakatsukasa, Y.},
  journal={SIAM J. Matrix Anal. Appl.},
  volume={43},
  number={1},
  pages={68--93},
  year={2022},
  publisher={SIAM}
}

@article{Hsu1940,
  title={An algebraic derivation of the distribution of rectangular coordinates},
  author={Hsu, P.-L.},
  journal={Proc. Edinb. Math. Soc.},
  volume={6},
  year={1940},
  pages={185--189}
}

@article{Knyazev2010,
  title={{Rayleigh--Ritz} majorization error bounds with applications to {FEM}},
  author={Knyazev, A. V. and Argentati, M. E.},
  journal={{SIAM} J. Matrix Anal. Appl.},
  volume={31},
  number={3},
  pages={1521--1537},
  year={2010}
}

@article{Koev2006,
  title={The efficient evaluation of the hypergeometric function of a matrix argument},
  author={Koev, P. and Edelman, A.},
  journal={Math. Comp.},
  volume={75},
  number={254},
  pages={833--846},
  year={2006}
}

@article{Massey2024,
  title={Admissible subspaces and the subspace iteration method},
  author={Massey, P.},
  journal={BIT},
  volume={64},
  number={12},
  year={2024}
}

@article{Massey2025,
  title={Dominant subspace and low-rank approximations from block {K}rylov subspaces without a prescribed gap},
  author={Massey, P.},
  journal={Linear Algebra Appl.},
  volume={708},
  pages={112--149},
  year={2025},
  publisher={Elsevier}
}

@inproceedings{Meyer2024,
  title={On the unreasonable effectiveness of single vector {K}rylov methods for low-rank approximation},
  author={Meyer, R. and Musco, C. and Musco, C.},
  booktitle={Proceedings of the 2024 Annual ACM--SIAM Symposium on Discrete Algorithms},
  pages={811--845},
  year={2024},
  address={Alexandria, VA},
  organization={SIAM}
}

@inproceedings{Musco2015,
  title={Randomized block {K}rylov methods for stronger and faster approximate singular value decomposition},
  author={Musco, C. and Musco, C.},
  booktitle={Advances in Neural Information Processing Systems},
  volume={28},
  year={2015},
  address={Montreal}
}

@article{Nakatsukasa2020,
  title={Sharp error bounds for {R}itz vectors and approximate singular vectors},
  author={Nakatsukasa, Y.},
  journal={Math. Comp.},
  volume={89},
  number={324},
  pages={1843--1866},
  year={2020}
}

@article{Richards2024,
  title={A reflection formula for the {G}aussian hypergeometric function of matrix argument},
  author={Richards, D. and Zheng, Q.},
  journal={J. Math. Anal. Appl.},
  volume={531},
  number={2},
  pages={127862},
  year={2024}
}

@article{Roy1987,
  title={Binomial identities and hypergeometric series},
  author={R. Roy},
  journal={Amer. Math. Monthly},
  volume={94},
  number={1},
  pages={36--46},
  year={1987}
}

@article{Saibaba2019,
  title={Randomized subspace iteration: Analysis of canonical angles and unitarily invariant norms},
  author={Saibaba, A. K.},
  journal={SIAM J. Matrix Anal. Appl.},
  volume={40},
  number={1},
  pages={23--48},
  year={2019}
}

@article{Shimizu2021,
  title={Heterogeneous hypergeometric functions with two matrix arguments and the exact distribution of the largest eigenvalue of a singular beta-{Wishart} matrix},
  author={Shimizu, K. and Hashiguchi, H.},
  journal={J. Multivariate Anal.},
  volume={183},
  pages={104714},
  year={2021}
}

@article{Uhlig1994,
  title={On singular {W}ishart and singular multivariate beta distributions},
  author={Uhlig, H.},
  journal={Ann. Stat.},
  volume={22},
  number={1},
  pages={395--405},
  year={1994}
}

@article{Wedin1972,
  title={Perturbation bounds in connection with singular value decomposition},
  author={P.-{\AA}. Wedin},
  journal={BIT},
  volume={12},
  pages={99--111},
  year={1972}
}

@article{Xu2020,
  title={A unified linear convergence analysis of {$k$-SVD}},
  author={Xu, Z. and Ke, Y. and Cao, X. and Zhou, C. and Wei, P. and Gao, X.},
  journal={Memetic Comput.},
  volume={12},
  pages={343--353},
  year={2020}
}

@misc{NIST,
  key={NIST DLMF},
  title={{\it NIST Digital Library of Mathematical Functions}},
  howpublished={Release 1.2.5 of 2025-12-15},
  note={{F.~W.~J.} Olver, A.~B. {Olde Daalhuis}, D.~W. Lozier, B.~I. Schneider, R.~F. Boisvert, C.~W. Clark, B.~R. Miller, B.~V. Saunders, H.~S. Cohl, and M.~A. McClain, eds.}
}

@article{Woolfe2008,
  title={A fast randomized algorithm for the approximation of matrices},
  author={F. Woolfe and E. Liberty and V. Rokhlin and M. Tygert},
  journal={Appl. Comput. Harmon. Anal.},
  year={2008},
  volume={25},
  pages={335--366}
}

@article{Martinsson2011,
  title={A randomized algorithm for the decomposition of matrices},
  author={P.-G. Martinsson and V. Rokhlin and M. Tygert},
  journal={Appl. Comput. Harmon. Anal.},
  year={2011},
  pages={47--68},
  volume={30}
}

@article{Tang2014,
  title={{FEAST} as a subspace iteration eigensolver accelerated by approximate spectral projection},
  author={P. T. P. Tang and E. Polizzi},
  journal={SIAM J. Matrix Anal. Appl.},
  volume={35},
  number={2},
  pages={354--390},
  year={2014},
  publisher={SIAM}
}

@article{Absil2004,
  title={Riemannian geometry of {G}rassmann manifolds with a view on algorithmic computation},
  author={Absil, P.-A. and Mahony, R. and Sepulchre, R.},
  journal={Acta Appl. Math.},
  volume={80},
  number={2},
  pages={199--220},
  year={2004},
  publisher={Springer}
}

@article{Frankl1990,
  title={Some geometric applications of the beta distribution},
  author={Frankl, Peter and Maehara, Hiroshi},
  journal={Ann. Inst. Statist. Math.},
  volume={42},
  number={3},
  pages={463--474},
  year={1990},
  publisher={Springer}
}

@article{Mulder2018,
  title={The {matrix-F} prior for estimating and testing covariance matrices},
  author={Mulder, J. and Pericchi, L. R.},
  journal={Bayesian Anal.},
  volume={13},
  number={4},
  pages={1193--1214},
  year={2018}
}

\end{document}